\numberwithin{equation}{section}
\theoremstyle{plain}
\newtheorem{theorem}{Theorem}[section]
\newtheorem{prop}[theorem]{Proposition}
\newtheorem{corollary}[theorem]{Corollary}
\newtheorem{lemma}[theorem]{Lemma}
\theoremstyle{definition}
\newtheorem{defn}[theorem]{Definition}
\newtheorem{rem}[theorem]{Remark}
\newcommand{\be}{\begin{equation}}
\newcommand{\ee}{\end{equation}}
\def\R{{\mathbb R}}
\def\N{{\mathbb N}}
\def\Z{{\mathbb Z}}
\def\C{{\mathbb C}}
\def\Rn{{{\mathbb R}^n}}
\def\Tn{{{\mathbb T}^n}}
\def\Zn{{{\mathbb Z}^n}}
\def\supp{\;{\rm supp}\;}
\def\dslash{\ {\rm d}\llap {\raisebox{.9ex}{$\scriptstyle-\!$}}}
\def\irm{{\rm i}}
\def\erm{{\ {\rm e}}}
\def\drm{{\ {\rm d}}}
\def\Scal{{\mathcal S}}
\def\Dcal{{\mathcal D}}
\def\Lcal{{\mathcal L}}
\def\Pcal{{\mathcal P}}
\def\Hcal{{\mathcal H}}
\def\Gcal{{\mathcal G}}
\def\FT{{\mathscr F}}
\def\FTT{{\FT_\Tn}}
\def\FTR{{\FT_\Rn}}
\def\va{\varphi}
\def\Lap{{\mathscr L}}
\def\p#1{{\left({#1}\right)}}
\def\b#1{{\left\{{#1}\right\}}}
\def\br#1{{\left[{#1}\right]}}
\def\jp#1{{\left\langle{#1}\right\rangle}}
\def\n#1{{\left\|{#1}\right\|}}
\def\abs#1{{\left|{#1}\right|}}
\begin{document}

\title{Quantization of pseudo-differential operators
on the torus}

\author[Michael Ruzhansky]{Michael Ruzhansky}
\address{
  Michael Ruzhansky:
  \endgraf
  Department of Mathematics
  \endgraf
  Imperial College London
  \endgraf
  180 Queen's Gate, London SW7 2AZ 
  \endgraf
  United Kingdom
  \endgraf
  {\it E-mail address} {\rm m.ruzhansky@imperial.ac.uk}
  }
  
\author[Ville Turunen]{Ville Turunen}
\address{
  Ville Turunen:
  \endgraf
   Helsinki University of Technology
  \endgraf
  Institute of Mathematics
  \endgraf
   P.O. Box 1100
  \endgraf
   FI-02015 HUT
  \endgraf
  Finland
  \endgraf
  {\it E-mail address} {\rm ville.turunen@hut.fi}
  }

\thanks{The first
 author was supported in part by the JSPS Invitational
 Research Fellowship.}
\date{\today}

\subjclass{Primary 58J40; Secondary 35S05, 35S30, 42B05}
\keywords{Pseudo-differential operators, torus, Fourier series,
microlocal analysis, Fourier integral operators}

\begin{abstract}
Pseudo-differential and Fourier series operators on the torus
$\Tn=(\Bbb R/2\pi\Bbb Z)^n$ are analyzed by using
global representations by Fourier series
instead of local representations in coordinate charts.
Toroidal symbols are investigated and 
the correspondence between toroidal and Euclidean symbols
of pseudo-differential operators is established. Periodization
of operators and hyperbolic partial differential equations
is discussed. Fourier series operators, which are analogues
of Fourier integral operators on the torus, are introduced,
and formulae for their compositions with pseudo-differential 
operators are derived. It is shown that
pseudo-differential and 
Fourier series operators are bounded on $L^2$ under
certain conditions on their phases and amplitudes.
\end{abstract}

\maketitle

\tableofcontents

\section{Introduction}

In this paper we investigate a global quantization of 
pseudo-differential operators on the torus. For this, 
we develop necessary elements of the 
toroidal microlocal analysis.
Using the toroidal Fourier transform (Fourier series) we 
can quantize the usual classes of pseudo-differential operators
yielding a full symbol of an operator on the torus.
The main difference with the (Kohn--Nirenberg)
quantization in H\"ormander's
symbol classes $S^m_{\rho,\delta}(\Tn\times\Rn)$ 
(which we will often call {\em Euclidean quantization} to
emphasize the difference) is that
the toroidal symbol belongs to the corresponding
symbol class $S^m_{\rho,\delta}(\Tn\times\Zn)$ with
frequency variable $\xi\in\Zn$ in the integer lattice;
this quantization will be called the {\em toroidal
quantization}. We will
analyze the relation between these symbol classes and between
corresponding pseudo-differential operators.
  
We will also investigate
the corresponding toroidal version of Fourier
integral operators. To distinguish them from those defined
using the Euclidean Fourier transform, we will call them
{\it Fourier series operators}. The use of 
Fourier series will allow us to obtain a global representation
of these operators, thus making trivial a number of topological
obstructions known in the standard theory of
Fourier integral operators on manifolds. We will
prepare the machinery and describe how it can be further
used in the calculus of Fourier series operators and
applications to hyperbolic partial differential equations.
In fact, the form of the required discrete calculus 
is not a-priori clear. For example, composition
formulae for pseudo-differential operators rely on the
Taylor expansion of symbols which are now defined on the
discrete lattice $\Zn$. Thus, 
we develop the corresponding versions of the periodic
and discrete analysis which are still quite
similar to formulations in the standard Euclidean theory.
In particular, this will include the analysis of
differences and some elements of the microlocal analysis
such as toroidal wave front sets, etc.
We study pseudo-differential operators in detail giving
an explicit relation between Euclidean and toroidal symbols.
Moreover, we will describe how this relation can be used
to relate Euclidean and toroidal quantizations of the
the same operators using a periodization operator that
we analyze for this purpose.

However, our analysis will reveal certain limitations for the
development of an equivalent full 
theory for Fourier series operators.
The main difference with pseudo-differential operators is in the
behaviour of the wave front set under the action of
operators. In the case of an pseudo-differential operator,
the wave front set
does not move and so we remain in the space-frequency space
$\Tn\times\Zn$. In the case of a Fourier series operator,
their integral kernel may have singularities away from the
diagonal in $\Tn\times\Tn$, which means that under the 
flow the wave front set may no longer be a transformation of
$\Tn\times\Zn$. Nevertheless, we will be able to resolve
this issue in the composition formulae by using (unique modulo
smoothing) extensions of toroidal symbols.

It was realized already in the 1970s that on the torus
one can study pseudo-differential operators globally
using Fourier series expansions,
in analogy to the Euclidean pseudo-differential calculus.
These {\it periodic pseudo-differential operators}
were treated e.g. by Agranovich \cite{Agranovich1,Agranovich2}.
Contributions have been made by many authors, e.g. by
Amosov \cite{Amosov}, Elschner \cite{Elschner}, 
McLean \cite{McLean}, Melo \cite{Me97},
Pr\"ossdorf and Schneider
\cite{ProssdorfSchneider}, Saranen and Wendland
\cite{SaranenWendland}, Turunen \cite{Turunen}, 
Turunen and Vainikko \cite{TurunenVainikko}, 
Vainikko and Lifanov \cite{VainikkoLifanov1,
VainikkoLifanov2}, and others.
However, most of these papers deal with one-dimensional cases
or with classes of operators rather than with classes
of symbols. In this paper we in particular develop the symbolic
analysis of pseudo-differential operators on the torus thus
closely relating Euclidean and toroidal quantizations
of the same classes of operators.

We note that the use of operators which are discrete in
the frequency variable allows one to weaken regularity 
assumptions on symbols with respect to $\xi$-variables.
Symbols with low regularity in $x$ have been under intensive
study for many years, e.g. see
Kumano-go and Nagase \cite{Kumanogonagase}, Sugimoto 
\cite{Sugimoto}, Boulkhemair \cite{Boulkhemair}, 
Garello and Morando \cite{GarelloMorando}, and many other
contributions.
However, in these papers one assumes symbols to be smooth
or sufficiently regular in $\xi$.
The discrete approach in this paper will allow us to reduce
or to completely remove the
regularity assumptions with respect to $\xi$. For example,
no regularity with respect to $\xi$ is assumed for $L^2$
estimates, and for elements of the calculus.
We note that a notion of pseudo-differential operators with
low regularity in $\xi$ can still be introduced, 
as was done e.g. by Sj\"ostrand \cite{Sj94}, or with 
symbols in more general modulation spaces, see e.g. 
Gr\"ochenig \cite{Gr01}. Although such
operators form an algebra, no explicit symbolic calculus of such
operators is available. In this respect the toroidal
quantization has an advantage of looking only at discrete
values of $\xi$, thus removing the regularity issue 
altogether, and yielding the symbolic calculus.

We note that the approach developed in this paper may be 
generalised to other (non-commutative) Lie groups. Thus, we will
develop global quantizations of pseudo-differential operators
on ${\mathbb S}^3$ in \cite{RT-S3} by identifying it with
group SU(2) via the quaternionic product in $\R^4$. Moreover,
in \cite{RT-book}
we will also develop global quantizations of
pseudo-differential operators on arbitrary compact
Lie groups and on certain homogeneous spaces without resorting to
local coordinate charts. We note that this analysis is rather
different from the one developed by Taylor 
in \cite{Taylor84} because of the obtained
descriptions of symbol classes and their transformations.
In any case, the relation between compact Lie groups and $\Rn$
is most transparent in the case of 
the torus and the utilisation of this
relation brings considerable simplifications in the analysis
of this paper compared to that in \cite{RT-S3} and
\cite{RT-book}, which rely heavily on the
representation theory. 
Some results proved in this paper were announced
in \cite{RT07}.

We fix the notation for the torus as
$\Tn=(\R/2\pi\Z)^n$.
Often we may identify $\Tn$ with the cube
$[0,2\pi)^n\subset\Rn$ (or $[-\pi,\pi)^n$),
where we identify the measure on the torus 
with the restriction of
the Euclidean measure on the cube.
Functions on $\Tn$ can be thought as those 
functions on $\Rn$
that are $2\pi$-periodic in each of the coordinate directions.
We will often simply say that such functions are $2\pi$-periodic
(instead of $2\pi\Zn$-periodic).

For $N\in\N$ we will
also use the notation $N\Tn=(\R/2\pi N\Z)^n$ and call
this the {\em $N$-inflated torus}, or simply an 
{\em inflated torus} if
the value of $N$ is not of importance.
One of the reasons to use large $N$ in this analysis
is that if one wants to embed a compactly supported problem in $\Rn$
into a torus, one may need to choose $N$ large enough for
technical simplifications. We will briefly discuss
these issues.

One particular application discussed in this paper will be to
hyperbolic equations on the torus. In particular, the developed
analysis can be applied if we embed certain problems in 
$\Rn$ into the torus. In general, we can observe that due to
the finite propagation speed of singularities in hyperbolic
equations we can usually cut-off the equation and the Cauchy data
for large $x$ in the local analysis of singularities of 
solutions for bounded times. Then we can embed the problem
into $\Tn$ using rescaling (or into the $N$-inflated torus
$N\Tn$ without rescaling) in order to apply the periodic
analysis developed in this paper. One of the advantages
of this procedure is that since phases and amplitudes now
are evaluated only at $\xi\in\Zn$ one can apply this also for
problems with low regularity in $\xi$. This type of
application is particularly important for weakly hyperbolic
equations or systems with variable multiplicities when 
at points of multiplicities characteristics become irregular. 
If the
principal part has constant coefficients then the loss of
regularity occurs only in $\xi$, so techniques developed in
this paper can be applied. Such applications to systems
will appear elsewhere and in this paper we 
shall briefly discuss only
the case of scalar equations. 

We also note that this
approach to the representation of solutions is related to
the work of Bourgain \cite{Bo1, Bo2, Bo3} on Strichartz
estimates for solutions to Schr\"odinger equations on the 
torus $\Tn$. Bourgain used the Fourier series representation
$$
  u(t,x)=\erm^{\irm 
  t\Delta} f(x)=\sum_{\xi\in\Zn} 
  \erm^{\irm(x\cdot\xi- t|\xi|^2)} 
  \widehat{f}(\xi),
$$
which shows in addition that the solution is periodic in time.
This representation allowed him to prove, for example, the equality
$\n{u}^4_{L^4(\Bbb T\times\Tn)}=\n{u^2}^2_{L^2(\Bbb T\times\Tn)}$
leading to the corresponding Strichartz estimates and global
well-posedness results for nonlinear equations (while
dispersive estimates fail even locally in time). In Section
\ref{SEC:torus-hyperbolic} we discuss solutions to  
hyperbolic equations with variable
coefficients but leaving corresponding
nonlinear applications outside the scope of this paper.

Overall, the analysis developed in this paper lays
down a foundation for tackling a variety of global problems
on the torus with the use of the full toroidal symbols (questions
like global hypoellipticity, global solvability,
estimates for pseudo-differential operators, Fefferman--Phong
inequality, etc.), and for applying these techniques to
partial differential equations. Again, we leave further
applications outside the scope of this paper.

In Section \ref{SEC:preliminaries} we fix the notation
used throughout the paper. In Section \ref{SEC:diff-Taylor}
we develop the necessary machinery for the discrete calculus,
in particular we prove the Taylor expansion formula
on the lattice $\Zn$ giving an estimate for the remainder.
Section \ref{SEC:torus-symbols} is devoted to the notion and
main properties of toroidal symbols together with formulae
for their calculus. In Section \ref{SEC:ext-symbols}  we establish
the relation between Euclidean and toroidal symbols and in
Section \ref{SEC:periodization} the relation between operators
with Euclidean and toroidal quantizations. 
In Section \ref{SEC:torus-WF} we prove basic results about
toroidal wave front sets. Section \ref{SEC:FSO-calculus}
introduces Fourier series operators and establishes composition
formulae between Fourier series and pseudo-differential operators.
In Section \ref{SEC:torus-L2} we prove results on the
$L^2$-boundedness of pseudo-differential and Fourier series
operators. Finally, Section \ref{SEC:torus-hyperbolic}
is devoted to an application to hyperbolic equations 
showing a way how to embed such equations into the
torus by periodization.

We will use the following notation in the paper.
Triangles $\triangle$ and $\overline\triangle$
will denote the forward and backward difference 
operators, respectively. 
The Laplacian will be denoted by $\Lap$.
The Dirac delta at $x$ will be denoted by $\delta_x(y)$ and
the Kronecker delta at $\xi$ will be denoted by $\delta_{\xi,\eta}$.
We will use the standard notation for
multi-indices $\alpha\in\N_0^n$, with
$\N_0 = \{0\}\cup\N$, for example
denoting $|\alpha| := \sum_{j=1}^n |\alpha_j|$,
$\alpha!=\alpha_1 !\cdots\alpha_n !$, 
$\partial_x^\alpha=\partial_{x_1}^{\alpha_1}\cdots
\partial_{x_n}^{\alpha_n}$, etc. We will also use
the standard notation $D_{x_j}=-\irm\partial_{x_j}=
\frac{1}{\irm}\frac{\partial}{\partial x_j}$,
where $\irm=\sqrt{-1}$.
We will write $a\in C^\infty(\Tn\times\Zn)$ when function
$a(\cdot,\xi)$ is smooth on $\Tn$ for all $\xi\in\Zn$.
For symbol classes $S^m_{\rho,\delta}$, we will often write
simply $S^m$ for the class $S^m_{1,0}$. For $\xi\in\Rn$,
we will write $\jp{\xi}=(1+|\xi|^2)^{1/2}$.
To avoid keeping track of constants in integrals we will
normalise the measure by
defining $\dslash x = (2\pi)^{-n}\ {\rm d}x$.
Constants will be usually denoted by $C$ (sometimes with
subscripts), and their values may differ on different
occasions, even when appearing in subsequent estimates.

\section{Preliminaries}
\label{SEC:preliminaries}

In this section we will introduce some notation which 
will be frequently used in the sequel.
Let $\Scal(\Rn)$ denote the space of the Schwartz test functions,
and let $\Scal'(\Rn)$ be
its dual, i.e. the space of the tempered distributions.
The {\it Dirac delta comb}
$\delta_{\Zn}:{\Scal}(\Rn)\to\Bbb C$ is defined by
$$
  \langle \delta_{\Zn},\va \rangle := 
  \sum_{x\in\Zn} \va(x),
$$
and the sum here is absolutely convergent.

Let $\Scal(\Zn)$ denote the space of
{rapidly decaying functions $\Zn\to\Bbb C$}.
That is, $\va\in\Scal(\Zn)$ if for any $M<\infty$
there exists a constant $C_{\va,M}$ such that
$
  |\va(\xi)|\leq C_{\va,M} \langle\xi\rangle^{-M}
$
holds for all $\xi\in\Zn$.
The topology on $\Scal(\Zn)$
is given by the seminorms $p_k$,
where $k\in\Bbb N_0$ and
$
  p_k(\va):=\sup_{\xi\in\Zn} 
  \langle\xi\rangle^k \left|\va(\xi)\right|.
$
One can show that the continuous linear functionals
on $\Scal(\Zn)$ are of the form
$$
  \va\mapsto \langle u,\va\rangle := \sum_{\xi\in\Zn} u(\xi)\ \va(\xi),
$$
where functions $u:\Zn\to\Bbb C$
grow at most polynomially at infinity,
i.e. there exist constants $M<\infty$ and $C_{u,M}$
such that
$$
  |u(\xi)| \leq C_{u,M} \langle\xi\rangle^M
$$
holds for all $\xi\in\Zn$.
Such distributions $u:\Zn\to\Bbb C$ form the space
$\Scal'(\Zn)$.

Let $\FTR:\Scal(\Rn)\to\Scal(\Rn)$
be the Euclidean Fourier transform
defined by
$$
  \p{\FTR f}(\xi)
  := \int_{\Rn} {\rm e}^{-{\rm i}x\cdot\xi}\ f(x)
  \dslash x,
$$
where $\dslash x = (2\pi)^{-n}\ {\rm d}x$.
Mapping $\FTR:\Scal(\Rn)\to\Scal(\Rn)$ is a bijection,
and its inverse $\FT_\Rn^{-1}$ is given by 
$$
  f(x) = \int_{\Rn} 
   {\rm e}^{{\rm i}x\cdot\xi}\ \p{\FTR f}(\xi)
  \drm\xi.
$$
As it is well-known, this Fourier transform can be uniquely extended to
$\FTR: \Scal'(\Rn)\to\Scal'(\Rn)$ by duality.

Let $\FTT: C^\infty(\Tn)\to\Scal(\Zn)$
be the {toroidal Fourier transform}
defined by
\begin{equation}\label{EQ:torus-FTT}
   (\FTT f)(\xi) 
  := \int_{\Tn} {\rm e}^{-{\rm i}x\cdot\xi}\ f(x)
   \dslash x,
\end{equation}
where $\dslash x = (2\pi)^{-n}\ {\rm d}x$.
Then $\FTT$ is a bijection and its inverse 
$\FT_\Tn^{-1}:\Scal(\Zn)\to C^\infty(\Tn)$
is given by
$$
 \p{\FT_\Tn^{-1}g}(x)=\sum_{\xi\in\Zn}  
 {\rm e}^{{\rm i}x\cdot\xi}\ g(\xi),
$$
so that
$
  f(x) = \sum_{\xi\in\Zn} 
   {\rm e}^{{\rm i}x\cdot\xi}\ \p{\FTT f}(\xi).
$
By dualising $\FT^{-1}_\Tn:\Scal(\Zn)\to C^\infty(\Tn)$,
the Fourier transform is extended uniquely to mapping
$\FTT: \Dcal'(\Tn)\to \Scal'(\Zn)$
by the formula
\begin{equation}\label{EQ:torus-FTT-inv}
   \jp{\FTT u,\va} := \jp{u,\iota\circ\FT_\Tn^{-1}\va},
\end{equation}
where $u\in \Dcal'(\Tn)$, $\va\in\Scal(\Zn)$, and
$\iota$ is defined by $(\iota\circ\psi)(x)=\psi(-x)$.
One can easily check that extension \eqref{EQ:torus-FTT-inv}
when restricted to $C^\infty(\Tn)$, is compatible with the 
definition \eqref{EQ:torus-FTT}.
Here, inclusion $C^\infty(\Tn)\subset\Dcal'(\Tn)$
is interpreted by
$\langle u,\va\rangle = \int_{\Tn} u(x)\ \va(x) \dslash x$.

It can be also easily seen that the Fourier analysis on the
$N$-inflated torus $N\Tn=\p{\R/2\pi N\Z}^n$
can be obtained from the Fourier
analysis of the standard torus $\Tn$ by changes of variables.
We can identify the dual group
$\widehat{N\Tn}$ of $N\Tn$ with $\frac{1}{N}\Zn$.
On $N\Tn$, we can define the Fourier transform 
$\FT_{N\Tn}:C^\infty(N\Tn)\to \Scal(\frac{1}{N}\Zn)$
by
$$
  (\FT_{N\Tn} g)(\eta) 
  := \int_{N\Tn} g(y) \erm^{-\irm y\cdot\eta}
  \ \dslash y, \;\; \eta\in\frac{1}{N}\Zn,
$$
where $\dslash y = (2\pi)^{-n}\ {\rm d}y$. 
Let us denote $g_N(x)=g(N x)$.
Then it is easy to see that we have the relation
\begin{equation}\label{EQ:tori-inflated-FTs}
\begin{aligned}
 \FT_{N\Tn} g(\eta) & =
 N^n \p{\FT_{\Tn} g_N}(N\eta), \;\;\ \eta\in\frac1N\Zn.
\end{aligned}
\end{equation}

\section{Difference calculus and discrete Taylor's theorem}
\label{SEC:diff-Taylor}

In this section we develop the discrete calculus which will
be needed in the sequel.
In particular, we will formulate and prove 
a discrete version of the Taylor 
expansion formula on the lattice $\Zn$.

Let $\sigma:\Zn\to\Bbb C$.
Let $e_j\in\Bbb N^n$, $(e_j)_j=1$ and $(e_j)_i = 0$
if $i\not=j$.
Define the partial difference operator $\triangle_{\xi_j}$ by
$$
  \triangle_{\xi_j} \sigma(\xi) := \sigma(\xi+e_j)-\sigma(\xi),
$$
and define
$$
  \triangle_\xi^{\alpha} = \triangle_{\xi_1}^{\alpha_1}
        \cdots \triangle_{\xi_n}^{\alpha_n}
$$
for $\alpha \in\Bbb N_0^n$.
It can be easily checked that 
these difference operators commute, i.e. that
$\triangle_\xi^\alpha\triangle_\xi^\beta=
\triangle_\xi^\beta \triangle_\xi^\alpha$ for all
multi-indices $\alpha,\beta\in\N_0^n$.
Let us summarize properties of differences:

\begin{prop}\label{PROP:torus-differences}
We have
$$
  \triangle_\xi^\alpha \sigma(\xi)
  = \sum_{\beta\leq\alpha} (-1)^{|\alpha-\beta|} {\alpha\choose\beta}
  \sigma(\xi+\beta).
$$
Let $\phi,\psi:\Zn\to\Bbb C$. Then we have the Leibnitz formula
for differences:
$$
  \triangle_\xi^\alpha(\phi \psi)(\xi) =
        \sum_{\beta\leq\alpha} {\alpha\choose \beta}
        \left(\triangle_\xi^\beta \phi(\xi)\right)
        \ \p{\triangle_\xi^{\alpha-\beta} \psi}(\xi+\beta).
$$
Also, the ``summation by parts'' is given by
\begin{equation}\label{EQ:summation-by-parts}
 \sum_{\xi\in\Zn} \phi(\xi)\ (\triangle_\xi^\alpha\psi)(\xi)
        = (-1)^{|\alpha|} \sum_{\xi\in\Zn} 
        (\overline{\triangle_\xi}^\alpha\phi)(\xi)
                \ \psi(\xi),
\end{equation}
where
$(\overline{\triangle_{\xi_j}}\phi)(\xi) = \phi(\xi)-\phi(\xi-e_j)$,
with the iterative definition for higher order differences. 
In \eqref{EQ:summation-by-parts} we assume that both series are
absolutely convergent. 
\end{prop}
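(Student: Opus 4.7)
The plan is to prove all three identities by induction on $|\alpha|$, in each case reducing to the one-dimensional case via the commutativity of the partial difference operators $\triangle_{\xi_j}$ noted just before the statement.

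For the first formula, I would observe that $\triangle_{\xi_j} = S_j - I$, where $S_j$ denotes the shift $(S_j\sigma)(\xi) := \sigma(\xi+e_j)$. Since the shifts $S_1,\dots,S_n$ and the identity commute pairwise, the binomial theorem applied inside the commutative operator algebra they generate gives
$$\triangle_\xi^\alpha = \prod_{j=1}^n (S_j-I)^{\alpha_j} = \sum_{\beta\leq\alpha} (-1)^{|\alpha-\beta|}\binom{\alpha}{\beta} S^\beta,$$
where $S^\beta := S_1^{\beta_1}\cdots S_n^{\beta_n}$. Applying this operator identity to $\sigma$ and using $(S^\beta\sigma)(\xi)=\sigma(\xi+\beta)$ yields the claim.

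For the Leibniz formula, I would first verify the one-step case $\alpha = e_j$ by adding and subtracting $\phi(\xi)\psi(\xi+e_j)$:
$$\triangle_{\xi_j}(\phi\psi)(\xi) = \phi(\xi+e_j)\psi(\xi+e_j) - \phi(\xi)\psi(\xi) = \phi(\xi)(\triangle_{\xi_j}\psi)(\xi) + (\triangle_{\xi_j}\phi)(\xi)\,\psi(\xi+e_j).$$
Then I would induct on $|\alpha|$: assuming the formula holds for $\alpha$, I apply $\triangle_{\xi_k}$ to both sides. Each term $\binom{\alpha}{\beta}(\triangle_\xi^\beta\phi)(\xi)(\triangle_\xi^{\alpha-\beta}\psi)(\xi+\beta)$ splits via the one-step rule into two pieces, one where the extra $\triangle_{\xi_k}$ lands on the $\phi$-factor (producing a shift $\xi+\beta+e_k$ on $\psi$) and one where it lands on the $\psi$-factor (keeping the shift $\xi+\beta$). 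Reindexing via $\gamma=\beta$ in the first piece and $\gamma=\beta+e_k$ in the second, and using the multi-index Pascal identity $\binom{\alpha+e_k}{\gamma} = \binom{\alpha}{\gamma} + \binom{\alpha}{\gamma-e_k}$ to combine the coefficients, gives the formula for $\alpha+e_k$.

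For the summation-by-parts identity, the one-step case follows from a reindexing of absolutely convergent sums:
$$\sum_{\xi\in\Zn}\phi(\xi)(\triangle_{\xi_j}\psi)(\xi) = \sum_{\xi\in\Zn}\phi(\xi-e_j)\psi(\xi) - \sum_{\xi\in\Zn}\phi(\xi)\psi(\xi) = -\sum_{\xi\in\Zn}(\overline{\triangle_{\xi_j}}\phi)(\xi)\,\psi(\xi),$$
where the translation $\xi\mapsto\xi+e_j$ in the first sum is justified by absolute convergence. Iterating $|\alpha|$ times and using the commutativity of both $\triangle_\xi$ and $\overline{\triangle_\xi}$ produces the sign $(-1)^{|\alpha|}$ together with the composed backward difference $\overline{\triangle_\xi}^\alpha$. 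The main obstacle across all three parts is purely bookkeeping in the Leibniz induction, where the asymmetric shift by $\beta$ on the $\psi$-factor must be tracked correctly through the Pascal combination; everything else reduces to the binomial theorem inside the commutative algebra generated by the shifts, or to elementary reindexing of sums whose absolute convergence is part of the hypothesis.
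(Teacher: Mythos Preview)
Your argument is correct and is exactly the induction the paper has in mind (the paper itself omits the proof, saying only that it ``can be proved by induction''). One small bookkeeping slip: in the Leibniz induction step your reindexing is swapped---the piece where $\triangle_{\xi_k}$ hits $\phi$ should take $\gamma=\beta+e_k$ (so that the $\psi$-shift becomes $\xi+\gamma$), and the piece where it hits $\psi$ should take $\gamma=\beta$; with that correction the Pascal combination $\binom{\alpha}{\gamma}+\binom{\alpha}{\gamma-e_k}=\binom{\alpha+e_k}{\gamma}$ goes through exactly as you say.
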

Proposition \ref{PROP:torus-differences}
can be proved by induction and we omit the proof.
For $\theta\in\Zn$ and $\alpha\in\N_0^n$, we define
$\theta^{(\alpha)} = \theta_1^{(\alpha_1)}
\cdots\theta_n^{(\alpha_n)}$,
where $\theta_j^{(0)} = 1$ and
\begin{equation}\label{EQ:torus-def-torpol}
   \theta_j^{(k+1)} = \theta_j^{(k)} (\theta_j-k)
  = \theta_j (\theta_j-1)\ldots(\theta_j-k).
\end{equation} 
Then we have $
  \triangle_\theta^\gamma \theta^{(\alpha)}
        = \alpha^{(\gamma)}\ \theta^{(\alpha-\gamma)},
$
in analogy to the Euclidean case when
$\partial_\theta^\gamma \theta^\alpha= 
\alpha^{(\gamma)}\theta^{\alpha-\gamma}$.

For $b \geq 0$, let us denote
\begin{equation}\label{sumintegral}
  I_k^b := \sum_{0\leq k < b}\quad {\rm and}\quad
  I_k^{-b} := -\sum_{-b\leq k < 0}.
\end{equation}
One can think of $I_\xi^\theta\cdots$
as a discrete version of the one-dimensional integral
$\int_0^\theta\cdots {\rm d}\xi$;
in this discrete context,
the difference $\triangle_\xi$ takes the role of
the differential operator ${\rm d}/{\rm d}\xi$.

In the sequel,
we adopt the notational conventions
$$
  I_{k_1}^\theta I_{k_2}^{k_1} \cdots I_{k_\alpha}^{k_{\alpha-1}} 1 =
  \begin{cases}
    1, & {\rm if}\ \alpha = 0,\\
    I_{k_1}^\theta 1 , & {\rm if}\ \alpha = 1, \\
    I_{k_1}^\theta I_{k_2}^{k_1} 1, & {\rm if}\ \alpha = 2,
  \end{cases}
$$
and so on.

\begin{lemma}\label{l:integrals-of one}
If $\theta\in\Bbb Z$ and $\alpha\in\Bbb N_0$ then
\begin{equation}\label{EQ:integrals-of one}
  I_{k_1}^\theta I_{k_2}^{k_1} \cdots I_{k_\alpha}^{k_{\alpha-1}} 1
  = \frac{1}{\alpha!}\ \theta^{(\alpha)}.
\end{equation}
\end{lemma}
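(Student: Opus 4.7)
The plan is to prove the identity by induction on $\alpha$, with the inductive engine being the discrete antiderivative relation
$$I_k^\theta\, k^{(\alpha)} \;=\; \frac{1}{\alpha+1}\,\theta^{(\alpha+1)},$$
which is the one-variable, $\alpha$-times-iterated case distilled into a single step. The base case $\alpha=0$ reduces to $1 = \frac{1}{0!}\theta^{(0)} = 1$ by the first line of the convention for iterated $I$'s.

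For the key identity I would invoke the fundamental relation $\triangle_k\, k^{(\alpha+1)} = (\alpha+1)\, k^{(\alpha)}$, which follows from the definition \eqref{EQ:torus-def-torpol} and the product/telescoping structure of the falling factorial (noted in the paragraph right before the lemma). Then the sums become telescoping. Concretely, for $\theta\ge 0$,
$$(\alpha+1)\, I_k^\theta\, k^{(\alpha)} \;=\; \sum_{k=0}^{\theta-1}\bigl[(k+1)^{(\alpha+1)} - k^{(\alpha+1)}\bigr] \;=\; \theta^{(\alpha+1)} - 0^{(\alpha+1)} \;=\; \theta^{(\alpha+1)},$$
since $0^{(\alpha+1)}=0$ for $\alpha\ge 0$. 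For $\theta<0$, I would use the sign convention in \eqref{sumintegral}:
$$(\alpha+1)\, I_k^\theta\, k^{(\alpha)} \;=\; -\sum_{k=\theta}^{-1}\bigl[(k+1)^{(\alpha+1)} - k^{(\alpha+1)}\bigr] \;=\; -\bigl(0^{(\alpha+1)} - \theta^{(\alpha+1)}\bigr) \;=\; \theta^{(\alpha+1)},$$
so the formula holds uniformly in the sign of $\theta\in\Z$.

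The inductive step is then a one-line reduction: assuming \eqref{EQ:integrals-of one} for $\alpha$, the innermost $\alpha$ iterated sums evaluate the constant $1$ (as a function of $k_1$) to $\frac{1}{\alpha!}\,k_1^{(\alpha)}$, and a final application of the key identity gives
$$I_{k_1}^\theta\!\left[\tfrac{1}{\alpha!}\,k_1^{(\alpha)}\right] \;=\; \tfrac{1}{\alpha!}\cdot\tfrac{1}{\alpha+1}\,\theta^{(\alpha+1)} \;=\; \tfrac{1}{(\alpha+1)!}\,\theta^{(\alpha+1)}.$$

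The only subtle point — and the one I would expect to be the main (though minor) obstacle — is verifying that the telescoping argument produces the same closed form when $\theta$ is negative, given the extra minus sign built into the definition of $I_k^{-b}$; this is precisely what makes the formula work for all of $\Z$ rather than just $\N_0$, and it is what justifies treating $I_k^\theta$ as a genuine discrete analogue of $\int_0^\theta \mathrm{d}k$.
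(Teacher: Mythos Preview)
Your proof is correct and follows essentially the same approach as the paper: both argue by induction on $\alpha$, using the difference relation $\triangle_k k^{(i)} = i\,k^{(i-1)}$ together with the telescoping identity $I_k^\theta \triangle_k k^{(i)} = \theta^{(i)}$ to obtain $I_k^\theta k^{(\alpha)} = \frac{1}{\alpha+1}\theta^{(\alpha+1)}$. Your version is simply more explicit, in particular in spelling out the telescoping computation separately for $\theta\geq 0$ and $\theta<0$, which the paper's proof leaves to the reader.
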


\begin{proof}
We observe simple equalities $k^{(0)} \equiv 1$,
$\triangle_k k^{(i)} = i\ k^{(i-1)}$
and $I_k^b \triangle_k k^{(i)} = b^{(i)}$, from
which \eqref{EQ:integrals-of one} follows by induction.
\end{proof}

We note that Lemma \ref{l:integrals-of one}
can be viewed as a discrete trivial version
of the fundamental theorem of calculus:
$\int_0^\theta f'(\xi)\ {\rm d}\xi = f(\theta)-f(0)$
for smooth enough $f:\Bbb R\to\Bbb C$
corresponds to
$I_\xi^\theta \triangle_\xi f(\xi) = f(\theta)-f(0)$
for $f:\Bbb Z\to\Bbb C$.
Lemma \ref{l:integrals-of one} immediately implies its
multidimensional version:
\begin{corollary}
If $\theta\in\Zn$ and $\alpha\in\Bbb N_0^n$ then
\begin{equation}\label{counting}
  \prod_{j=1}^n I_{k(j,1)}^{\theta_j} I_{k(j,2)}^{k(j,1)}
  \cdots I_{k(j,\alpha_j)}^{k(j,\alpha_j-1)} 1
  = \frac{1}{\alpha!}\ \theta^{(\alpha)},
\end{equation}
where $\prod_{j=1}^n I_j$ means $I_1 I_2\cdots I_n$, where
$I_j := I_{k(j,1)}^{\theta_j} I_{k(j,2)}^{k(j,1)}
\cdots I_{k(j,\alpha_j)}^{k(j,\alpha_j-1)}$.
\end{corollary}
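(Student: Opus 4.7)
The plan is to reduce the multidimensional statement to $n$ independent applications of Lemma \ref{l:integrals-of one}. The key observation is that the dummy summation indices $k(j,1),\ldots,k(j,\alpha_j)$ appearing in the $j$-th factor $I_j$ are disjoint from those appearing in $I_{j'}$ for $j\ne j'$, and each $I_j$ acts only on the constant function $1$ (after the inner factors have been computed). Hence the iterated sums factor as a product of $n$ one-dimensional iterated sums.

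More precisely, I would first note that since the constant function $1$ does not depend on any of the summation variables, and since for fixed $j$ the operator $I_j$ involves only the indices $k(j,\cdot)$, one can peel the factors $I_1,\ldots,I_n$ off one at a time (effectively a discrete Fubini step). This yields
\[
   \prod_{j=1}^n I_{k(j,1)}^{\theta_j} I_{k(j,2)}^{k(j,1)}
   \cdots I_{k(j,\alpha_j)}^{k(j,\alpha_j-1)} 1
   = \prod_{j=1}^n \left( I_{k(j,1)}^{\theta_j} I_{k(j,2)}^{k(j,1)}
   \cdots I_{k(j,\alpha_j)}^{k(j,\alpha_j-1)} 1\right).
\]
Here the inner parenthesized expression for each $j$ is exactly the one-dimensional iterated sum appearing in Lemma \ref{l:integrals-of one}, applied with $\theta=\theta_j$ and $\alpha=\alpha_j$.

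Next I would invoke Lemma \ref{l:integrals-of one} to conclude that each factor equals $\frac{1}{\alpha_j!}\,\theta_j^{(\alpha_j)}$. Multiplying over $j=1,\ldots,n$ and recalling the multi-index conventions $\alpha!=\alpha_1!\cdots\alpha_n!$ and $\theta^{(\alpha)}=\theta_1^{(\alpha_1)}\cdots\theta_n^{(\alpha_n)}$, we obtain
\[
   \prod_{j=1}^n \frac{1}{\alpha_j!}\,\theta_j^{(\alpha_j)}
   = \frac{1}{\alpha!}\,\theta^{(\alpha)},
\]
which is exactly \eqref{counting}.

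There is no substantive obstacle here; the only thing to check carefully is the bookkeeping that the summation indices in different $I_j$ are genuinely independent, so the Fubini-type factorization is legitimate (in particular, $\theta_j\in\Z$ may be negative, but $I_k^{-b}$ has been defined in \eqref{sumintegral} precisely so that the one-dimensional Lemma \ref{l:integrals-of one} covers that case as well).
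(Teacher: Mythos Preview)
Your proposal is correct and matches the paper's approach exactly: the paper states that Lemma~\ref{l:integrals-of one} immediately implies its multidimensional version, and your argument (factoring the iterated sum over the independent index families $k(j,\cdot)$ and applying the one-dimensional lemma in each coordinate) is precisely how one makes this ``immediately'' explicit.
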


Now we give the multidimensional version of 
the Taylor expansion formula.

\begin{theorem}[Discrete Taylor series on $\Zn$]
\label{THM:torus-Taylor-thm}
Let $p:\Zn\to\Bbb C$. Then we can write
$$
  p(\xi+\theta) =  \sum_{|\alpha| < M}
        \frac{1}{\alpha!}\ \theta^{(\alpha)} 
        \triangle_\xi^\alpha p(\xi) + r_M(\xi,\theta),
$$
with remainder $r_M(\xi,\theta)$ satisfying
\begin{equation}\label{remainderestimate}
  \left|\triangle_\xi^\omega r_M(\xi,\theta)\right| \leq
        C_M\ \abs{\theta^{(\alpha)}} 
        \max_{|\alpha|=M,\ \nu\in Q(\theta)} 
         \left| 
        \ \triangle_\xi^{\alpha+\omega} p(\xi+\nu) \right|,
\end{equation}
where $Q(\theta) := \{\nu\in\Zn:
        \ |\nu_j|\leq |\theta_j|
        \textrm{ for all } j=1,\ldots,n \}$.
\end{theorem}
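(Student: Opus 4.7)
The plan is to first establish the one-dimensional version of the expansion by induction on $M$ using the discrete fundamental theorem of calculus noted after Lemma \ref{l:integrals-of one}, namely $I_k^\theta \triangle_k f(k) = f(\theta)-f(0)$, and then to lift the result to $\Zn$ by iterating the one-dimensional formula in each coordinate direction.

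For $n=1$ and $p:\Z\to\C$, I would prove by induction on $M$ that
\[
p(\xi+\theta) = \sum_{k<M}\frac{\theta^{(k)}}{k!}\triangle_\xi^k p(\xi) + I_{k_1}^\theta I_{k_2}^{k_1}\cdots I_{k_M}^{k_{M-1}} \triangle_\xi^M p(\xi+k_M).
\]
The base case $M=1$ is exactly the discrete fundamental theorem applied to the function $k\mapsto p(\xi+k)$. For the step from $M$ to $M+1$, I apply the same identity to $\triangle_\xi^M p$, obtaining $\triangle_\xi^M p(\xi+k_M)=\triangle_\xi^M p(\xi)+I_{k_{M+1}}^{k_M}\triangle_\xi^{M+1}p(\xi+k_{M+1})$, substitute into the inductive remainder, and use Lemma \ref{l:integrals-of one} to identify $I_{k_1}^\theta\cdots I_{k_M}^{k_{M-1}}1=\theta^{(M)}/M!$, producing the new main term.

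For general $n$, I would iterate the one-dimensional expansion through the coordinates. First apply the 1D formula of order $M$ in $\xi_1$ to the function $p(\,\cdot\,,\xi_2+\theta_2,\ldots,\xi_n+\theta_n)$; then for each resulting main-sum term with index $\alpha_1<M$, apply the 1D formula of order $M-\alpha_1$ in $\xi_2$ to the shifted function $\triangle_{\xi_1}^{\alpha_1}p(\xi_1,\,\cdot\,,\xi_3+\theta_3,\ldots)$; and so on, reducing the truncation order by $\alpha_j$ at the $j$-th step. The combined main sums assemble precisely into $\sum_{|\alpha|<M}\frac{\theta^{(\alpha)}}{\alpha!}\triangle_\xi^\alpha p(\xi)$, and $r_M(\xi,\theta)$ emerges as a finite sum of nested discrete integrals of the form $I_{k_1}^{\theta_{j_1}}\cdots I_{k_M}^{k_{M-1}}\triangle_\xi^\alpha p(\xi+\nu(k))$ with $|\alpha|=M$ and shift vector $\nu(k)\in Q(\theta)$.

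For the bound \eqref{remainderestimate} I note that $\triangle_\xi^\omega$ commutes with each $I_{k_j}^{k_{j-1}}$, since these operate in auxiliary summation variables $k_j$, so $\triangle_\xi^\omega r_M$ is obtained by simply replacing $\triangle_\xi^\alpha p$ with $\triangle_\xi^{\alpha+\omega}p$ inside the nested integrals. Estimating each summand by the maximum of $|\triangle_\xi^{\alpha+\omega}p(\xi+\nu)|$ over $|\alpha|=M$ and $\nu\in Q(\theta)$, pulling this scalar outside the nested $I$'s, and applying the Corollary to Lemma \ref{l:integrals-of one} to count that the remaining $I_{k_1}^{\theta_{j_1}}\cdots I_{k_M}^{k_{M-1}}1$ contribute an amount comparable to $|\theta^{(\alpha)}|/\alpha!$ yields the desired estimate with a constant depending only on $M$ and $n$. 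The main obstacle will be the bookkeeping in the multidimensional iteration: one must verify that the intermediate remainders collected after each of the $n$ successive coordinate expansions recombine into a single remainder whose shifts all lie in $Q(\theta)$, rather than being evaluated at mixed intermediate points such as $(k_1,\ldots,k_j,\xi_{j+1}+\theta_{j+1},\ldots,\xi_n+\theta_n)$. The telescoping structure guarantees this, but making the indexing precise is the technical heart of the proof.
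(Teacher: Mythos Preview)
Your approach is correct and is organized differently from the paper's proof. The paper runs a single induction on the order $M$ directly in $\Zn$: it posits an explicit formula
\[
  r_\alpha(\xi,\theta)=\prod_{j=1}^n I_{k(j,1)}^{\theta_j}\cdots I_{k(j,\alpha_j)}^{k(j,\alpha_j-1)}\,\triangle_\xi^\alpha p\bigl(\xi+\nu(\theta,m_\alpha,k(m_\alpha,\alpha_{m_\alpha}))\bigr),
\]
with $m_\alpha=\min\{j:\alpha_j\neq 0\}$ and $\nu(\theta,i,k)=(\theta_1,\ldots,\theta_{i-1},k,0,\ldots,0)$, and verifies by induction that $r_M=\sum_{|\alpha|=M}r_\alpha$; the step from $M$ to $M+1$ telescopes $p(\xi+\nu)-p(\xi)$ along the coordinates. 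You instead reduce first to dimension one and then iterate across coordinates, lowering the truncation order at each step. Both routes are standard; the paper's buys a single clean closed formula for each $r_\alpha$, while yours is more modular. Amusingly, the two schemes produce \emph{mirrored} shift patterns: the paper's pieces are evaluated at shifts $(\theta_1,\ldots,\theta_{m_\alpha-1},k,0,\ldots,0)$, indexed by the \emph{smallest} active coordinate, whereas your pieces carry shifts $(0,\ldots,0,k,\theta_{j+1},\ldots,\theta_n)$, indexed by the \emph{largest} active coordinate.

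One remark on your closing worry: the ``mixed intermediate points'' you flag are not an obstacle at all. A shift of the form $(0,\ldots,0,k,\theta_{j+1},\ldots,\theta_n)$ with $|k|\leq|\theta_j|$ already lies in $Q(\theta)$, since $Q(\theta)$ only demands $|\nu_i|\leq|\theta_i|$ componentwise and $|\theta_i|\leq|\theta_i|$ trivially. So no recombination is needed; the intermediate remainders go straight into the final $r_M$, and your estimate via the Corollary to Lemma~\ref{l:integrals-of one} (the number of terms in the nested absolute sum being exactly $|\theta_j^{(\alpha_j)}|/\alpha_j!$, for either sign of $\theta_j$) goes through without further work.
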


\begin{proof}
For $0\not=\alpha\in\N_0^n$,
let us denote $m_\alpha:= \min\{j:\ \alpha_j\not=0\}$.
For $\theta\in\Zn$ and $i\in\{1,\ldots,n\}$,
let us define $\nu(\theta,i,k)\in\Zn$ by
$$
  \nu(\theta,i,k) := (\theta_1,\ldots,\theta_{i-1},k,0,\ldots,0),
$$
i.e.
$$
  \nu(\theta,i,k)_j =
  \begin{cases}
    \theta_j, & {\rm if}\ 1\leq j < i, \\
    k, & {\rm if}\ j = i, \\
    0, & {\rm if}\ i < j\leq n.
  \end{cases}
$$
We claim that the remainder can be written in the form
\begin{equation}
  r_M(\xi,\theta) = \sum_{|\alpha| = M} r_\alpha(\xi,\theta),
\end{equation}
where for each $\alpha$, we have
\begin{equation}\label{remainderinduction}
  r_\alpha(\xi,\theta)
  = \prod_{j=1}^n
  I_{k(j,1)}^{\theta_j}
  I_{k(j,2)}^{k(j,1)}\cdots I_{k(j,\alpha_j)}^{k(j,\alpha_j-1)}
    \ \triangle_\xi^\alpha p(\xi+\nu(\theta,m_\alpha,
      k(m_\alpha,\alpha_{m_\alpha})));
\end{equation}
recall (\ref{sumintegral}) and (\ref{counting}).
The proof of (\ref{remainderinduction}) is by induction.
The first remainder term $r_1$ is of the claimed form, since
$$
  r_1(\xi,\theta)
  = p(\xi+\theta)-p(\xi)
  = \sum_{i=1}^n r_{e_i}(\xi,\theta),
$$
where
$$
  r_{e_i}(\xi,\theta) = I_{k}^{\theta_i} \triangle_\xi^{e_i}
  p(\xi+\nu(\theta,i,k));
$$
here $r_{e_i}$ is of the form (\ref{remainderinduction})
for $\alpha = e_i$, $m(\alpha) = i$ and $\alpha_{m_\alpha} = 1$.
So suppose that the claim $(\ref{remainderinduction})$
is true up to order $|\alpha|=M$.
Then
\begin{eqnarray*}
  r_{M+1}(\xi,\theta)
  & = & r_M(\xi,\theta) - \sum_{|\alpha| = M} \frac{1}{\alpha!}
  \ \theta^{(\alpha)}\ \triangle_\xi^\alpha p(\xi) \\
  & = & \sum_{|\alpha| = M} \left( r_\alpha(\xi,\theta) - \frac{1}{\alpha!}
  \ \theta^{(\alpha)}\ \triangle_\xi^\alpha p(\xi) \right) \\
  & = &
    \sum_{|\alpha| = M} \prod_{j=1}^n
  I_{k(j,1)}^{\theta_j}
  I_{k(j,2)}^{k(j,1)}\cdots I_{k(j,\alpha_j)}^{k(j,\alpha_j-1)}
    \\
  & & \triangle_\xi^\alpha \left[ p(\xi+\nu(\theta,m_\alpha,
      k(m_\alpha,\alpha_{m_\alpha}))) - p(\xi) \right],
\end{eqnarray*}
where we used (\ref{remainderinduction}) and
(\ref{counting}) to obtain the last equality.
Combining this with the equality
$$
  p(\xi+\nu(\theta,m_\alpha,
      k)) - p(\xi)
    = \sum_{i=1}^{m_\alpha}
 I_{\ell}^{\nu(\theta,m_\alpha,k)_i}
    \ \triangle_\xi^{v_i} p(\xi+\nu(\theta,i,\ell)),
$$
we get
\begin{eqnarray*}
  r_{M+1}(\xi,\theta)
  & = &
    \sum_{|\alpha| = M} \prod_{j=1}^n
  I_{k(j,1)}^{\theta_j}
  I_{k(j,2)}^{k(j,1)}\cdots I_{k(j,\alpha_j)}^{k(j,\alpha_j-1)}
    \sum_{i=1}^{m_\alpha}
    I_{\ell(i)}^{\nu(\theta,m_\alpha,k(m_\alpha,\alpha_{m_\alpha}))_i} \\
  & & 
  \triangle_\xi^{\alpha+e_i} p(\xi+\nu(\theta,i,\ell(i))) \\
  & = &
  \sum_{|\beta| = M+1}
  \prod_{j=1}^n
  I_{k(j,1)}^{\theta_j}
  I_{k(j,2)}^{k(j,1)}\cdots 
  I_{k(j,\beta_j)}^{k(j,\beta_j-1)}  
  \triangle_\xi^\beta p(\xi+\nu(\theta,m_\beta,
      k(m_\beta,\beta_{m_\beta})));
\end{eqnarray*}
the last step here is just simple tedious book-keeping.
Thus the induction proof of (\ref{remainderinduction}) is complete.
Finally, let us prove estimate (\ref{remainderestimate}).
By (\ref{remainderinduction}), we obtain
\begin{eqnarray*}
  & & \left| \triangle_\xi^\omega r_M(\xi,\theta) \right| = \\
  & = & \left| \sum_{|\alpha|=M} \triangle_\xi^\omega r_\alpha(\xi,\theta)
  \right|\\
  & = & \left| \sum_{|\alpha| = M}
  \prod_{j=1}^n
  I_{k(j,1)}^{\theta_j}
  I_{k(j,2)}^{k(j,1)}\cdots I_{k(j,\alpha_j)}^{k(j,\alpha_j-1)} 
  \triangle_\xi^{\alpha+\omega} p(\xi+\nu(\theta,m_\alpha,
      k(m_\alpha,\alpha_{m_\alpha}))) \right| \\
  & \leq & \sum_{|\alpha| = M}\frac{1}{\alpha!}\left|\theta^{(\alpha)}\right|
  \max_{\nu\in Q(\theta)} \left|
     \triangle_\xi^{\alpha+\omega} p(\xi+\nu) \right|,
\end{eqnarray*}
where in the last step we used (\ref{counting}).
The proof is complete.
\end{proof}

\begin{rem}
In Theorem \ref{THM:torus-Taylor-thm} we estimated the 
remainder over the discrete box $Q(\theta)$, but an estimate over
a discrete path would have been enough.
\end{rem}

\section{Pseudo-differential operators and toroidal symbols}
\label{SEC:torus-symbols}

We note that given a continuous linear
operator $A:C^\infty(\Tn)\to C^\infty(\Tn)$,
we can consider its toroidal quantization
$$
  A\va(x) = \sum_{\xi\in\Zn} \int_{\Tn} {\rm e}^{{\rm i}(x-y)\cdot\xi}
  \ \sigma_A(x,\xi)\ f(y) \dslash y,
$$
where its {\it toroidal symbol}
$\sigma_A\in C^\infty(\Tn\times\Zn)$
is uniquely defined by the formula
$$
  \sigma_A(x,\xi) = {\rm e}^{-{\rm i}x\cdot\xi} Ae_\xi(x),
$$
where $e_\xi(x) := {\rm e}^{{\rm i}x\cdot\xi}$.
We note that for pseudo-differential operators
this would be just another quantization of the same class,
see Remark \ref{REM:mclean} for a precise statement.

For $\psi,\va\in C^\infty(\Tn)$,
let ${\psi\otimes\va}\in C^\infty(\Bbb T^{2n})$ be defined by
$\p{\psi\otimes\va}(x,y):=\psi(x)\va(y)$.
If $A:C^\infty(\Tn)\to\Dcal'(\Tn)$ is a 
continuous linear operator
then one can verify that
$$
  \langle K_A,\psi\otimes\va\rangle := \langle A\va,\psi\rangle
$$
defines the periodic Schwarz distributional kernel
$K_A\in\Dcal'(\Bbb T^{2n})$
of operator $A\in{\Lcal}(C^\infty(\Tn),\Dcal'(\Tn))$;
a common informal notation is
$$
  A\va(x) = \int_{\Tn} K_A(x,y)\ \va(y) \dslash y.
$$
The {convolution kernel} $k_A\in\Dcal'(\Bbb T^{2n})$ of $A$
is related to the Schwartz kernel by
$K_A(x,y)=k_A(x,x-y)$, i.e. we have
$$
  A\va(x) = \int_{\Tn} k_A(x,x-y)\ \va(y) \dslash y
$$
in the sense of distributions.
Notice that if $k_A(x)(y)=k_A(x,y)$ then
$$
  \FTT(k_A(x))(\xi) = \sigma_A(x,\xi).
$$

Let $m\in\Bbb R$, $0\leq\delta,\rho\leq 1$.
Then the {\em toroidal symbol class}
$S^m_{\rho,\delta}(\Tn\times\Zn)$
consists of those functions
$a(x,\xi)$ which are smooth in $x$ for all $\xi\in\Zn$,
and which satisfy
\begin{equation*}
  \left|\triangle_\xi^\alpha \partial_x^\beta a(x,\xi) \right|
        \leq C_{a\alpha\beta m}
                \ \langle\xi\rangle^{m-\rho|\alpha|+\delta|\beta|}
\end{equation*}
for every $x\in\Tn$, for every $\alpha,\beta\in\N_0^n$,
and for all $\xi\in\Zn$.
The class $S^m_{1,0}(\Tn\times\Zn)$ 
will be often denoted by writing simply $S^m(\Tn\times\Zn)$.

If $a\in S^m_{\rho,\delta}(\Tn\times\Zn)$,
we denote by $a(X,D)\in{\rm Op}S^m_{\rho,\delta}
(\Tn\times\Zn)$ the corresponding toroidal
pseudo-differential operator defined by
\begin{equation}\label{EQ:torus-pseudo-def}
  a(X,D)f(x)= \sum_{\xi\in\Zn} \int_{\Tn} 
   \ {\rm e}^{{\rm i}(x-y)\cdot\xi}\ a(x,\xi)\ f(y) 
         \dslash y.
\end{equation}
Naturally, $\sigma_{a(X,D)}(x,\xi)=a(x,\xi)$.

To contrast this with Euclidean (H\"ormander's) symbol classes,
we will write $b\in S^m_{\rho,\delta}(\Rn\times\Rn)$ if
$b\in C^\infty(\Rn\times\Rn)$ and if
\begin{equation*}
  \left|\partial_\xi^\alpha \partial_x^\beta b(x,\xi) \right|
        \leq C_{b\alpha\beta m}
                \ \langle\xi\rangle^{m-\rho|\alpha|+\delta|\beta|}
\end{equation*}
for every $x\in\Rn$, for every $\alpha,\beta\in\N_0^n$,
and for all $\xi\in\Rn$. If in addition
$b(\cdot,\xi)$ is $2\pi$-periodic
for every $\xi$ we will write 
$b\in S^m_{\rho,\delta}(\Tn\times\Rn)$. The corresponding
(Euclidean) pseudo-differential operator is then given by
\begin{equation*}\label{EQ:torus-pseudo-def-Rn}
  b(X,D)f(x)= \int_\Rn \int_{\Tn} 
   \ {\rm e}^{{\rm i}(x-y)\cdot\xi}\ b(x,\xi)\ f(y) 
         \dslash y \drm\xi.
\end{equation*}

The class $S^m_{\rho,\delta}(\Tn\times\Tn\times\Zn)$ 
of {\em toroidal compound symbols} consists
of the functions $a(x,y,\xi)$ which are smooth in $x$ and $y$
for all $\xi\in\Zn$ and which satisfy
\begin{equation}
  \left|\triangle_\xi^\alpha \partial_x^\beta \partial_y^\gamma
        a(x,y,\xi) \right|
        \leq C_{a\alpha\beta\gamma m}
   \ \langle\xi\rangle^{m-\rho|\alpha|+\delta|\beta+\gamma|}
\end{equation}
for every $x,y\in\Tn$, for every 
$\alpha,\beta,\gamma\in\N_0^n$, and for all $\xi\in\Zn$.
Such a function $a$ will be also called a 
{\it toroidal amplitude} of order $m\in\Bbb R$
of type $(\rho,\delta)$.
Formally we may also define
$$
  ({\rm Op}(a)f)(x) := \sum_{\xi\in\Zn} \int_{\Tn} 
    {\rm e}^{{\rm i}(x-y)\cdot\xi}\ a(x,y,\xi)\ f(y)  \dslash y
$$
for $f\in C^\infty(\Tn)$.

\begin{rem}\label{REM:mclean}
On $\Tn$,
H\"ormander's usual $(\rho,\delta)$ class of 
pseudo-differential operators 
${\rm Op}S^m_{\rho,\delta}(\Rn\times\Rn)$
of order $m\in\Bbb R$ which are $2\pi$-periodic in $x$ 
coincides with the class 
${\rm Op}S^m_{\rho,\delta}(\Tn\times\Zn)$, i.e.
$$
{\rm Op}S^m_{\rho,\delta}(\Tn\times\Rn)=
{\rm Op}S^m_{\rho,\delta}(\Tn\times\Zn),
$$
see e.g. \cite{McLean}, but in Theorem 
\ref{THM:torus-extend-symbols} we give a precise relation 
between actual symbols. The relation between the corresponding
operators is then given in Theorem \ref{THM:torus-per-symb}. 
\end{rem}

Now we will discuss the calculus of 
pseudo-differential operators with toroidal
symbols. For this, let us fix the notation first.
We define 
\begin{equation}\label{EQ:torus-def-toryder}
D_y^{(\alpha)}
= D_{y_1}^{(\alpha_1)}\cdots D_{y_n}^{(\alpha_n)},
\end{equation} 
where
$D_{y_j}^{(0)} = I$ and
$$
  D_{y_j}^{(k+1)}
  = D_{y_j}^{(k)}\left(\frac{\partial}{{\rm i}\partial y_j} - 
    k I\right)
  = \frac{\partial}{{\rm i}\partial y_j}
  \left(\frac{\partial}{{\rm i}\partial y_j} - I\right)
  \cdots \left(\frac{\partial}{{\rm i}\partial y_j} - k I\right).
$$

\begin{theorem}[Compound symbols]
Let $0\leq\delta<\rho\leq 1$.
For every amplitude
$a\in S^m_{\rho,\delta}(\Tn\times\Tn\times\Zn)$
there exists a unique symbol
$\sigma\in S^m_{\rho,\delta}(\Tn\times\Zn)$
satisfying
${\rm Op}(a)={\rm Op}(\sigma)$,
where
\begin{equation}
  \sigma(x,\xi) \sim \sum_{\alpha\geq 0} \frac{1}{\alpha!}
        \ \triangle_\xi^\alpha
        \ D_y^{(\alpha)} a(x,y,\xi)|_{y=x}.
\end{equation}
\end{theorem}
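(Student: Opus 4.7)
The strategy is to define $\sigma$ directly from $a$ via a Fourier-inversion formula, expand it using the discrete Taylor theorem just proved, and estimate the remainder. I first dispose of uniqueness: if ${\rm Op}(\sigma_1) = {\rm Op}(\sigma_2)$ as continuous operators on $C^\infty(\Tn)$, then testing both sides on the exponential $e_\tau(y) := e^{iy\cdot\tau}$ and using $\int_\Tn e^{iy\cdot(\tau-\xi)}\dslash y = \delta_{\xi,\tau}$ yields $e^{ix\cdot\tau}\sigma_j(x,\tau) = {\rm Op}(\sigma_j)[e_\tau](x)$ for both $j=1,2$, forcing $\sigma_1 \equiv \sigma_2$.

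For existence I would set $\sigma(x,\tau) := e^{-ix\cdot\tau}\,{\rm Op}(a)[e_\tau](x)$. Unfolding the definition of ${\rm Op}(a)$ and changing the summation variable to $\eta := \xi - \tau$ produces
$$
\sigma(x,\tau) = \sum_{\eta\in\Zn}\int_\Tn e^{i(x-y)\cdot\eta}\,a(x,y,\tau+\eta)\,\dslash y.
$$
Applying Theorem \ref{THM:torus-Taylor-thm} to $\xi \mapsto a(x,y,\xi)$ at the base point $\tau$ with increment $\eta$ replaces $a(x,y,\tau+\eta)$ by $\sum_{|\alpha|<M} \frac{1}{\alpha!}\,\eta^{(\alpha)}\,\triangle_\tau^\alpha a(x,y,\tau) + r_M(x,y,\tau,\eta)$. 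The key algebraic input is the toroidal Fourier-transform identity
$$
\widehat{D_y^{(\alpha)} b}(\eta) \;=\; \eta^{(\alpha)}\,\hat b(\eta), \qquad b\in C^\infty(\Tn),\ \eta\in\Zn,
$$
which follows by iterating the elementary $\widehat{D_{y_j} b}(\eta) = \eta_j\,\hat b(\eta)$ and observing that the operator $D_y^{(\alpha)}$ defined in \eqref{EQ:torus-def-toryder} is obtained from the polynomial $\theta^{(\alpha)}$ in \eqref{EQ:torus-def-torpol} by the substitution $\theta_j \rightsquigarrow D_{y_j}$, so the two falling-factorial constructions are term-by-term compatible. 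Applying this with $b = \triangle_\tau^\alpha a(x,\cdot,\tau)$, multiplying by $e^{ix\cdot\eta}$, summing over $\eta\in\Zn$ and invoking Fourier inversion on $\Tn$, each main term collapses to $\frac{1}{\alpha!}\,D_y^{(\alpha)}\triangle_\tau^\alpha a(x,y,\tau)\big|_{y=x} = \frac{1}{\alpha!}\,\triangle_\tau^\alpha D_y^{(\alpha)} a(x,y,\tau)\big|_{y=x}$, since $\triangle_\tau$ acts on a variable disjoint from $D_y$.

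The main obstacle is to show that the remainder
$$
R_M(x,\tau) := \sum_{\eta\in\Zn}\int_\Tn e^{i(x-y)\cdot\eta}\,r_M(x,y,\tau,\eta)\,\dslash y
$$
lies in an arbitrarily low order symbol class as $M\to\infty$. I would combine two ingredients. First, the pointwise estimate \eqref{remainderestimate} of Theorem \ref{THM:torus-Taylor-thm} applied in the $\tau$ variable gains $\rho M$ orders of decay in $\jp{\tau+\nu}$ from the $M$-fold $\tau$-differences of $a\in S^m_{\rho,\delta}$. Second, repeated integrations by parts in $y$ exploiting the identity $(1-\Lap_y)^N e^{i(x-y)\cdot\eta} = \jp{\eta}^{2N} e^{i(x-y)\cdot\eta}$ produce a factor $\jp{\eta}^{-2N}$ that makes the $\eta$-sum absolutely convergent, at the cost of at most $2\delta N$ growth in $\jp{\tau}$ from the $y$-derivatives landing on $a$. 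Splitting the sum into the regime $|\eta| \lesssim \jp{\tau}^\rho$, where $\jp{\tau+\nu}\asymp\jp{\tau}$, and a tail controlled by the large-$N$ gain, and applying the same scheme to $\partial_x^\beta\triangle_\tau^\gamma R_M$, one obtains $|\partial_x^\beta\triangle_\tau^\gamma R_M(x,\tau)| \leq C\,\jp{\tau}^{m-(\rho-\delta)M+\delta|\beta|-\rho|\gamma|+\mathrm{const}}$. Because $\rho>\delta$, choosing $M$ sufficiently large shows $R_M \in S^{m-N}_{\rho,\delta}(\Tn\times\Zn)$ for any prescribed $N$, which simultaneously validates the asymptotic expansion and confirms $\sigma\in S^m_{\rho,\delta}(\Tn\times\Zn)$.
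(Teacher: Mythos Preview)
Your proposal is correct and follows essentially the same route the paper indicates: the paper omits the proof of this theorem, referring to \cite{TurunenVainikko} for the one-dimensional $(1,0)$ case and to the proof of Theorem~\ref{THM:torus-TP} for the general scheme, and your argument---define $\sigma$ via $e^{-ix\cdot\tau}{\rm Op}(a)[e_\tau]$, apply the discrete Taylor expansion of Theorem~\ref{THM:torus-Taylor-thm} in the frequency shift, collapse the main terms via the identity $\eta^{(\alpha)}\hat b(\eta)=\widehat{D_y^{(\alpha)}b}(\eta)$ and Fourier inversion, and control the remainder by $(1-\Lap_y)^N$ integration by parts---is exactly that scheme. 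The only mild deviation is your splitting $|\eta|\lesssim\jp{\tau}^\rho$ versus tail in the remainder estimate, whereas the paper's model argument in Theorem~\ref{THM:torus-TP} proceeds uniformly via Peetre's inequality; both work, and your splitting is arguably cleaner for tracking the $(\rho-\delta)M$ gain in the general $(\rho,\delta)$ setting.
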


The proof of this theorem is essentially the 
same as that of the one-dimensional case of
$S^m_{1,0}(\Bbb T\times\Z)$ analyzed in 
\cite{TurunenVainikko}.
In \cite{TurunenVainikko} also the following results
were proved for symbols in $S^m_{1,0}(\Bbb T\times\Z)$
(see also \cite{SaranenVainikko}), and below
we give their extensions to $(\rho,\delta)$ classes in
$n$ dimensions. The possibility for such
an extension comes from the discrete Taylor
expansion and estimates for remainders
in Theorem \ref{THM:torus-Taylor-thm}. We omit these lengthy
proofs and refer to e.g. Theorem \ref{THM:torus-TP}
for a more general version of this type of arguments.

\begin{theorem}[Composition]\label{THM:torus-compositions}
Let $0\leq\delta<\rho\leq 1$ and let
$\sigma_A\in S^{m_1}_{\rho,\delta}(\Tn\times\Zn)$ and
$\sigma_B\in S^{m_2}_{\rho,\delta}(\Tn\times\Zn)$.
Then $\sigma_{AB}\in S^{m_1+m_2}_{\rho,\delta}
(\Tn\times\Zn)$
has the following asymptotic expansion:
\begin{equation}\label{BAcomposition}
  \sigma_{AB}(x,\xi) \sim
  \sum_{\alpha\geq 0} \frac{1}{\alpha!}
  \left( \triangle_\xi^\alpha \sigma_A(x,\xi) \right)
  D_x^{(\alpha)} \sigma_B(x,\xi),
\end{equation}
In particular, the symbol of the commutator $[A,B]=AB-BA$
belongs to the class $S^{m_1+m_2-1}_{\rho,\delta}(\Tn\times\Zn)$.
\end{theorem}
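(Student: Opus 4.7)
My plan is to use the characterization $\sigma_{AB}(x,\xi)=\erm^{-\irm x\cdot\xi}(AB)e_\xi(x)$ and expand via Theorem \ref{THM:torus-Taylor-thm}. First, since $Be_\xi(y)=\erm^{\irm y\cdot\xi}\sigma_B(y,\xi)$ (immediate from the toroidal quantization and Kronecker orthogonality of $\erm^{\irm y\cdot\zeta}$ on $\Tn$), applying $A$ yields
\begin{equation*}
ABe_\xi(x)=\sum_{\eta\in\Zn}\erm^{\irm x\cdot\eta}\sigma_A(x,\eta)\,(\FTT\sigma_B(\cdot,\xi))(\eta-\xi).
\end{equation*}
Substituting $\zeta=\eta-\xi$ and factoring out $\erm^{\irm x\cdot\xi}$ gives the exact identity
\begin{equation*}
\sigma_{AB}(x,\xi)=\sum_{\zeta\in\Zn}\erm^{\irm x\cdot\zeta}\,\sigma_A(x,\xi+\zeta)\,(\FTT\sigma_B(\cdot,\xi))(\zeta),
\end{equation*}
the sum converging absolutely by the rapid decay in $\zeta$ of the Fourier coefficients of the smooth function $\sigma_B(\cdot,\xi)$ on $\Tn$.

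Next I apply the discrete Taylor formula to $\sigma_A(x,\cdot)$ about $\xi$,
\begin{equation*}
\sigma_A(x,\xi+\zeta)=\sum_{|\alpha|<M}\frac{1}{\alpha!}\zeta^{(\alpha)}\triangle_\xi^\alpha\sigma_A(x,\xi)+r_M(x,\xi,\zeta).
\end{equation*}
A direct computation from definition \eqref{EQ:torus-def-toryder} shows $D_x^{(\alpha)}\erm^{\irm x\cdot\zeta}=\zeta^{(\alpha)}\erm^{\irm x\cdot\zeta}$, so each main-term sum over $\zeta$ collapses by toroidal Fourier inversion to
\begin{equation*}
\sum_{\zeta\in\Zn}\erm^{\irm x\cdot\zeta}\,\zeta^{(\alpha)}\,(\FTT\sigma_B(\cdot,\xi))(\zeta)=D_x^{(\alpha)}\sigma_B(x,\xi),
\end{equation*}
yielding exactly the expansion \eqref{BAcomposition} plus a remainder
\begin{equation*}
R_M(x,\xi)=\sum_{\zeta\in\Zn}\erm^{\irm x\cdot\zeta}\,r_M(x,\xi,\zeta)\,(\FTT\sigma_B(\cdot,\xi))(\zeta).
\end{equation*}

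The principal obstacle is to verify that $R_M\in S^{m_1+m_2-N(M)}_{\rho,\delta}(\Tn\times\Zn)$ with $N(M)\to\infty$, so that the expansion is genuinely asymptotic in $S^{m_1+m_2}_{\rho,\delta}$. I would estimate $\triangle_\xi^\omega\partial_x^\gamma R_M$ by distributing the operator across the $\zeta$-dependent factors via the discrete Leibnitz rule of Proposition \ref{PROP:torus-differences}; the $\partial_x^\gamma$ derivatives cost at most $\langle\xi\rangle^{\delta|\gamma|}$ by the symbol hypothesis. Powers $\zeta^\beta$ acting against $(\FTT\sigma_B(\cdot,\xi))(\zeta)$ are converted by integration by parts on $\Tn$ into Fourier coefficients of $D_y^\beta\sigma_B(\cdot,\xi)$, giving the estimate $|\zeta^\beta(\FTT\sigma_B(\cdot,\xi))(\zeta)|\leq C_\beta\langle\xi\rangle^{m_2+\delta|\beta|}$ for any $\beta$. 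Combined with the bound \eqref{remainderestimate} on $\triangle_\xi^\omega r_M$, this reduces the question to controlling sums of the form
\begin{equation*}
\sum_{\zeta\in\Zn}|\zeta^{(\alpha)}|\langle\zeta\rangle^{-|\beta|}\max_{\nu\in Q(\zeta)}\langle\xi+\nu\rangle^{m_1-\rho(M+|\omega|)}\langle\xi\rangle^{m_2+\delta|\beta|},
\end{equation*}
which is handled by splitting into $|\zeta|\leq\langle\xi\rangle/2$ (where $\langle\xi+\nu\rangle\sim\langle\xi\rangle$) and its complement (where $\langle\xi+\nu\rangle\leq C\langle\zeta\rangle$ and the factor $\langle\zeta\rangle^{-|\beta|}$ absorbs everything once $|\beta|$ is taken large enough in terms of $M$, $n$, $m_1$, $\delta$). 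This is exactly the book-keeping the text defers to Theorem \ref{THM:torus-TP}. Finally, the commutator claim is immediate from \eqref{BAcomposition}: the $|\alpha|=0$ terms of $\sigma_{AB}$ and $\sigma_{BA}$ both equal $\sigma_A\sigma_B$ and cancel, leaving a series beginning at $|\alpha|=1$ whose leading term $\triangle_\xi\sigma_A\cdot D_x\sigma_B-\triangle_\xi\sigma_B\cdot D_x\sigma_A$ is of order $m_1+m_2-(\rho-\delta)$, giving the asserted gain in the composition.
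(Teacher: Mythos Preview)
Your proof is correct and is precisely the argument the paper has in mind: the paper omits the proof of this theorem and refers to Theorem~\ref{THM:torus-TP}, whose proof uses the same ingredients (exact composition formula, discrete Taylor expansion from Theorem~\ref{THM:torus-Taylor-thm}, and a remainder estimate via decay of Fourier coefficients together with a near/far splitting in the summation variable). Your derivation via $\sigma_{AB}(x,\xi)=\erm^{-\irm x\cdot\xi}ABe_\xi(x)$ is in fact the more direct route, since it produces the toroidal symbol immediately rather than passing through a compound symbol as in Remark~\ref{REM:torus-pseudo-TP}. One small point: your computation correctly gives the commutator order as $m_1+m_2-(\rho-\delta)$, which agrees with the stated $m_1+m_2-1$ only when $\rho-\delta=1$; the paper's ``$-1$'' should be read as ``$-(\rho-\delta)$'' in the general case.
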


\begin{theorem}[Adjoint]
Let $0\leq\delta<\rho\leq 1$.
Let $\sigma_A\in S^m_{\rho,\delta}(\Tn\times\Zn)$.
Then the symbol $\sigma_{A^\ast}\in 
S^m_{\rho,\delta}(\Tn\times\Zn)$
of the adjoint operator $A^\ast$
has the asymptotic expansion
\begin{equation}\index{asymptotic expansion!of adjoints}\label{Aast(H)n}
  \sigma_{A^\ast}(x,\xi)\sim \sum_{\alpha\geq 0} \frac{1}{\alpha!}
  \ \triangle_\xi^\alpha D_x^{(\alpha)} \overline{\sigma_A(x,\xi)}.
\end{equation}
\end{theorem}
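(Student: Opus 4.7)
The plan is to realise $A^\ast$ as a toroidal pseudo-differential operator with a compound amplitude, and then extract its genuine symbol by invoking the compound-symbol theorem stated earlier in this section.

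Starting from the defining relation $\langle Af, g\rangle_{L^2(\Tn)} = \langle f, A^\ast g\rangle_{L^2(\Tn)}$ for $f, g \in C^\infty(\Tn)$, with a complex conjugate on the second argument, I would substitute the toroidal quantization
\[
  Af(x) = \sum_{\xi\in\Zn}\int_{\Tn}\erm^{\irm(x-y)\cdot\xi}\,\sigma_A(x,\xi)\,f(y)\dslash y
\]
into $\int_{\Tn} Af(x)\overline{g(x)}\dslash x$, exchange the order of summation and integration, and absorb the complex conjugation. After relabelling $x\leftrightarrow y$ this produces
\[
  A^\ast g(x) = \sum_{\xi\in\Zn}\int_{\Tn}\erm^{\irm(x-y)\cdot\xi}\,a(x,y,\xi)\,g(y)\dslash y, \qquad a(x,y,\xi):=\overline{\sigma_A(y,\xi)},
\]
exhibiting $A^\ast$ as a toroidal operator with compound amplitude $a$. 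Since $a$ is independent of $x$ and complex conjugation preserves every seminorm defining the symbol classes, $a\in S^m_{\rho,\delta}(\Tn\times\Tn\times\Zn)$.

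Next I would apply the compound-symbol theorem of this section: for $0\leq\delta<\rho\leq 1$ it yields a unique $\sigma_{A^\ast}\in S^m_{\rho,\delta}(\Tn\times\Zn)$ with ${\rm Op}(a)={\rm Op}(\sigma_{A^\ast})$ and the asymptotic expansion
\[
  \sigma_{A^\ast}(x,\xi)\sim\sum_{\alpha\geq 0}\frac{1}{\alpha!}\triangle_\xi^\alpha D_y^{(\alpha)}a(x,y,\xi)\Big|_{y=x}=\sum_{\alpha\geq 0}\frac{1}{\alpha!}\triangle_\xi^\alpha D_x^{(\alpha)}\overline{\sigma_A(x,\xi)},
\]
which is exactly the claimed formula; note that because $a$ does not depend on its first argument, every derivative $D_y^{(\alpha)}$ acts only on $\overline{\sigma_A(y,\xi)}$ and its restriction to $y=x$ is precisely $D_x^{(\alpha)}\overline{\sigma_A(x,\xi)}$.

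The one delicate point is justifying the Fubini interchange of $\sum_{\xi\in\Zn}$ with the double integral, since $\sigma_A$ may grow as $\jp{\xi}^m$ and the oscillatory sum is only conditionally convergent. I would handle this in the standard way by integrating in $y$ first: for $f\in C^\infty(\Tn)$ the inner integral produces (a variant of) the Fourier coefficient, which lies in $\Scal(\Zn)$, so after that step the sum over $\xi\in\Zn$ converges absolutely against the polynomial growth of $\sigma_A$, and the remaining compact $x$-integration is immediate. The same argument applied to $g$ legitimises all the exchanges, after which the conclusion is simply a direct citation of the compound-symbol theorem.
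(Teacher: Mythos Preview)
Your proposal is correct and follows precisely the route the paper intends: the paper does not give a proof of this theorem at all, stating only that the results ``were proved for symbols in $S^m_{1,0}(\Bbb T\times\Z)$'' in \cite{TurunenVainikko} and that the extension to $(\rho,\delta)$ classes in $n$ dimensions follows from the discrete Taylor expansion, with the lengthy proofs omitted. Your argument --- identifying $A^\ast$ as ${\rm Op}(a)$ for the compound amplitude $a(x,y,\xi)=\overline{\sigma_A(y,\xi)}$ and then invoking the compound-symbol theorem --- is exactly the standard reduction the paper has in mind.
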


From the asymptotic expansion for composition of pseudo-differential operators,
we get an expansion for a parametrix of an elliptic operator:

\begin{theorem}[Ellipticity]
Let $\sigma_A\in S^m(\Tn\times\Zn)$
be elliptic in the sense that
there exist constants $C_0>0$ and $N_0\in\N_0$ such that
\begin{equation}\label{elliptic}
  |\sigma_A(x,\xi)|
  \geq C_0\ \langle\xi\rangle^m
\end{equation}
for all $(x,\xi)\in\Tn\times\Zn$ for which $|\xi|\geq N_0$;
this is equivalent to assuming that
there exists $\sigma_B\in S^{-m}(\Tn\times\Zn)$
such that $I-BA,I-AB$ are smoothing.
Let
$
  A \sim \sum_{j=0}^\infty A_j,
$
where $\sigma_{A_j}\in S^{m-j}(\Tn\times\Zn)$.
Then
$
  B \sim \sum_{k=0}^\infty B_k,
$
where $B_k\in S^{-m-k}(\Tn\times\Zn)$
is such that
$\sigma_{B_0}(x,\xi)= 1/\sigma_{A_0}(x,\xi)$
for large enough $|\xi|$, and recursively
$$
  \sigma_{B_N}(x,\xi) = \frac{-1}{\sigma_{A_0}(x,\xi)}
  \sum_{k=0}^{N-1} \sum_{j=0}^{N-k}
  \sum_{|\gamma|=N-j-k}
        \frac{1}{\gamma!} \left[
          \triangle_\xi^\gamma \sigma_{A_j}(x,\xi) \right]
        D_x^{(\gamma)} \sigma_{B_k}(x,\xi).
$$
\end{theorem}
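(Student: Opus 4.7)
\emph{Strategy.} I would split the proof into the characterization of ellipticity in terms of parametrices, and the explicit recursive construction. For the former, the nontrivial direction is that the existence of $B\in{\rm Op}\,S^{-m}$ with $I-BA,\,I-AB\in{\rm Op}\,S^{-\infty}$ implies ellipticity of $\sigma_A$: applying Theorem \ref{THM:torus-compositions} to $AB$ gives $\sigma_A(x,\xi)\sigma_B(x,\xi)=1$ modulo $S^{-1}(\Tn\times\Zn)$, and combined with $|\sigma_B|\leq C\langle\xi\rangle^{-m}$ this forces $|\sigma_A(x,\xi)|\geq c\langle\xi\rangle^m$ for $|\xi|$ large, which is \eqref{elliptic}. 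The converse is settled by the construction below.

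\emph{Symbol estimates for the leading inverse.} The key preliminary fact is that any $\tilde\sigma_0\in S^m(\Tn\times\Zn)$ with $|\tilde\sigma_0(x,\xi)|\geq c\langle\xi\rangle^m$ for every $(x,\xi)$ satisfies $1/\tilde\sigma_0\in S^{-m}(\Tn\times\Zn)$. The starting point is the nonlocal quotient identity
\[
\triangle_{\xi_j}\p{\tfrac{1}{\tilde\sigma_0}}(x,\xi)
 = -\frac{\triangle_{\xi_j}\tilde\sigma_0(x,\xi)}{\tilde\sigma_0(x,\xi)\,\tilde\sigma_0(x,\xi+e_j)},
\]
which I would iterate using the difference Leibniz rule of Proposition \ref{PROP:torus-differences}. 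Each additional $\triangle_{\xi_j}$ gains one factor of $\langle\xi\rangle^{-1}$ thanks to the denominator bound from ellipticity, while $\partial_x^\beta$ acts via the classical Leibniz rule; an induction on $|\alpha|+|\beta|$ then yields $|\triangle_\xi^\alpha \partial_x^\beta(1/\tilde\sigma_0)|\leq C_{\alpha\beta}\langle\xi\rangle^{-m-|\alpha|}$. Since \eqref{elliptic} is assumed only for $|\xi|\geq N_0$, I would first replace $\sigma_{A_0}$ for $|\xi|<N_0$ by a smooth-in-$x$ modification $\tilde\sigma_0$ still bounded below by $c\langle\xi\rangle^m$ in absolute value; the modification differs from $\sigma_{A_0}$ by a $\xi$-compactly supported (hence smoothing) symbol, so it does not affect the parametrix modulo $S^{-\infty}$. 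Setting $\sigma_{B_0}:=1/\tilde\sigma_0$ gives the base case.

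\emph{The recursion.} To derive the recursive formula I would impose $\sigma_{AB}\sim 1$. Inserting the expansions $\sigma_A\sim\sum_j\sigma_{A_j}$ and $\sigma_B\sim\sum_k\sigma_{B_k}$ into the composition formula \eqref{BAcomposition} gives
\[
\sigma_{AB}(x,\xi)\sim \sum_{j,k,\gamma}\frac{1}{\gamma!}\br{\triangle_\xi^\gamma\sigma_{A_j}(x,\xi)}\,D_x^{(\gamma)}\sigma_{B_k}(x,\xi),
\]
whose generic summand is of order $-j-k-|\gamma|$. The order-$0$ contribution must equal $1$, recovering $\sigma_{B_0}=1/\sigma_{A_0}$; for $N\geq 1$ the total order-$(-N)$ contribution must vanish, and isolating the term $(j,k,\gamma)=(0,N,0)$ (which equals $\sigma_{A_0}\sigma_{B_N}$) and dividing by $\sigma_{A_0}$ produces exactly the formula in the theorem. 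An induction on $N$, with inductive hypothesis $\sigma_{B_k}\in S^{-m-k}$ for $k<N$, shows each summand on the right lies in $S^{-m-N}$, whence $\sigma_{B_N}\in S^{-m-N}(\Tn\times\Zn)$.

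\emph{Final assembly and main obstacle.} To define $B$ itself, I would perform a Borel-type asymptotic summation in the toroidal setting: summing $\chi(\xi/R_k)\sigma_{B_k}(x,\xi)$ with a cut-off $\chi$ vanishing near the origin and a rapidly growing sequence $R_k\to\infty$ chosen diagonally, yielding $\sigma_B\in S^{-m}$ with $\sigma_B-\sum_{k<N}\sigma_{B_k}\in S^{-m-N}$ for every $N$. By construction the symbol of $AB-I$ has identically vanishing asymptotic expansion, so it belongs to $\bigcap_N S^{-N}=S^{-\infty}$, i.e.\ $AB-I$ is smoothing; the same construction done on the other side produces a left parametrix $B'$, and then $B'=B'(AB)-B'R_{\mathrm{right}}=(B'A)B-B'R_{\mathrm{right}}$ shows $B'-B$ is smoothing, so $B$ is also a left parametrix. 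The main technical obstacle is the symbol estimate for $1/\tilde\sigma_0$: because the discrete difference $\triangle_\xi$ obeys no Euclidean-style chain rule, the quotient identity must be iterated with Proposition \ref{PROP:torus-differences} carefully controlling the shifted factor $\tilde\sigma_0(x,\xi+e_j)$ and verifying that each $\triangle_{\xi_j}$ truly gains a full power of $\langle\xi\rangle^{-1}$.
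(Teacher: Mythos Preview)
The paper does not actually give a proof of this theorem: it is one of the results for which the authors explicitly write ``We omit these lengthy proofs,'' pointing to \cite{TurunenVainikko} for the one-dimensional $S^m_{1,0}$ case and remarking only that the statement follows ``from the asymptotic expansion for composition of pseudo-differential operators.'' Your proposal is exactly the standard argument this hint encodes --- equate $\sigma_{AB}\sim 1$ via \eqref{BAcomposition}, group by homogeneity $-j-k-|\gamma|$, and solve recursively for $\sigma_{B_N}$ --- together with the routine auxiliary steps (showing $1/\tilde\sigma_0\in S^{-m}$ by the discrete quotient/Leibniz iteration, the finite modification for $|\xi|<N_0$, Borel summation, and the left-vs-right parametrix comparison). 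So your approach is correct and is precisely the one the paper gestures at; there is nothing further to compare.
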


\section{Extension of toroidal symbols}\label{extending}
\label{SEC:ext-symbols}

It is often useful to extend toroidal symbols
from $\Tn\times\Zn$
to $\Tn\times\Rn$, ideally getting symbols in
H\"ormander's symbol classes.
The case of $n=1$ and $(\rho,\delta)=(1,0)$ was considered
in \cite{TurunenVainikko} and \cite{SaranenVainikko}.
This extension can be done with a suitable convolution
that respects the symbol inequalities.
In the following,
$\delta_{0,\xi}$ is the Kronecker delta at $0\in\Bbb Z^n$,
i.e. $\delta_{0,0}=1$, and $\delta_{0,\xi}=0$ if $\xi\not=0$.
First we prepare the following useful functions
$\theta,\phi_\alpha\in\Scal(\Rn)$:

\begin{lemma}\label{LEMMA:torus-ext-Meyer}
There exist functions $\phi_\alpha\in\Scal(\Rn)$
(for each $\alpha\in\N_0^n$) and a function $\theta\in\Scal(\Rn)$ such that
$$
  {\Pcal}\theta(x):=\sum_{k\in\Zn}\theta(x+2\pi k)\equiv 1,
$$
$(\FTR{\theta})|_{\Zn}(\xi) = \delta_{0,\xi}$ and
$\partial_\xi^\alpha(\FTR{\theta})(\xi)
= \overline{\triangle}_\xi^\alpha \phi_\alpha(\xi)$
for all $\xi\in\Zn$.
\end{lemma}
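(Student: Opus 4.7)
The plan is to split the three claims into two essentially independent tasks: first, producing $\theta\in\Scal(\Rn)$ with $(\FTR\theta)|_{\Zn}(\xi)=\delta_{0,\xi}$, and second, choosing the $\phi_\alpha$. The key preliminary observation is that, with the paper's normalisation $\dslash x=(2\pi)^{-n}\,dx$, the Fourier coefficients of the $2\pi\Zn$-periodic function $\Pcal\theta$ are exactly $(\FTR\theta)|_{\Zn}$. I would verify this by a direct change of variables: rewrite $\int_{[0,2\pi)^n} e^{-ix\cdot\xi}\,\Pcal\theta(x)\,\dslash x$ as a sum of integrals over the translated cubes $2\pi k+[0,2\pi)^n$, use $e^{2\pi i k\cdot\xi}=1$ for $\xi\in\Zn$, and recombine into a single integral over $\Rn$. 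Consequently $\Pcal\theta\equiv 1$ is equivalent to $(\FTR\theta)|_{\Zn}=\delta_{0,\xi}$, so the first two bullet points of the lemma reduce to one.

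To construct $\theta$, I would prescribe its Fourier transform. Choose a cut-off $\chi\in C_c^\infty(\Rn)$ with $\chi\equiv 1$ on a neighbourhood of the origin and $\supp\chi\subset(-1,1)^n$, and set $\theta:=\FT_\Rn^{-1}\chi$. Since $C_c^\infty(\Rn)\subset\Scal(\Rn)$ and $\Scal(\Rn)$ is closed under Fourier inversion, $\theta\in\Scal(\Rn)$. The support condition forces $\chi(\xi)=0$ at every nonzero $\xi\in\Zn$, while $\chi(0)=1$, giving $(\FTR\theta)(\xi)=\chi(\xi)=\delta_{0,\xi}$ for $\xi\in\Zn$. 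By the first paragraph, $\Pcal\theta\equiv 1$ then follows automatically.

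For the third identity, I would exploit the fact that with this choice of $\chi$ the only nontrivial case is $\alpha=0$. Indeed, for $\alpha\neq 0$ the derivative $\partial_\xi^\alpha\chi$ vanishes at every $\xi\in\Zn$: near the origin $\chi$ is constant, which kills all derivatives of order $\geq 1$ there, while every other integer point lies outside $\supp\chi$. Hence one may set $\phi_0:=\chi=\FTR\theta$ and $\phi_\alpha:=0$ for $\alpha\neq 0$; each $\phi_\alpha$ is in $\Scal(\Rn)$, and both sides of $\overline{\triangle}_\xi^\alpha\phi_\alpha(\xi)=\partial_\xi^\alpha(\FTR\theta)(\xi)$ vanish identically on $\Zn$ when $\alpha\neq 0$, while they coincide by definition when $\alpha=0$. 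The main point requiring care is the book-keeping in the Poisson summation computation so that the $(2\pi)^{-n}$ factors land correctly; once that is done, the construction is just a matter of choosing $\chi$ with $\supp\chi\cap\Zn=\{0\}$ and $\chi$ locally constant at the origin.
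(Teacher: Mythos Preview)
Your argument is correct for the lemma as literally stated, but it follows a genuinely different route from the paper and, importantly, yields a weaker conclusion than the paper's proof.

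The paper constructs $\theta$ to be \emph{compactly supported in physical space} (a $C_c^\infty$ bump on $(-2\pi,2\pi)^n$ forming a partition of unity), whereas you construct $\FTR\theta=\chi$ to be \emph{compactly supported in frequency}. Your choice makes the third identity trivial on $\Zn$: $\partial_\xi^\alpha\chi$ vanishes at every integer point, so $\phi_\alpha=0$ works for $\alpha\neq 0$. This is clean and avoids the paper's explicit construction of $\phi_\alpha$ via $(\FT_\R^{-1}\phi_\alpha)(x)=\bigl(\tfrac{-ix}{1-e^{ix}}\bigr)^\alpha\theta_1(x)$.

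The trade-off is that the paper actually proves the identity $\partial_\xi^\alpha(\FTR\theta)=\overline{\triangle}_\xi^\alpha\phi_\alpha$ for all $\xi\in\Rn$, not just for $\xi\in\Zn$ (the lemma's ``$\xi\in\Zn$'' is an imprecision). This stronger version is what is used in the proof of Theorem~\ref{THM:torus-extend-symbols}, where one needs $\partial_\xi^\alpha(\FTR\theta)(\xi-\eta)=\overline{\triangle}_\xi^\alpha\phi_\alpha(\xi-\eta)$ with $\xi\in\Rn$ arbitrary and $\eta\in\Zn$, before applying summation by parts in $\eta$. Your $\phi_\alpha=0$ does not satisfy this: $\partial_\xi^\alpha\chi$ is nonzero on $(-1,1)^n$ away from the origin. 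Moreover, with your frequency-localised $\theta$ no Schwartz $\phi_\alpha$ can satisfy the $\Rn$-identity, since $(\FT_\Rn^{-1}\phi_\alpha)(x)=\bigl(\tfrac{-ix}{1-e^{ix}}\bigr)^\alpha\theta(x)$ would inherit singularities at $x\in 2\pi\Zn\setminus\{0\}$ (your $\theta$ is real-analytic and does not vanish there). The paper's choice of a compactly supported $\theta$ with $\supp\theta\subset(-2\pi,2\pi)^n$ is precisely what kills those singularities and makes $\phi_\alpha\in\Scal(\Rn)$ possible.
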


\begin{proof}
Let us first consider the $1$-dimensional case.
Let $\theta=\theta_1\in C^\infty(\Bbb R^{1})$ such that
$$
  {\rm supp}(\theta_1) \subset (-2\pi,2\pi),\quad
  \theta_1(-x) = \theta_1(x),\quad
  \theta_1(2\pi-y) + \theta_1(y) = 1
$$
for $x\in\Bbb R$ and
for $0\leq y\leq 2\pi$;
these assumptions for $\theta_1$ are enough for us,
and of course the choice is not unique.
In any case, ${\theta_1} \in \Scal(\Bbb R^{1})$, so that
also $\FT_{\R} {\theta_1} \in \Scal(\Bbb R^{1})$.
If $\xi\in\Bbb Z^1$ then we have
\begin{eqnarray*}
  \FT_\R{\theta_1}(\xi)
   = 
  \int_{\Bbb R^{1}} \theta_1(x)\ {\rm e}^{-{\rm i}x\cdot\xi}
  \dslash x 
   = 
  \int_0^{2\pi} \left(\theta_1(x-2\pi) + \theta_1(x) \right)
  \ {\rm e}^{-{\rm i}x\cdot\xi} \dslash x  = 
  \delta_{0,\xi}.
\end{eqnarray*}
If a desired $\phi_\alpha\in\Scal(\Bbb R^1)$ exists,
it must satisfy
$$
  \int_{\Bbb R^{1}} {\rm e}^{{\rm i}x\cdot\xi}\ 
   \partial_\xi^\alpha \p{\FT_{\R}{\theta_1}}(\xi)
  \ {\rm d}\xi =
  \int_{\Bbb R^{1}} {\rm e}^{{\rm i}x\cdot\xi}
  \ \overline{\triangle}_\xi^\alpha \phi_\alpha(\xi)
  \ {\rm d}\xi =
  \p{1-\erm^{\irm x}}^\alpha  
  \int_{\Bbb R^{1}} {\rm e}^{{\rm i}x\cdot\xi}
   \phi_\alpha(\xi)  \ {\rm d}\xi
$$
due to the bijectivity of
$\FT_\R:\Scal(\Bbb R^{1})\to\Scal(\Bbb R^{1})$.
Integration by parts leads to the formula
$$
  (-{\rm i}x)^\alpha \theta_1(x) = (1-{\rm e}^{{\rm i}x})^\alpha
  (\FT_\R^{-1}\phi_\alpha)(x).
$$
Thus
$$
  (\FT_\R^{-1}\phi_\alpha)(x) =
  \left\{
  \begin{array}{rl}
     \left( \frac{-{\rm i}x}{1-{\rm e}^{{\rm i}x}} \right)^{\alpha}
  \theta_1(x),
                & \textrm{ if } 0<|x|<2\pi, \\ 
        1,      & \textrm{ if } x=0, \\ 
        0,  & \textrm{ if } |x|\geq 2\pi. \\ 
  \end{array}
  \right.
$$
The general $n$-dimensional case is reduced 
to the $1$-dimensional case,
since mapping
$\theta=(x\mapsto\theta_1(x_1)\theta_1(x_2)\cdots \theta_1(x_n))
\in\Scal(\Rn)$
has the desired properties.
\end{proof}

\begin{theorem}\label{THM:torus-extend-symbols}
Let $0<\rho\leq 1$ and $0\leq \delta\leq 1$.
Symbol 
$\widetilde{a}\in S^m_{\rho,\delta}(\Tn\times\Zn)$ is a toroidal
symbol if and only if
there exists an Euclidean symbol
$a\in S^m_{\rho,\delta}(\Tn\times\Rn)$
such that
$\widetilde{a} = a|_{\Tn\times\Zn}$. Moreover,
this extended symbol $a$ is unique modulo
$S^{-\infty}(\Tn\times\Rn)$.
\end{theorem}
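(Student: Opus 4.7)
The plan is to prove both implications, then establish uniqueness modulo $S^{-\infty}$. The direction ``Euclidean $\Rightarrow$ toroidal'' is a consequence of the fact that finite differences are integrated derivatives: iterating
$$\triangle_{\xi_j}a(x,\xi) = \int_0^1 \partial_{\xi_j} a(x,\xi+te_j)\,\drm t$$
expresses $\triangle_\xi^\alpha \partial_x^\beta a(x,\xi)$ as an $|\alpha|$-fold integral of $\partial_\xi^\alpha \partial_x^\beta a$ over the unit cube $\xi+[0,1]^n$, and the Euclidean symbol estimate combined with Peetre's inequality yields the toroidal bound of order $m-\rho|\alpha|+\delta|\beta|$.

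For the converse, given $\widetilde{a} \in S^m_{\rho,\delta}(\Tn\times\Zn)$, Lemma~\ref{LEMMA:torus-ext-Meyer} supplies $\theta \in \Scal(\Rn)$ with $(\FTR\theta)(\xi) = \delta_{0,\xi}$ for $\xi \in \Zn$, together with functions $\phi_\alpha \in \Scal(\Rn)$ satisfying $\partial_\xi^\alpha(\FTR\theta)(\xi) = \overline{\triangle}_\xi^\alpha \phi_\alpha(\xi)$. I define the candidate extension by the discrete convolution
$$a(x,\xi) := \sum_{\eta\in\Zn} \widetilde{a}(x,\eta)\,(\FTR\theta)(\xi-\eta),$$
which converges absolutely and smoothly since $\FTR\theta\in\Scal(\Rn)$ decays faster than any power of $|\xi-\eta|$ while $\widetilde{a}(x,\eta)$ is polynomially bounded; the interpolation property $(\FTR\theta)(k)=\delta_{0,k}$ gives at once $a|_{\Tn\times\Zn}=\widetilde{a}$. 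To check the Euclidean symbol estimates, differentiate under the sum and observe that, as functions of the common variable $\xi-\eta$, the backward $\xi$-difference coincides with $(-1)^{|\alpha|}$ times the forward $\eta$-difference: $\overline{\triangle}_\xi^\alpha \phi_\alpha(\xi-\eta) = (-1)^{|\alpha|}\triangle_\eta^\alpha \phi_\alpha(\xi-\eta)$. The summation-by-parts formula \eqref{EQ:summation-by-parts} then transfers the differences onto $\widetilde{a}$, with both signs cancelling, yielding
$$\partial_\xi^\alpha \partial_x^\beta a(x,\xi) = \sum_{\eta\in\Zn} (\overline{\triangle}_\eta^\alpha \partial_x^\beta \widetilde{a})(x,\eta)\,\phi_\alpha(\xi-\eta).$$
The toroidal bound $|\overline{\triangle}_\eta^\alpha \partial_x^\beta \widetilde{a}(x,\eta)| \leq C\jp{\eta}^{m-\rho|\alpha|+\delta|\beta|}$ combined with Peetre's inequality and the rapid decay of $\phi_\alpha$ produces the desired Euclidean symbol estimate of order $m-\rho|\alpha|+\delta|\beta|$.

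For uniqueness, suppose $b \in S^m_{\rho,\delta}(\Tn\times\Rn)$ vanishes on $\Tn\times\Zn$; the goal is $b \in S^{-\infty}(\Tn\times\Rn)$. Fix $\xi_0\in\Rn$ and a nearest lattice point $\xi^*\in\Zn$, and let $\{j_s\}_{s=1}^M \subset \Zn$ be a set of bounded norm, unisolvent for the space of polynomials of total degree $<N$ in $n$ variables (so the Vandermonde-type matrix $(j_s^\alpha/\alpha!)_{s,|\alpha|<N}$ is invertible). The Euclidean Taylor expansions
$$0 = b(x,\xi^*+j_s) = \sum_{|\alpha|<N}\frac{j_s^\alpha}{\alpha!}\,\partial_\xi^\alpha b(x,\xi^*) + O\!\p{\jp{\xi_0}^{m-\rho N}}$$
form a linear system in the unknowns $\partial_\xi^\alpha b(x,\xi^*)$, so $|\partial_\xi^\alpha b(x,\xi^*)|=O(\jp{\xi_0}^{m-\rho N})$ for every $|\alpha|<N$. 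Plugging these decay rates back into the Taylor expansion of $b(x,\xi_0)$ about $\xi^*$ yields $|b(x,\xi_0)|\leq C_N\jp{\xi_0}^{m-\rho N}$ for every $N$; applying the same reasoning to each $\partial_x^\beta b$ (which also vanishes on $\Tn\times\Zn$) and bootstrapping in $\xi$-derivatives finally gives $b\in S^{-\infty}(\Tn\times\Rn)$. The main obstacle is exactly this uniqueness step: a one-point Taylor argument gives nothing because $b$ has no reason to have small derivatives at integer points, and the remedy is to exploit vanishing at several neighboring lattice points simultaneously and invert a Vandermonde-type system to pull decay down onto the $\xi$-derivatives.
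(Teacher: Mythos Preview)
Your proof is correct. The ``if'' direction (integral representation of differences) and the existence half of ``only if'' (convolution with $\FTR\theta$ and summation by parts via $\phi_\alpha$) are essentially identical to the paper's arguments, differing only in cosmetic details (the paper uses the mean value theorem where you use the integral form of the difference, and the paper lands on forward $\eta$-differences of $\widetilde a$ where you land on backward ones, which is immaterial for the estimate).

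The uniqueness argument is where you and the paper genuinely diverge. The paper takes a single nearest lattice point $\eta$, applies first-order Taylor to gain one factor of $\langle\xi\rangle^{-\rho}$ on $c=a-b$, and then writes ``continuing the argument inductively for $c$ and its derivatives'' without spelling out how the $\xi$-derivatives, which do \emph{not} vanish on the lattice, inherit the improved decay. Your approach avoids this iteration entirely: by evaluating the order-$N$ Taylor expansion of $b$ about $\xi^\ast$ at a unisolvent set of nearby lattice points and inverting the resulting Vandermonde-type system, you obtain $|\partial_\xi^\alpha b(x,\xi^\ast)|\leq C\langle\xi^\ast\rangle^{m-\rho N}$ for all $|\alpha|<N$ in one stroke, and then a single further Taylor expansion transfers this to arbitrary $\xi_0\in\Rn$ and to all $\xi$-derivatives. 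This is more explicit and self-contained than the paper's sketch, at the modest cost of invoking polynomial unisolvency. One small overstatement: your claim that ``a one-point Taylor argument gives nothing'' is not quite fair---the paper's one-point argument does yield the first $\rho$ of improvement, since $\partial_\xi c$ already carries the factor $\langle\xi\rangle^{m-\rho}$ from the symbol class; the real issue is that iterating this step to reach $S^{-\infty}$ is what requires additional work, and your Vandermonde device is a clean way to supply it.
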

The relation between the corresponding pseudo-differential operators
will be given in Theorem \ref{THM:torus-per-symb}.
\begin{proof}
Let us first prove the ``if'' part.
Let $a\in S^m_{\rho,\delta}(\Rn\times\Rn)$, and in this
part we can actually allow any $\rho$ and
$\delta$, for example $0\leq\rho,\delta\leq 1$.
By the Lagrange Mean Value Theorem, if $|\alpha|=1$ then
\begin{eqnarray*}
  \triangle_\xi^\alpha \partial_x^\beta \widetilde{a}(x,\xi)
   =  \triangle_\xi^\alpha \partial_x^\beta a(x,\xi) 
   =  \partial_\xi^\alpha \partial_x^\beta a(x,\xi)|_{\xi=\eta},
\end{eqnarray*}
where $\eta$ is on the line from $\xi$ to $\xi+\alpha$.
For a general multi-index $\alpha\in\Bbb N_0^n$, we also have
$$
  \triangle_\xi^\alpha \partial_x^\beta \widetilde{a}(x,\xi)
  = \partial_\xi^\alpha \partial_x^\beta a(x,\xi)|_{\xi=\eta}
$$
for some
$\eta\in Q := [\xi_1,\xi_1+\alpha_1]
\times\cdots\times[\xi_n,\xi_n+\alpha_n]$.
This can be shown by induction. Indeed, let us write 
$\alpha=\delta+\gamma$ for some $\delta=e_j$. Then we can
calculate
\begin{align*}
 \triangle_\xi^\alpha\partial_x^\beta \widetilde{a}(x,\xi) &
 =  \triangle_\xi^\delta\p{\triangle_\xi^\gamma
    \partial_x^\beta \widetilde{a}}(x,\xi) 
 = \triangle_\xi^{e_j}\p{\partial_\xi^\gamma \partial_x^\beta 
  a(x,\xi)|_{\xi=\zeta}} \\
 & = \partial_\xi^\gamma \partial_x^\beta a(x,\zeta+e_j)-
   \partial_\xi^\gamma \partial_x^\beta a(x,\zeta) =
  \partial_\xi^\alpha \partial_x^\beta a(x,\xi)|_{\xi=\eta}
\end{align*}
for some $\zeta$ and $\eta$,
where we used the induction hypothesis in the first line.
Therefore, we can estimate
\begin{multline*}
  \left|\triangle_\xi^\alpha
  \partial_x^\beta\widetilde{a}(x,\xi)\right|
   = 
  \left|\partial_\xi^\alpha\partial_x^\beta 
  a(x,\xi)|_{\xi=\eta\in Q}
  \right|\leq 
  C_{\alpha\beta m}\ 
  \langle\eta\rangle^{m-\rho|\alpha|+\delta|\beta|} 
   \leq 
  C'_{\alpha\beta m}\ \langle\xi\rangle^{m-\rho|\alpha|+\delta|\beta|}.
\end{multline*}

Let us now prove the ``only if'' part. 
First we show uniqueness. 
Let $a,b\in S^m_{\rho,\delta}(\Tn\times\Rn)$
and assume that 
$a|_{\Tn\times\Zn} = b|_{\Tn\times\Zn}$.
Let $c=a-b$. Then $c\in S^m_{\rho,\delta}(\Tn\times\Rn)$ and
it satisfies $c|_{\Tn\times\Zn} = 0$.
If $\xi\in\Rn\setminus\Zn$,
choose $\eta\in\Zn$ that is the nearest 
point (or one of the nearest points) to $\xi$.
Then we have the first order Taylor expansion
\begin{eqnarray*}
  c(x,\xi)
  & = & c(x,\eta) +
  \sum_{\alpha:\ |\alpha|=1} r_\alpha (x,\xi,\xi-\eta)\ 
  (\xi-\eta)^\alpha \\
  & = &
  \sum_{\alpha:\ |\alpha|=1} r_\alpha (x,\xi,\xi-\eta)\ 
  (\xi-\eta)^\alpha,
\end{eqnarray*}
where
$$
  r_\alpha(x,\xi,\theta) = \int_{0}^{1} (1-t)
  \ \partial_\xi^\alpha c(x,\xi+t\theta)
  \ {\rm d}t.
$$
Hence we have
$|c(x,\xi)| \leq C\ \langle\xi\rangle^{m-\rho}$. Continuing
the argument inductively for $c$ and its derivatives, 
we obtain the uniqueness modulo $S^{-\infty}(\Tn\times\Rn)$.

Let us now show the existence.
Let $\theta\in\Scal(\Rn)$ be as in the Lemma 
\ref{LEMMA:torus-ext-Meyer}.
Define $a:\Tn\times\Rn\to\Bbb C$ by
$$
  a(x,\xi) := \sum_{\eta\in\Zn} (\FTR \theta)(\xi-\eta)
  \ \widetilde{a}(x,\eta).
$$
It is easy to see that $\widetilde{a}=a|_{\Tn\times\Zn}$.
Furthermore, we have
\begin{eqnarray*}
  \left|\partial_\xi^\alpha\partial_x^\beta a(x,\xi)\right|
  & = &
  \left| \sum_{\eta\in\Zn}
    \partial_\xi^\alpha (\FTR \theta)(\xi-\eta)
    \ \partial_x^\beta \widetilde{a}(x,\eta)
  \right| \\
  & = &
  \left| \sum_{\eta\in\Zn}
    \overline{\triangle}_\xi^\alpha\phi_\alpha(\xi-\eta)
    \ \partial_x^\beta \widetilde{a}(x,\eta)
  \right| \\
  & = &
  \left| \sum_{\eta\in\Zn}
    \phi_\alpha(\xi-\eta)
    \ {\triangle}_\eta^\alpha\partial_x^\beta\widetilde{a}(x,\eta)
    \ (-1)^{|\alpha|} \right| \\
  & \leq &
    \sum_{\eta\in\Zn}
    |\phi_\alpha(\xi-\eta)|
    \ C_{\alpha\beta m}\ 
    \langle\eta\rangle^{m-\rho|\alpha|+\delta|\beta|} \\
  & \leq &
    C_{\alpha\beta m}'\ 
    \langle\xi\rangle^{m-\rho|\alpha|+\delta|\beta|}
    \sum_{\eta\in\Zn} |\phi_\alpha(\eta)|
    \ \langle\eta\rangle^{\left|m-\rho|\alpha|+
    \delta|\beta|\right|} \\
  & \leq &
    C_{\alpha\beta m}''\ \langle\xi\rangle^{m-\rho|\alpha|+
    \delta|\beta|},
\end{eqnarray*}
where we used the summation by parts formula 
\eqref{EQ:summation-by-parts}.
In the last two lines we also used that $\phi_\alpha\in\Scal(\Rn)$
and also a simple fact that for $p>0$ we have
$\jp{\xi+\eta}^p\leq \jp{\xi}^p\jp{\eta}^p$ and
$\jp{\xi+\eta}^{-p} \jp{\eta}^{-p}\leq \jp{\xi}^{-p}$,
for all $\xi,\eta\in\Rn$.
Thus $a\in S^m_{\rho,\delta}(\Tn\times\Rn)$.
\end{proof}

\section{Periodization of pseudo-differential operators}
\label{SEC:periodization}

In this section we describe the relation between operators
with Euclidean and toroidal quantizations and between
operators corresponding to symbols $a(x,\xi)$ and
$\widetilde{a}=a|_{\Tn\times\Zn}$, given by 
the operator of periodization of functions.

First we state a
property of a pseudo-differential operator $a(X,D)$ to map 
the space $\Scal(\Rn)$ into itself, which
will be of importance. The 
following class will be sufficient for our purposes, and
the proof is straightforward.

\begin{prop} \label{PROP:torus-S-to-S}
Let $a=a(x,\xi)\in C^\infty(\Rn\times\Rn)$ and assume that
there exist $\epsilon>0$ and $N\in\R$ such that for every
$\alpha, \beta$ there are constants $C_{\alpha\beta}$ and
$M(\alpha,\beta)$ such that the estimate
$$
 \abs{\partial_x^\alpha\partial_\xi^\beta a(x,\xi)} \leq
 C_{\alpha\beta} \jp{x}^{N+(1-\epsilon)|\beta|}
 \jp{\xi}^{M(\alpha,\beta)}
$$
holds for all $x,\xi\in\Rn$. Then pseudo-differential 
operator $a(X,D)$ with symbol $a(x,\xi)$ is continuous
from $\Scal(\Rn)$ to $\Scal(\Rn)$.
\end{prop}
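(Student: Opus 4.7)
The plan is to use the Fourier representation $a(X,D)f(x) = \int_\Rn \erm^{\irm x\cdot\xi} a(x,\xi) \widehat{f}(\xi) \drm\xi$ for $f\in\Scal(\Rn)$, which is well-defined since $a(x,\xi)\widehat{f}(\xi)$ is integrable in $\xi$ (polynomial growth times Schwartz decay). First I would differentiate under the integral sign in $x$ to get, by Leibniz,
$$
  \partial_x^\beta a(X,D)f(x) = \sum_{\delta\leq\beta}\binom{\beta}{\delta}\int_\Rn \erm^{\irm x\cdot\xi}(\irm\xi)^\delta \p{\partial_x^{\beta-\delta}a}(x,\xi)\,\widehat{f}(\xi)\drm\xi,
$$
which is smooth and locally bounded since $\widehat{f}\in\Scal(\Rn)$.

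The heart of the argument is to obtain Schwartz decay of $\partial_x^\beta a(X,D)f$. For this, the plan is to multiply by $x^\alpha$, use the identity $x^\alpha \erm^{\irm x\cdot\xi} = (-D_\xi)^\alpha \erm^{\irm x\cdot\xi}$, and integrate by parts in $\xi$ (with no boundary terms since $\widehat{f}$ is Schwartz). Applying the Leibniz rule to $D_\xi^\alpha$ acting on the product $(\irm\xi)^\delta \p{\partial_x^{\beta-\delta}a}(x,\xi)\widehat{f}(\xi)$, the hypothesis yields the pointwise bound
$$
  \abs{D_\xi^{\gamma_2}\partial_x^{\beta-\delta}a(x,\xi)} \leq C\jp{x}^{N+(1-\epsilon)|\gamma_2|}\jp{\xi}^{M(\beta-\delta,\gamma_2)}
$$
for each contributing $\gamma_2\leq\alpha$, while the remaining derivatives fall on $(\irm\xi)^\delta$ (producing polynomial factors in $\xi$) and on $\widehat{f}(\xi)$ (still Schwartz). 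Collecting all resulting terms and bounding the $\xi$-integrals by a Schwartz seminorm of $f$ gives
$$
  \abs{x^\alpha \partial_x^\beta a(X,D)f(x)} \leq C_{\alpha,\beta}(f)\,\jp{x}^{N+(1-\epsilon)|\alpha|}.
$$

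The crucial gain is the strict inequality $(1-\epsilon)|\alpha| < |\alpha|$. Using the elementary bound $\jp{x}^{2m}\lesssim \sum_{|\alpha|\leq m}x^{2\alpha}$ and the previous estimate with multi-indices of size $2m$, we conclude
$$
  \jp{x}^{2m}\abs{\partial_x^\beta a(X,D)f(x)} \leq C_{m,\beta}(f)\,\jp{x}^{N+2(1-\epsilon)m},
$$
so $\abs{\partial_x^\beta a(X,D)f(x)} \leq C_{m,\beta}(f)\jp{x}^{N-2\epsilon m}$, and choosing $m$ large enough makes this decay faster than any polynomial. Thus $a(X,D)f\in\Scal(\Rn)$. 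Since every constant $C_{m,\beta}(f)$ is a finite sum of Schwartz seminorms of $f$, continuity $\Scal(\Rn)\to\Scal(\Rn)$ follows at once.

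The only genuinely delicate point is bookkeeping: one must verify that the integration by parts is justified (trivial since $\widehat{f}$ is Schwartz and the integrand decays) and that the constants $M(\alpha,\beta)$ appearing in the hypothesis, although arbitrary, only enter through $\xi$-integrals of the form $\int\jp{\xi}^{M'}\abs{D_\xi^{\gamma_3}\widehat{f}(\xi)}\drm\xi$, which are always finite and controlled by Schwartz seminorms of $f$. The hypothesis $\epsilon>0$ is essential and its role is precisely to ensure that each integration by parts in $\xi$ costs less than one full power of $\jp{x}$, so that repeated integration by parts produces genuine gain.
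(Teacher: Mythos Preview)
Your argument is correct and is precisely the standard ``straightforward'' verification the paper alludes to without writing out (the paper gives no proof beyond that one word). One tiny cosmetic point: your displayed bound $\jp{x}^{N+(1-\epsilon)|\alpha|}$ tacitly assumes $0<\epsilon\leq 1$, since when $\epsilon>1$ the Leibniz term with $\gamma_2=0$ dominates and the correct upper bound is $\jp{x}^{N}$; but this is harmless because the hypothesis for some $\epsilon>1$ implies the hypothesis for $\epsilon=1$, so you may assume $\epsilon\leq 1$ without loss of generality.
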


Before analyzing the relation between operators
with Euclidean and toroidal quantizations,
we will describe the periodization
operator that will be of importance for such analysis.

\begin{theorem}[Periodization]\label{THM:torus-Poisson}
The {\it periodization} $\Pcal f:\Rn\to\Bbb C$
of a function $f\in{\Scal}(\Rn)$ is defined by
\begin{equation}\label{periodization}
  \Pcal f(x) := \sum_{k\in\Zn} f(x+2\pi k).
\end{equation}
Then $\Pcal:\Scal(\Rn)\to C^\infty(\Tn)$ is surjective and
$
  \| \Pcal f \|_{L^1(\Tn)} \leq \| f \|_{L^1(\Rn)}.
$
Moreover, we have
\begin{equation}\label{EQ:EQ:torus-Per-Fcoeff}
\Pcal f(x) = 
\FT_{\Tn}^{-1}\left((\FT_{\Rn} f)|_{\Zn}\right)(x)
\end{equation}
and
\begin{equation}\label{EQ:torus-Poissonsum}
  \sum_{k\in\Zn} f(x+2\pi k)
  = \sum_{\xi\in\Zn} {\rm e}^{{\rm i}x\cdot\xi}
  \ \p{\FT_{\Rn}f}(\xi).
\end{equation}
In particular, for $x=0$ this is the Poisson summation formula.
\end{theorem}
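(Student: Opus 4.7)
The statement has four parts, and I would attack them in the following order.

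\textbf{Step 1: $\Pcal$ is well-defined and maps into $C^\infty(\Tn)$.} For $f\in\Scal(\Rn)$ the rapid decay gives, for any multi-index $\beta$ and any compact set $K\subset\Rn$, an estimate $|\partial^\beta f(x+2\pi k)|\leq C_\beta\jp{k}^{-n-1}$ uniformly in $x\in K$, so the series \eqref{periodization} and all its term-by-term derivatives converge absolutely and uniformly on compacta. Hence $\Pcal f\in C^\infty(\Rn)$, and $2\pi$-periodicity is immediate by re-indexing $k$, so $\Pcal f\in C^\infty(\Tn)$.

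\textbf{Step 2: The $L^1$ bound.} Apply Tonelli and translation invariance: the cubes $\{x+2\pi k:x\in[-\pi,\pi)^n\}_{k\in\Zn}$ tile $\Rn$, so
\[
  \n{\Pcal f}_{L^1(\Tn)}\leq\int_{\Tn}\sum_{k\in\Zn}|f(x+2\pi k)|\dslash x
  =\sum_{k\in\Zn}\int_{\Tn}|f(x+2\pi k)|\dslash x=\n{f}_{L^1(\Rn)}.
\]

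\textbf{Step 3: Identification of Fourier coefficients, giving \eqref{EQ:EQ:torus-Per-Fcoeff} and \eqref{EQ:torus-Poissonsum}.} For $\xi\in\Zn$ we have $\erm^{-\irm(x+2\pi k)\cdot\xi}=\erm^{-\irm x\cdot\xi}$, so interchanging sum and integral (justified by Step 1) yields
\[
  \p{\FTT \Pcal f}(\xi)=\int_{\Tn}\erm^{-\irm x\cdot\xi}\sum_{k\in\Zn}f(x+2\pi k)\dslash x
  =\int_{\Rn}\erm^{-\irm x\cdot\xi}f(x)\dslash x=\p{\FTR f}(\xi).
\]
Since $\FTR f\in\Scal(\Rn)$ its restriction to $\Zn$ is in $\Scal(\Zn)$, so applying $\FT_{\Tn}^{-1}$ gives \eqref{EQ:EQ:torus-Per-Fcoeff}, and unwinding the inverse Fourier series gives \eqref{EQ:torus-Poissonsum}. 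The case $x=0$ recovers the classical Poisson summation formula.

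\textbf{Step 4: Surjectivity.} Here I would invoke the function $\theta\in\Scal(\Rn)$ from Lemma \ref{LEMMA:torus-ext-Meyer}, which satisfies $\Pcal\theta\equiv 1$. Given $g\in C^\infty(\Tn)$, view $g$ as a bounded $2\pi$-periodic smooth function on $\Rn$ and set $f(x):=\theta(x)\,g(x)$. Since $\theta\in\Scal(\Rn)$ and $g$ and all its derivatives are bounded, Leibniz gives $f\in\Scal(\Rn)$. Then, using periodicity of $g$,
\[
  \Pcal f(x)=\sum_{k\in\Zn}\theta(x+2\pi k)\,g(x+2\pi k)
  =g(x)\sum_{k\in\Zn}\theta(x+2\pi k)=g(x),
\]
so $\Pcal f=g$, which proves surjectivity.

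The only nontrivial ingredient is the surjectivity step, and even there the hard work has already been done in Lemma \ref{LEMMA:torus-ext-Meyer}; everything else is Fubini and uniform convergence via Schwartz decay. The key identity that makes Step 3 collapse is the $\Zn$-periodicity of the character $\erm^{-\irm x\cdot\xi}$ in $x$ when $\xi\in\Zn$, which is precisely what allows one to fold an integral over $\Rn$ into an integral over $\Tn$.
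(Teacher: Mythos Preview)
Your proof is correct and follows essentially the same approach as the paper: the paper proves only the surjectivity step, using exactly your argument via the function $\theta$ from Lemma~\ref{LEMMA:torus-ext-Meyer}, and explicitly omits the remaining claims as straightforward. Your Steps~1--3 simply supply those omitted details (uniform convergence from Schwartz decay, the $L^1$ bound by tiling and Tonelli, and the Fourier-coefficient computation via the same tiling), so there is nothing to correct.
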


We note that by Theorem \ref{THM:torus-Poisson}
we may extend the periodization to $L^1(\Rn)$,
and also this extension is surjective.
This is actually rather trivial compared to the smooth 
case of Theorem \ref{THM:torus-Poisson} because we can
find a preimage $f\in L^1(\Rn)$ of $g\in L^1(\Tn)$ under
the periodization mapping $\Pcal$ by for example setting
$f=g|_{[0,2\pi]^n}$ and $f=0$ otherwise.

\begin{proof}
Let us show the surjectivity
of $\Pcal:\Scal(\Rn)\to C^\infty(\Tn)$.
Let $\theta\in\Scal(\Rn)$ be a function defined in
Lemma \ref{LEMMA:torus-ext-Meyer}.
Then for any $g\in C^\infty(\Tn)$ it holds that
$$
  \Pcal (g \theta)(x)
  = \sum_{k\in\Zn} g(x+2\pi k)\ \theta(x+2\pi k)
  = g(x) \sum_{k\in\Zn} \theta(x+2\pi k)
  = g(x),
$$
where $g\theta$ is the product of $\theta$ with 
$2\pi\Zn$-periodic function $g$ on $\Rn$.
We omit the straightforward proofs for the other claims.
\end{proof}

We note that in the case of the $N$-inflated torus $N\Tn$
we can use the periodization operator $\Pcal_N$ instead
of $\Pcal$, where $\Pcal_N:\Scal(\Rn)\to C^\infty(N\Tn)$
can be defined by
\begin{equation}\label{EQ:torus-N-period}
  \p{\Pcal_N f}(x)=\FT_{N\Tn}^{-1}\p{\FTR f|_{\frac{1}{N}\Zn}}
  (x),\;\; x\in N\Tn.
\end{equation}

Let us now establish some basic properties of pseudo-differential
operators with respect to periodization. 
\begin{defn}
We will say that a function
$a:\Rn\times\Rn\to\C$ is $2\pi$-periodic (we will always
mean that it is periodic with respect to 
the first variable $x\in\Rn$)
if the function $x\mapsto a(x,\xi)$ is
$2\pi\Zn$-periodic for all $\xi$. 
\end{defn}
As before, we use tilde to denote
the restriction of $a\in C^\infty(\Rn\times\Rn)$ 
to $\Rn\times\Zn$. If $a(x,\xi)$ is
$2\pi$-periodic, we can view it as a function on $\Tn\times\Zn$, 
and we write $\widetilde{a}=a|_{\Tn\times\Zn}.$

\begin{theorem} \label{THM:torus-per-symb}
Let $a=a(x,\xi)\in C^\infty(\Rn\times\Rn)$ 
be $2\pi$-periodic with respect to $x$ for every $\xi\in\Rn$. 
Assume that for every
$\alpha, \beta\in\N_0^n$ there are constants $C_{\alpha\beta}$ and
$M(\alpha,\beta)$ such that estimate
$$
 \abs{\partial_x^\alpha\partial_\xi^\beta a(x,\xi)} \leq
 C_{\alpha\beta} \jp{\xi}^{M(\alpha,\beta)}
$$
holds for all $x,\xi\in\Rn$. 
Let $\widetilde{a}=a|_{\Tn\times\Zn}$.
Then 
\begin{equation}\label{EQ:torus-per-eqs}
\Pcal\circ a(X,D) f=\widetilde{a}(X,D)\circ \Pcal f
\end{equation} 
for all $f\in\Scal(\Rn)$.
\end{theorem}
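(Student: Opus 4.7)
The plan is to compute both sides as Fourier series indexed by $\xi\in\Zn$ and check they agree term by term, with the Poisson summation formula doing the essential work.

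First I would verify that both sides are well-defined elements of $C^\infty(\Tn)$. The growth hypothesis on $a$ lets Proposition \ref{PROP:torus-S-to-S} apply with $N=0$ and $\epsilon=1$, so $a(X,D)$ maps $\Scal(\Rn)$ into $\Scal(\Rn)$; hence $a(X,D)f\in\Scal(\Rn)$ and Theorem \ref{THM:torus-Poisson} gives $\Pcal(a(X,D)f)\in C^\infty(\Tn)$. On the other side $\Pcal f\in C^\infty(\Tn)$ and the symbol $\widetilde{a}\in S^{\cdot}(\Tn\times\Zn)$ satisfies difference-derivative estimates (obtained from $a$ by the Mean Value argument used in the proof of Theorem \ref{THM:torus-extend-symbols}), so $\widetilde{a}(X,D)\Pcal f$ makes sense via \eqref{EQ:torus-pseudo-def}.

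For the left-hand side I would exploit $2\pi$-periodicity of $a$ in the first variable to write
$$
 a(X,D)f(x+2\pi k)
 = \int_\Rn \erm^{\irm(x+2\pi k)\cdot\xi}\, a(x,\xi)\,(\FTR f)(\xi)\,\drm\xi.
$$
For each fixed $x$, the polynomial bound on $a$ together with the Schwartz decay of $\FTR f$ shows that $g_x(\xi):=a(x,\xi)(\FTR f)(\xi)$ lies in $\Scal(\Rn)$. Setting $h_x(y):=\int_\Rn \erm^{\irm y\cdot\xi}g_x(\xi)\,\drm\xi$, which is in $\Scal(\Rn)$, the identity above says $a(X,D)f(x+2\pi k)=h_x(x+2\pi k)$, so the Poisson summation formula \eqref{EQ:torus-Poissonsum} applied to $h_x$ gives
$$
 \Pcal(a(X,D)f)(x)
 = \sum_{k\in\Zn} h_x(x+2\pi k)
 = \sum_{\xi\in\Zn} \erm^{\irm x\cdot\xi}\,(\FTR h_x)(\xi)
 = \sum_{\xi\in\Zn} \erm^{\irm x\cdot\xi}\,a(x,\xi)\,(\FTR f)(\xi).
$$

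For the right-hand side, formula \eqref{EQ:EQ:torus-Per-Fcoeff} yields $\FTT(\Pcal f)(\xi)=(\FTR f)(\xi)$ for $\xi\in\Zn$. Carrying out the $y$-integration in \eqref{EQ:torus-pseudo-def} to extract this Fourier coefficient gives
$$
 \widetilde{a}(X,D)(\Pcal f)(x)
 = \sum_{\xi\in\Zn} \erm^{\irm x\cdot\xi}\,\widetilde{a}(x,\xi)\,(\FTR f)(\xi),
$$
and since $\widetilde{a}(x,\xi)=a(x,\xi)$ on $\Tn\times\Zn$, this coincides with the expression just obtained for $\Pcal(a(X,D)f)(x)$. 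The only non-trivial step is the invocation of Poisson summation, which is legitimate because $h_x\in\Scal(\Rn)$; the same symbol bounds guarantee that every exchange of sum and integral encountered along the way is absolutely convergent, uniformly in $x$ on $\Tn$.
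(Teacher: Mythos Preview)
Your proof is correct and follows essentially the same route as the paper's: both sides are reduced to the common expression $\sum_{\xi\in\Zn}\erm^{\irm x\cdot\xi}a(x,\xi)(\FTR f)(\xi)$ via Poisson summation and the identity $\FTT(\Pcal f)=(\FTR f)|_{\Zn}$. The only cosmetic difference is that the paper interchanges sum and integral and invokes the distributional Dirac comb identity $\sum_{k\in\Zn}\erm^{\irm 2\pi k\cdot\xi}=\delta_{\Zn}(\xi)$ directly, whereas you package the integrand as the Schwartz function $h_x$ and apply \eqref{EQ:torus-Poissonsum} to it; your version has the minor advantage of making the justification of the sum--integral exchange more transparent.
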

Note that it is not important in this theorem
that $a$ is in a symbol class. 

\begin{rem}\label{REM:per-op-Ck}
Note that by
Proposition \ref{PROP:torus-S-to-S} both sides
of \eqref{EQ:torus-per-eqs} are 
well-defined functions in $C^\infty(\Tn)$.
Moreover, equality \eqref{EQ:torus-per-eqs} can be justified
for $f$ in larger classes of functions. For example,
\eqref{EQ:torus-per-eqs} remains true if $f\in C_0^k(\Rn)$
is a $C^k$ compactly supported function for $k$ sufficiently large.
\end{rem}

\begin{proof}[Proof of Theorem \ref{THM:torus-per-symb}]
Let $f\in\Scal(\Rn)$. Then we have
\begin{eqnarray*}
  \Pcal(a(X,D)f)(x) & = & 
  \sum_{k\in\Zn} \p{a(X,D) f}(x+2\pi k) \\
  & = &
  \sum_{k\in\Zn} \int_{\Rn} {\rm e}^{{\rm i}(x+2\pi k)\cdot\xi}
  \ a(x+2\pi k,\xi)\ \p{\FTR f}(\xi) \ {\rm d}\xi \\
  & = &
  \int_{\Rn} \left(\sum_{k\in\Zn} {\rm e}^{{\rm i}2\pi k\cdot\xi}
  \right) {\rm e}^{{\rm i}x\cdot\xi}
  \ a(x,\xi)\ \p{\FTR f}(\xi) \ {\rm d}\xi \\
  & = &
  \int_{\Rn} \ \delta_{\Zn}(\xi)
  \erm^{\irm x\cdot\xi}\ a(x,\xi)\ \p{\FTR f}(\xi)
   \ {\rm d}\xi \\
  & = &
  \sum_{\xi\in\Zn} {\rm e}^{{\rm i}x\cdot\xi}
  \ a(x,\xi)\ \p{\FTR f}(\xi)  \\
  & = &
  \sum_{\xi\in\Zn} {\rm e}^{{\rm i}x\cdot\xi}
  \ a(x,\xi)\ \FTT(\Pcal f)(\xi) \\
  & = &
  \widetilde{a}(X,D)(\Pcal f)(x).
\end{eqnarray*}
As usual, these calculations can be justified in the
sense of distributions. 
\end{proof}

Let us now formulate a useful corollary of Theorem 
\ref{THM:torus-per-symb} that will be of importance later,
in particular in the proof of Theorem \ref{THM:torus-PT}.
If in Theorem \ref{THM:torus-per-symb}
we take function $f$ such that $f=g|_{[0,2\pi]^n}$ for some
$g\in C^\infty(\Tn)$, 
and $f=0$ otherwise, it follows immediately that 
$g=\Pcal f$. Adjusting this argument by shifting the
cube $[0,2\pi]^n$ if necessary, we obtain

\begin{corollary} \label{COR:torus-period-restr}
Let $a=a(x,\xi)$ be as in Theorem \ref{THM:torus-per-symb},
let $g\in C^\infty(\Tn)$, and let $V$ be an open cube in $\Rn$ 
with side length equal to $2\pi$. Assume that the support 
of $g|_{\overline{V}}$ is
contained in $V$. Then we have the equality
$$
  \widetilde{a}(X,D)g=\Pcal\circ a(X,D) (g|_{V}),
$$
where $g|_V:\Rn\to\C$ 
is defined as the restriction of $g$ to $V$ and
equal to zero outside of $V$.
\end{corollary}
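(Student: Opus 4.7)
The plan is to reduce the corollary directly to Theorem \ref{THM:torus-per-symb} by choosing $f:=g|_V$ (extended by zero off $V$) and observing that its periodization recovers $g$. The hypothesis that the support of $g|_{\overline V}$ sits inside the open cube $V$ ensures that $g$ vanishes in a neighborhood of $\partial V$ (relative to $\overline V$); consequently, extending $g|_V$ by zero outside $V$ yields a function $f\in C_0^\infty(\Rn)$. This regularity and compact support is what lets us apply Theorem \ref{THM:torus-per-symb} to $f$, not in its original Schwartz formulation, but in the extension mentioned in Remark \ref{REM:per-op-Ck}, which permits $f\in C_0^k(\Rn)$ with $k$ sufficiently large.

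Next I would verify that $\Pcal f = g$ as elements of $C^\infty(\Tn)$. Since $V$ is an open cube of side length $2\pi$, its $2\pi\Zn$-translates tile $\Rn$ up to a null set, and each $x\in\Rn$ admits a (generically unique) $k\in\Zn$ with $x+2\pi k\in V$; all other translates lie outside $\supp f$. Hence
\begin{equation*}
  \Pcal f(x) = \sum_{k\in\Zn} f(x+2\pi k) = g(x+2\pi k) = g(x),
\end{equation*}
using the $2\pi$-periodicity of $g$. For the finitely many $x$ whose translates land on $\partial V$, the support hypothesis makes the corresponding summands vanish, and smoothness of both sides ensures the identity everywhere.

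With $\Pcal f = g$ in hand, Theorem \ref{THM:torus-per-symb} (in the form of Remark \ref{REM:per-op-Ck}) gives
\begin{equation*}
  \Pcal\circ a(X,D)(g|_V) = \Pcal\circ a(X,D) f = \widetilde a(X,D)\circ \Pcal f = \widetilde a(X,D) g,
\end{equation*}
which is precisely the claim. The only mildly delicate point is checking that Theorem \ref{THM:torus-per-symb} applies to the non-Schwartz function $f=g|_V$; but because $f$ is smooth and compactly supported and the symbol $a$ satisfies uniform-in-$x$ bounds with polynomial growth only in $\xi$, the operator $a(X,D)f$ is still a well-defined smooth function whose periodization converges absolutely, and every step in the proof of Theorem \ref{THM:torus-per-symb} goes through verbatim. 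This is the step I would expect to be the principal technical obstacle, although Remark \ref{REM:per-op-Ck} essentially packages it away.
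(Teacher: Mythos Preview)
Your proposal is correct and follows essentially the same route as the paper: set $f=g|_V$, check $\Pcal f=g$, and apply Theorem~\ref{THM:torus-per-symb}. One small remark: you worry that $f=g|_V$ is ``non-Schwartz'' and therefore invoke Remark~\ref{REM:per-op-Ck}, but in fact your own argument shows $f\in C_0^\infty(\Rn)\subset\Scal(\Rn)$, so Theorem~\ref{THM:torus-per-symb} already applies directly in its original formulation and the detour through Remark~\ref{REM:per-op-Ck} is unnecessary.
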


Since we do not always have periodic symbols 
on $\Rn$ it
may be convenient to periodize them. If $a(X,D)$ is a
pseudo-differential operator with symbol $a(x,\xi)$, 
by $(\Pcal a)(X,D)$ we will denote the pseudo-differential
operator with symbol 
$$(\Pcal a)(x,\xi)=\sum_{k\in\Zn} a(x+2\pi k,\xi).$$ 
This procedure makes sense if, for example,  
$a$ is in $L^1(\Rn)$ with respect to $x$.

In the following proposition we will assume that supports
of symbols and functions are contained in $[-\pi,\pi]^n$.
We note that this is not restrictive if these functions
are already 
compactly supported. Indeed, if supports of $a(\cdot,\xi)$
and $f$ are contained in some
compact set independent of $\xi$, 
we can find some $N\in\N$ such that
they are contained in $[-N\pi,N\pi]^n$, and then use the
analysis on the $N$-inflated torus, with periodization
operator $\Pcal_N$ instead of $\Pcal$, 
defined in \eqref{EQ:torus-N-period}.

\begin{prop}
Let $a=a(x,\xi)$ satisfy
$\supp a\subset [-\pi,\pi]^n\times\Rn$
and be such that for every
$\alpha, \beta\in\N_0^n$ there are constants $C_{\alpha\beta}$ and
$M(\alpha,\beta)$ such that the estimate
$$
 \abs{\partial_x^\alpha\partial_\xi^\beta a(x,\xi)} \leq
 C_{\alpha\beta} \jp{\xi}^{M(\alpha,\beta)}
$$
holds for all $x,\xi\in\Rn$. 
Then we have
$$
a(X,D)f=(\Pcal a)(X,D)f+Rf,
$$
for all $f\in C^\infty(\Rn)$ with
$\supp f\subset [-\pi,\pi]^n$. Operator
$R$ extends to a smoothing pseudo-differential operator
$R:\Dcal^\prime(\Rn)\to \Scal(\Rn)$.
\label{p:p2}
\end{prop}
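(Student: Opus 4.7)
My plan is to identify $R$ explicitly as a sum of pseudo-differential operators whose symbols are integer translates of $a$, and then to control the resulting distributional kernel. Since $\supp a\subset[-\pi,\pi]^n\times\Rn$, the sum $(\Pcal a)(x,\xi)=\sum_{k\in\Zn}a(x+2\pi k,\xi)$ has only finitely many nonzero terms at each $x$, and subtracting the Kohn--Nirenberg formulae for $a(X,D)$ and $(\Pcal a)(X,D)$ yields
\[
Rf(x)=a(X,D)f(x)-(\Pcal a)(X,D)f(x)=-\sum_{k\neq 0}\int_{\Rn}\int_{\Rn}\erm^{\irm(x-y)\cdot\xi}\,a(x+2\pi k,\xi)\,f(y)\dslash y\,\drm\xi
\]
for $f\in C^\infty(\Rn)$ with $\supp f\subset[-\pi,\pi]^n$, establishing the claimed identity. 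I then interpret $R$ via its distributional kernel $K_R(x,y)=-\sum_{k\neq 0}K_k(x,y)$, where $K_k(x,y)=\int_{\Rn}\erm^{\irm(x-y)\cdot\xi}a(x+2\pi k,\xi)\drm\xi$ is understood as an oscillatory integral.

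Next I would show that each $K_k$ ($k\neq 0$), restricted to $y\in[-\pi,\pi]^n$, is smooth and rapidly decreasing in $|x-y|$ uniformly in $k$. The $x$-support of $a(\cdot+2\pi k,\xi)$ is the cube $[-\pi,\pi]^n-2\pi k$, which lies at distance at least $2\pi|k|_\infty-2\pi$ from $[-\pi,\pi]^n$. To justify the oscillatory integral I regularise via $\erm^{\irm(x-y)\cdot\xi}=\jp{\xi}^{-2M}(1-\Lap_y)^M\erm^{\irm(x-y)\cdot\xi}$, which becomes absolutely convergent once $M$ is large enough compared to the $\xi$-growth exponents of $a$ permitted by hypothesis. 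To extract decay I trade powers of $|x-y|$ for $\xi$-derivatives using $(x-y)^\omega\erm^{\irm(x-y)\cdot\xi}=(-\irm)^{|\omega|}\partial_\xi^\omega\erm^{\irm(x-y)\cdot\xi}$; integration by parts in $\xi$ combined with the Leibniz rule applied to $\partial_\xi^\omega[\jp{\xi}^{-2M}a(x+2\pi k,\xi)]$ then yields
\[
|\partial_x^\alpha\partial_y^\beta K_k(x,y)|\leq C_{\alpha\beta N}(1+|x-y|)^{-N}
\]
for all $\alpha,\beta\in\N_0^n$ and all $N\in\N$, with constants independent of $k$ since the hypothesised bounds on $a$ are translation invariant in $x$.

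To assemble $K_R$ and extend $R$ to distributions: for each fixed $x\in\Rn$ at most $2^n$ of the terms $K_k(x,y)$ are nonzero, namely those with $x+2\pi k\in[-\pi,\pi]^n$, and any such $k$ satisfies $2\pi|k|\geq|x|-\pi\sqrt n$. Combined with $|x-y|\geq|x|-\pi\sqrt n$ for $y\in[-\pi,\pi]^n$, this upgrades the previous estimate to
\[
|\partial_x^\alpha\partial_y^\beta K_R(x,y)|\leq C'_{\alpha\beta N}(1+|x|)^{-N}\quad(y\in[-\pi,\pi]^n).
\]
Choosing $\chi\in C_c^\infty(\Rn)$ with $\chi\equiv 1$ on $[-\pi,\pi]^n$, the kernel $\widetilde K_R(x,y):=\chi(y)K_R(x,y)$ is smooth, compactly supported in $y$, and Schwartz in $x$ uniformly in $y\in\supp\chi$. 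Pairing against arbitrary $f\in\Dcal'(\Rn)$ in the $y$-variable therefore produces a function in $\Scal(\Rn)$; the associated operator is smoothing, continuous $\Dcal'(\Rn)\to\Scal(\Rn)$, and by construction coincides with $R$ on functions supported in $[-\pi,\pi]^n$.

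The main technical obstacle is the potential failure of the lower bound $|x-y|\geq 2\pi|k|_\infty-2\pi$ when $|k|_\infty=1$ and $x,y$ meet on a common boundary face of the two cubes, which in principle could spoil the integration-by-parts estimates on $K_k$. I resolve this by the observation that any smooth $a$ with $\supp a\subset[-\pi,\pi]^n$ must have $a$ and all its $x$-derivatives vanishing to infinite order on $\partial([-\pi,\pi]^n)$, so $a(x+2\pi k,\xi)$ vanishes to infinite order precisely on the locus where $|x-y|$ could degenerate, keeping all the estimates valid uniformly up to the boundary.
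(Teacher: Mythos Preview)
Your argument is correct and follows the same basic strategy as the paper: write $R$ as the sum over $k\neq 0$ of the translates of $a$, and use integration by parts in $\xi$ (together with a regularisation in $y$) to show the kernel is smooth with rapid decay once $y$ is confined to $[-\pi,\pi]^n$.

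The one point worth noting is that the paper handles your ``technical obstacle'' more cheaply. Rather than estimating each kernel $K_k(x,y)$ up to the shared faces and invoking the infinite-order vanishing of $a$ on $\partial([-\pi,\pi]^n)$, the paper simply observes that the \emph{symbol} of $R$, namely $(\Pcal a)(x,\xi)-a(x,\xi)$, vanishes identically for $x\in[-\pi,\pi]^n$ (only the $k=0$ term survives there). Hence $Rf(x)=0$ outright for $x\in[-\pi,\pi]^n$, and the integration-by-parts estimate is only needed for $x\in\Rn\setminus[-\pi,\pi]^n$, where $|x-y|>0$ automatically for $y\in\supp f$. This sidesteps the boundary analysis entirely. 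Your route via the flatness of $a$ at the boundary reaches the same conclusion (and indeed the relation $\text{dist}(x+2\pi k,\partial([-\pi,\pi]^n))\leq |x-y|$ makes your final claim rigorous), but it is more work than necessary. On the other hand, your explicit cutoff $\chi(y)$ and the verification that the resulting kernel is Schwartz in $x$ uniformly in $y$ make the extension $R:\Dcal'(\Rn)\to\Scal(\Rn)$ more transparent than in the paper, which is rather terse on this point.
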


\begin{proof}
By the definition we can write
$$
  (\Pcal a)(X,D)f(x)=\sum_{k\in\Zn}\int_{\Rn} 
  {\rm e}^{{\rm i}x\cdot\xi}
  \ a(x+2\pi k,\xi)\ \FTR f(\xi) \ {\rm d}\xi,
$$
and let us define $Rf:=a(X,D)f-(\Pcal a)(X,D)f.$
The assumption on the support of $a$ implies
that for every $x$ there is only one $k\in\Zn$ for which
$a(x+2\pi k,\xi)\not=0$, so the sum consists of only one
term. It follows that $Rf(x)=0$ for $x\in [-\pi,\pi]^n$,
because for such $x$ the non-zero term corresponds to $k=0$. 
Let now $x\in\Rn\setminus [-\pi,\pi]^n$. Since
$$
  Rf(x) = -\sum_{k\in\Zn, k\not=0} \int_{\Rn} \int_{\Rn}
  {\rm e}^{{\rm i}(x-y)\cdot\xi}
  \ a(x+2\pi k,\xi)\ f(y) \dslash y \drm \xi
$$
is just a single term and $|x-y|>0$ on
$\supp f$, we can integrate by
parts with respect to $\xi$ any number of times. This
implies that $R\in\Psi^{-\infty}$ and that $Rf$ decays
at infinity faster than any power. The proof is complete
since the same argument can be applied to the derivatives of
$Rf$. 
\end{proof}

Proposition \ref{p:p2} allows us to extend formula of Theorem
\ref{THM:torus-per-symb} to 
compact perturbations of periodic symbols. We will use
it when $a(X,D)$ is a sum of a constant coefficient operator
and an operator with compactly (in $x$) supported symbol.

\begin{corollary}\label{COR:pseudodifferentialperiodization}
Let $a(X,D)$ be an operator with symbol
$$a(x,\xi)=a_1(x,\xi)+a_0(x,\xi),$$
where $a_1$ is as in Theorem \ref{THM:torus-per-symb}, 
$a_1$ is $2\pi$-periodic in $x$ for every $\xi\in\Rn$, and 
$a_0$ is as in Proposition \ref{p:p2},
supported in $[-\pi,\pi]^n\times\Rn$. 
Define
$$\widetilde{b}(x,\xi):=
\widetilde{a_1}(x,\xi)+\widetilde{\Pcal a_0}(x,\xi),
\;\; x\in\Tn, \xi\in\Zn.$$
Then we have
\begin{equation}\label{EQ:periodizationformula}
  \Pcal\circ a(X,D) f=\widetilde{b}(X,D)\circ \Pcal f+
  \Pcal\circ Rf
\end{equation}
for all $f\in\Scal(\Rn)$, and 
operator $R$ extends to
$R:\Dcal^\prime(\Rn)\to\Scal(\Rn),$ so that
$\Pcal\circ R:\Dcal'(\Rn)\to C^\infty(\Tn)$.
Moreover, if $a_1,a_0\in S^m_{\rho,\delta}(\Rn\times\Rn)$,
then $\widetilde{b}\in S^m_{\rho,\delta}(\Tn\times\Zn)$.
\label{p:p3}
\end{corollary}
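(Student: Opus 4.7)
The plan is to decompose the symbol as $a=a_1+a_0$ and dispatch each piece with a tool already at hand. The first summand is periodic and lies in the scope of Theorem \ref{THM:torus-per-symb}; the second, although not periodic, agrees with its periodization $\Pcal a_0$ modulo a smoothing operator by Proposition \ref{p:p2}, and $\Pcal a_0$ is itself a periodic symbol to which Theorem \ref{THM:torus-per-symb} applies again. Summing the two resulting identities produces the claimed formula, and the symbol-class assertion will follow by combining these with Theorem \ref{THM:torus-extend-symbols}.

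Concretely, linearity gives $\Pcal\circ a(X,D)f = \Pcal\circ a_1(X,D)f + \Pcal\circ a_0(X,D)f$ for $f\in\Scal(\Rn)$. Theorem \ref{THM:torus-per-symb} applied to the periodic $a_1$ produces $\Pcal\circ a_1(X,D)f = \widetilde{a_1}(X,D)\circ\Pcal f$. For the second summand, Proposition \ref{p:p2} gives $a_0(X,D)f = (\Pcal a_0)(X,D)f + Rf$, with $R$ extending to $R:\Dcal'(\Rn)\to\Scal(\Rn)$. The function $\Pcal a_0$ is $2\pi$-periodic in $x$ by construction, and the support condition $\supp a_0\subset[-\pi,\pi]^n\times\Rn$ makes the defining sum $\sum_k a_0(x+2\pi k,\xi)$ locally finite in $x$ — essentially only one $k$ contributes for any given $x$ — which transfers both the smoothness in $x$ and the polynomial bounds in $\xi$ from $a_0$ to $\Pcal a_0$. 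Thus Theorem \ref{THM:torus-per-symb} applies to $\Pcal a_0$ and yields $\Pcal\circ(\Pcal a_0)(X,D)f = \widetilde{\Pcal a_0}(X,D)\circ\Pcal f$. Applying $\Pcal$ to $a_0(X,D)f = (\Pcal a_0)(X,D)f + Rf$ and adding the two $\Pcal\circ$-identities gives the claimed formula with $\widetilde{b}=\widetilde{a_1}+\widetilde{\Pcal a_0}$. The mapping $\Pcal\circ R:\Dcal'(\Rn)\to C^\infty(\Tn)$ is then just the composition of $R:\Dcal'(\Rn)\to\Scal(\Rn)$ with $\Pcal:\Scal(\Rn)\to C^\infty(\Tn)$ from Theorem \ref{THM:torus-Poisson}.

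When $a_1,a_0\in S^m_{\rho,\delta}(\Rn\times\Rn)$, periodicity of $a_1$ and the local-finiteness argument for $\Pcal a_0$ together show that both $a_1$ and $\Pcal a_0$ lie in $S^m_{\rho,\delta}(\Tn\times\Rn)$; the ``if'' direction of Theorem \ref{THM:torus-extend-symbols} then places their restrictions $\widetilde{a_1}$ and $\widetilde{\Pcal a_0}$ in $S^m_{\rho,\delta}(\Tn\times\Zn)$, so that $\widetilde{b}$ belongs to the same class. The genuine technical input is not here but in Proposition \ref{p:p2}: one has to know that the error $R = a_0(X,D)-(\Pcal a_0)(X,D)$, originally characterised via a formula valid only for $f$ supported in $[-\pi,\pi]^n$, extends to a smoothing operator on all of $\Dcal'(\Rn)$. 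With that input in hand, the rest is bookkeeping: verifying periodicity of $\Pcal a_0$, the local finiteness of the defining series, and the propagation of the $(\rho,\delta)$-symbol estimates from $a_0$ to $\Pcal a_0$, so that Theorems \ref{THM:torus-per-symb} and \ref{THM:torus-extend-symbols} can be applied to the periodized piece.
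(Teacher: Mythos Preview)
Your proof is correct and follows essentially the same route as the paper's: decompose $a=a_1+a_0$, replace $a_0(X,D)$ by $(\Pcal a_0)(X,D)+R$ via Proposition~\ref{p:p2}, apply Theorem~\ref{THM:torus-per-symb} to the periodic symbol $a_1+\Pcal a_0$, and invoke Theorem~\ref{THM:torus-extend-symbols} for the symbol-class claim. The only cosmetic difference is that you apply Theorem~\ref{THM:torus-per-symb} separately to $a_1$ and to $\Pcal a_0$, whereas the paper applies it once to their sum $b$; by linearity these are the same.
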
  

\begin{rem}\label{REM:per-op-Ck2}
Recalling Remark~\ref{REM:per-op-Ck},
\eqref{EQ:periodizationformula} can be justified
for larger function classes,
e.g. for $f\in C_0^k(\Rn)$ for $k$ sufficiently large
(which will be of use in Section \ref{SEC:torus-WF}).
\end{rem}

\begin{proof}[Proof of Corollary \ref{COR:pseudodifferentialperiodization}]
By Proposition \ref{p:p2} we can write 
$$a(X,D)=a_1(X,D)+(\Pcal a_0)(X,D)+R,$$ 
with $R:\Dcal^\prime(\Rn)\to\Scal(\Rn).$ 
Let us define $b(x,\xi)=a_1(x,\xi)+(\Pcal a_0)(x,\xi)$,
so that $\widetilde{b}=b|_{\Tn\times\Zn}$.
The symbol $b$ is $2\pi$-periodic, hence
for the operator
$b(X,D)=a_1(X,D)+(\Pcal a_0)(X,D)$
by Theorem \ref{THM:torus-per-symb} we have
$$\Pcal\circ b(X,D)=\widetilde{b}(X,D)\circ \Pcal=
\widetilde{a_1}(X,D)\circ \Pcal+
\widetilde{\Pcal a_0}(X,D)\circ \Pcal.$$
Since $R:\Dcal^\prime(\Rn)\to\Scal(\Rn)$, we also have
$\Pcal\circ R:\Dcal^\prime(\Rn)\to C^\infty(\Tn).$ 
Finally, if $a_1,a_0\in S^m_{\rho,\delta}(\Rn\times\Rn)$,
then $\widetilde{b}\in S^m_{\rho,\delta}(\Tn\times\Zn)$
by Theorem \ref{THM:torus-extend-symbols}.
The proof is complete.
\end{proof}

\section{Toroidal wave front sets}
\label{SEC:torus-WF}

Here we shall briefly
study microlocal analysis not on the cotangent bundle
of the torus but on $\Tn\times\Zn$,
which is better suited for Fourier series representations.
Let us define mappings
\begin{eqnarray}
  \pi_\Rn:\Rn\setminus\{0\}\to\Bbb S^{n-1}, &&
  \pi_\Rn(\xi)=\xi/|\xi|,\\
  \pi_{\Tn\times\Rn}:\Tn\times(\Rn\setminus\{0\})\to\Tn\times\Bbb S^{n-1}, &&
  \pi_{\Tn\times\Rn}(x,\xi)=(x,\xi/|\xi|).
\end{eqnarray}

We say that $K\subset\Rn\setminus\{0\}$ is a cone in $\Rn$ if
$\xi\in K$ and $\lambda>0$ imply $\lambda\xi\in K$.
We say that $\Gamma\subset\Zn\setminus\{0\}$ is a
{\it discrete cone}
if $\Gamma=\Zn\cap K$ for some cone $K$ in $\Rn$;
moreover, if here $K$ is open then $\Gamma$ is called
and {\it open discrete cone}.
We omit the straightforward proof of the following result:
\begin{prop}
$\Gamma \subset\Zn\setminus\{0\}$ is a discrete
cone if and only if $\Gamma=\Zn\cap\pi_\Rn^{-1}(\pi_\Rn(\Gamma))$.
\end{prop}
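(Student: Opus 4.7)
The plan is to show both implications by identifying $\pi_\Rn^{-1}(\pi_\Rn(\Gamma))$ as the minimal cone in $\Rn$ containing $\Gamma$, and exploiting the fact that $\pi_\Rn$ is constant along open rays from the origin.

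For the reverse implication, suppose $\Gamma = \Zn\cap\pi_\Rn^{-1}(\pi_\Rn(\Gamma))$. Set $K:=\pi_\Rn^{-1}(\pi_\Rn(\Gamma))\subset\Rn\setminus\{0\}$. If $\xi\in K$ and $\lambda>0$, then $\pi_\Rn(\lambda\xi)=\lambda\xi/|\lambda\xi|=\xi/|\xi|=\pi_\Rn(\xi)\in\pi_\Rn(\Gamma)$, so $\lambda\xi\in K$. Hence $K$ is a cone in $\Rn$ and $\Gamma=\Zn\cap K$ exhibits $\Gamma$ as a discrete cone.

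For the forward implication, suppose $\Gamma=\Zn\cap K$ for some cone $K\subset\Rn\setminus\{0\}$, and set $\widetilde K:=\pi_\Rn^{-1}(\pi_\Rn(\Gamma))$. The inclusion $\Gamma\subset\Zn\cap\widetilde K$ is immediate since every $\gamma\in\Gamma$ satisfies $\pi_\Rn(\gamma)\in\pi_\Rn(\Gamma)$. Conversely, if $\xi\in\Zn\cap\widetilde K$, then $\pi_\Rn(\xi)=\pi_\Rn(\gamma)$ for some $\gamma\in\Gamma$, whence $\xi=(|\xi|/|\gamma|)\,\gamma$; since $\gamma\in K$, $|\xi|/|\gamma|>0$, and $K$ is a cone, we get $\xi\in K$, and therefore $\xi\in\Zn\cap K=\Gamma$.

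There is no serious obstacle here; the only point to be careful about is that $\pi_\Rn^{-1}(\pi_\Rn(\Gamma))$ is automatically a cone regardless of whether $\Gamma$ began life inside one, which is exactly what makes the equivalence trivial once the definitions are unfolded.
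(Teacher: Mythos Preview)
Your proof is correct. The paper actually omits the proof entirely, calling it ``straightforward,'' and your argument is precisely the direct unfolding of definitions one would expect: recognizing that $\pi_\Rn^{-1}(\pi_\Rn(\Gamma))$ is itself a cone (since $\pi_\Rn$ collapses each open ray from the origin to a single point on $\Snm$), and then checking the two inclusions.
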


Let $u\in\Dcal'(\Tn)$.
The {\it toroidal wave front set}
${\rm WF}_T(u)\subset\Tn\times(\Zn\setminus\{0\})$ is defined as follows:
we say that
$(x_0,\xi_0)\in\Tn\times(\Zn\setminus\{0\})$
does not belong to
${\rm WF}_T(u)$ if and only if
there exist $\chi\in C^\infty(\Tn)$
and an open discrete cone $\Gamma\subset\Zn\setminus\{0\}$
such that $\chi(x_0)\not=0$, $\xi_0\in \Gamma$ and
$$
  \forall N > 0\ \exists C_N < \infty\ \forall\xi\in 
  \Gamma:
  \ \left|\FTT(\chi u)(\xi)\right|
  \leq C_N \langle\xi\rangle^{-N};
$$
in such a case we say that $\FTT(\chi u)$ decays rapidly in $\Gamma$.

We say that a pseudo-differential operator
$A\in\Psi^m(\Tn\times\Zn)={\rm Op}S^m(\Tn\times\Zn)$ is
{\it elliptic at the point
$(x_0,\xi_0)\in{\Tn}\times(\Zn\setminus\{0\})$}
if its toroidal symbol $\sigma_A:\Tn\times\Zn\to\Bbb C$
satisfies
$$
  |\sigma_A(x_0,\xi)| \geq C\ \langle\xi\rangle^m
$$
for some constant $C > 0$ as $|\xi|\to\infty$,
where $\xi\in \Gamma$ and
$\Gamma\subset\Zn\setminus\{0\}$ is an open discrete cone
containing $\xi_0$.
Should $\xi\mapsto\sigma_A(x_0,\xi)$ be rapidly decaying in
an open discrete cone containing $\xi_0$
then $A$ is said to be {\it smoothing at $(x_0,\xi_0)$}.
The {\it toroidal characteristic set of
$A\in\Psi^m(\Tn\times\Zn)$} is
$$
  {\rm char}_T(A) := \{ (x_0,\xi_0)\in\Tn\times(\Zn\setminus\{0\}):
  \ A\ {\rm is\ not\ elliptic\ at}\ (x_0,\xi_0) \},
$$
and the {\it toroidal wave front set of $A$} is
$$
  {\rm WF}_T(A) := \{  (x_0,\xi_0)\in
  \Tn\times(\Zn\setminus\{0\}):
  \ A\ {\rm is\ not\ smoothing\ at}\ (x_0,\xi_0) \}.
$$
Now,
$$
  {\rm WF}_T(A) \cup {\rm char}_T(A) =
  \Tn\times(\Zn\setminus\{0\})
$$
because if $(x,\xi)\not\in {\rm char}_T(A)$, it means that $A$
is elliptic at $(x,\xi)$, and hence not smoothing.
It can be shown that ${\rm WF}_T(A)=\emptyset$
if and only if $A$ is smoothing,
i.e. maps $\Dcal'(\Tn)$ to $C^\infty(\Tn)$
(equivalently, the Schwartz kernel is $C^\infty$-smooth).

\begin{prop}
Let $A,B\in {\rm Op}S^m(\Tn\times\Zn)$. Then
$$
  {\rm WF}_T(AB) \subset {\rm WF}_T(A)\cap{\rm WF}_T(B).
$$
\end{prop}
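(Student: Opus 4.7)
The plan is to invoke the asymptotic composition formula from Theorem \ref{THM:torus-compositions},
$$
\sigma_{AB}(x,\xi) \sim \sum_{\alpha\ge 0} \frac{1}{\alpha!}\,\triangle_\xi^\alpha \sigma_A(x,\xi)\,D_x^{(\alpha)}\sigma_B(x,\xi),
$$
and to estimate each summand on a slightly narrower open discrete cone than the one supplied by the hypothesis. I would split the proof into the two containments $\text{WF}_T(AB)\subset\text{WF}_T(A)$ and $\text{WF}_T(AB)\subset\text{WF}_T(B)$; only the first follows directly from this asymmetric formula, and the second is to be reduced to it by passing to adjoints.

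For $\text{WF}_T(AB)\subset\text{WF}_T(A)$: suppose $(x_0,\xi_0)\notin\text{WF}_T(A)$, so $\sigma_A(x_0,\cdot)$ decays rapidly on an open discrete cone $\Gamma\ni\xi_0$. The first step is to note that for any fixed $M\in\N$ one can choose a slightly narrower open discrete cone $\Gamma'\ni\xi_0$ such that $\xi+\beta\in\Gamma$ whenever $\xi\in\Gamma'$, $|\beta|\le M$, and $|\xi|$ is sufficiently large. By Proposition \ref{PROP:torus-differences}, $\triangle_\xi^\alpha \sigma_A(x_0,\xi)$ is a finite linear combination of values $\sigma_A(x_0,\xi+\beta)$ with $\beta\le\alpha$, so this shrinkage of the cone propagates rapid decay from $\sigma_A(x_0,\cdot)$ to every difference $\triangle_\xi^\alpha\sigma_A(x_0,\cdot)$, $|\alpha|\le M$. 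Combined with the polynomial bound $|D_x^{(\alpha)}\sigma_B(x_0,\xi)|\le C_\alpha\langle\xi\rangle^{m}$ coming from the $S^{m}$-estimate on $\sigma_B$, each term of the asymptotic is of the form rapid-decay times polynomial-growth and hence is itself rapidly decaying on $\Gamma'$. Truncating at order $|\alpha|<N$ leaves a remainder in $S^{2m-N}(\Tn\times\Zn)$, bounded uniformly by $C_N\langle\xi\rangle^{2m-N}$; since $N$ is arbitrary, $\sigma_{AB}(x_0,\cdot)$ is rapidly decaying on $\Gamma'$, so $(x_0,\xi_0)\notin\text{WF}_T(AB)$.

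For $\text{WF}_T(AB)\subset\text{WF}_T(B)$: the formula puts differences on $\sigma_A$ and $x$-derivatives on $\sigma_B$, so rapid decay of $\sigma_B(x_0,\cdot)$ alone does not immediately control the asymptotic expansion. I would therefore pass to adjoints: $(AB)^\ast=B^\ast A^\ast$, and apply the previous containment to the pair $B^\ast,A^\ast$ to get $\text{WF}_T(B^\ast A^\ast)\subset\text{WF}_T(B^\ast)$. This reduces the task to the invariance $\text{WF}_T(C)=\text{WF}_T(C^\ast)$, which I would establish from the adjoint expansion \eqref{Aast(H)n} by the very same bookkeeping: the leading term of $\sigma_{C^\ast}(x_0,\xi)$ is $\overline{\sigma_C(x_0,\xi)}$, and the subleading terms $\triangle_\xi^\alpha D_x^{(\alpha)}\overline{\sigma_C}|_{x_0}$ are controlled by shrinking the cone and using symbol estimates, exactly as above.

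The main obstacle, beyond routine cone-shrinking and book-keeping, is the asymmetry of the composition formula, which forces the second containment to be handled indirectly via the adjoint invariance $\text{WF}_T(C)=\text{WF}_T(C^\ast)$. Once that is in place (a parallel application of the same cone-shrinking and truncation argument to the adjoint expansion), the rest is straightforward.
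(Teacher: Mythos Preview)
Your argument for ${\rm WF}_T(AB)\subset{\rm WF}_T(A)$ is correct, and you are right that the asymmetric composition formula does not directly give the second inclusion: rapid decay of $\sigma_B(x_0,\cdot)$ at the single point $x_0$ need not pass to $D_x^{(\alpha)}\sigma_B(x_0,\cdot)$. However, your adjoint workaround inherits exactly the same defect. The expansion $\sigma_{C^\ast}\sim\sum_\alpha\frac{1}{\alpha!}\triangle_\xi^\alpha D_x^{(\alpha)}\overline{\sigma_C}$ also applies $D_x^{(\alpha)}$ to $\sigma_C$, so ``exactly as above'' does not apply: in your first argument the rapidly-decaying factor $\sigma_A$ received only $\triangle_\xi$-differences (harmless by cone-shrinking), whereas here $\sigma_C$ also receives $x$-derivatives. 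Concretely, with $\sigma_C(x,\xi)=\sin(x_1-(x_0)_1)\langle\xi\rangle$ one has $\sigma_C(x_0,\cdot)\equiv 0$, yet the $|\alpha|=1$ term of the adjoint expansion at $x_0$ is $-\irm\,\triangle_{\xi_1}\langle\xi\rangle\sim -\irm\,\xi_1/\langle\xi\rangle$, which is not rapidly decaying; thus ${\rm WF}_T(C)\ne{\rm WF}_T(C^\ast)$ under the literal pointwise definition, and indeed taking $B=C$ and $A=D_{x_1}$ gives $\sigma_{AB}(x_0,\xi)=-\irm\langle\xi\rangle$, so the second inclusion itself breaks down under that reading.

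For comparison, the paper's proof is a one-line assertion: it records the expansion in both the discrete form and the form using smooth extensions, $\sigma_{AB}\sim\sum_\alpha\frac{1}{\alpha!}\partial_\xi^\alpha\sigma_A\,\partial_x^\alpha\sigma_B$, and simply declares that smoothing of either factor at $(x_0,\xi_0)$ gives smoothing of $AB$ there, treating both inclusions symmetrically with no adjoint step. For this to go through one must interpret ``smoothing at $(x_0,\xi_0)$'' so that rapid decay of the symbol is stable under $x$-differentiation --- for instance, by requiring the decay uniformly on an $x$-neighbourhood of $x_0$ (then Landau--Kolmogorov interpolation between the rapid decay and the uniform $S^m$-bound on second $x$-derivatives controls $\partial_x\sigma_B(x_0,\cdot)$). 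Under that reading both inclusions follow directly from the expansion and your adjoint detour is unnecessary; under the strict pointwise reading neither your argument nor the paper's closes the gap.
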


\begin{proof}
By Theorem \ref{THM:torus-compositions} applied to
pseudo-differential operators $A$ and $B$
we notice that
the toroidal symbol of 
$AB\in {\rm Op}S^{2m}(\Tn\times\Zn)$ has an asymptotic expansion
\begin{eqnarray*}
  \sigma_{AB}(x,\xi)
  & \sim &
  \sum_{\alpha\geq 0} \frac{1}{\alpha!}
  \ \left( \triangle_\xi^\alpha \sigma_A(x,\xi) \right)
  \ D_x^{(\alpha)} \sigma_B(x,\xi) \\
  & \sim &
  \sum_{\alpha\geq 0} \frac{1}{\alpha!}
  \ \left( \partial_\xi^\alpha \sigma_A(x,\xi) \right)
  \ \partial_x^{\alpha} \sigma_B(x,\xi),
\end{eqnarray*}
where in the latter expansion we have used smooth extensions of
toroidal symbols.
This expansion tells that $AB$ is smoothing at $(x_0,\xi_0)$
if $A$ or $B$ is smoothing at $(x_0,\xi_0)$.
\end{proof}

The notion of
the toroidal wave front set is compatible with the action of
pseudo-differential operators:

\begin{prop}\label{PROP:torus-WF-pseudo}
Let $u\in\Dcal'(\Tn)$ and 
$A\in {\rm Op} S^m_{\rho,\delta}(\Tn\times\Zn)$,
where $0\leq\rho\leq1$, $0\leq\delta<1$. Then
$${\rm WF}_T(Au)\subset {\rm WF}_T(u).$$
Especially, if $\va\in C^\infty(\Tn)$ does not vanish,
then ${\rm WF}_T(\va u)={\rm WF}_T(u)$.
\end{prop}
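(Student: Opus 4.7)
The plan is to take $(x_0,\xi_0)\notin {\rm WF}_T(u)$ and construct a cutoff $\chi_2$ with $\chi_2(x_0)\neq 0$ and an open discrete cone $\Gamma_2\ni\xi_0$ such that $\FTT(\chi_2 Au)(\xi)$ decays rapidly on $\Gamma_2$. Unpacking the hypothesis, there exist $\chi\in C^\infty(\Tn)$ with $\chi(x_0)\neq 0$ and an open discrete cone $\Gamma\ni\xi_0$ on which $\FTT(\chi u)$ decays rapidly. After replacing $\chi$ with $\chi\psi$ for a smooth bump $\psi$ equal to $1$ near $x_0$, I may assume $\chi\equiv 1$ on some open neighbourhood $U$ of $x_0$ (this only shrinks the cutoff and preserves the rapid decay). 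I would then fix $\chi_2\in C^\infty(\Tn)$ with $\chi_2(x_0)\neq 0$ and ${\rm supp}(\chi_2)\subset U$, and pick $\Gamma_2\ni\xi_0$ strictly inside $\Gamma$ in the angular sense, so there is a positive lower bound on the angle between any $\xi\in\Gamma_2$ and any $\eta\notin\Gamma$.

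I would then decompose $\chi_2 Au=\chi_2 A(\chi u)+\chi_2 A((1-\chi)u)$. For the second term, the operator $\chi_2 A(1-\chi)$ carries the toroidal amplitude $a(x,y,\eta)=\chi_2(x)\sigma_A(x,\eta)(1-\chi(y))$; since $\chi\equiv 1$ near ${\rm supp}(\chi_2)$, all $y$-derivatives of $(1-\chi(y))$ vanish at $y=x$ on ${\rm supp}(\chi_2)$, so the compound-symbol expansion of Section \ref{SEC:torus-symbols} vanishes to all orders, making $\chi_2 A(1-\chi)$ smoothing. Consequently $\chi_2 A((1-\chi)u)\in C^\infty(\Tn)$ has rapidly decaying Fourier coefficients in every direction.

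For the main term, a direct calculation from \eqref{EQ:torus-pseudo-def} gives
$$\FTT(\chi_2 A(\chi u))(\xi)=\sum_{\eta\in\Zn}K(\xi,\eta)\,\FTT(\chi u)(\eta),\quad K(\xi,\eta):=\FTT\!\p{\chi_2\,\sigma_A(\cdot,\eta)}(\xi-\eta),$$
and integration by parts in $x$, using $|\partial_x^\alpha\sigma_A(x,\eta)|\leq C_\alpha\jp{\eta}^{m+\delta|\alpha|}$, yields $|K(\xi,\eta)|\leq C_N\jp{\xi-\eta}^{-N}\jp{\eta}^{m+\delta N}$ for every $N$. I would split the sum at $\eta\in\Gamma$ versus $\eta\notin\Gamma$. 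On $\Gamma$ I would use $|\FTT(\chi u)(\eta)|\leq C_M\jp{\eta}^{-M}$ together with $\jp{\xi-\eta}^{-N}\leq\jp{\xi}^{-N}\jp{\eta}^N$, choosing $M$ large enough to make $\sum_\eta\jp{\eta}^{N+m+\delta N-M}$ converge and dominate by $C\jp{\xi}^{-N}$. Off $\Gamma$, for $\xi\in\Gamma_2$ the angular separation yields $\jp{\xi-\eta}\geq c(\jp{\xi}+\jp{\eta})$; splitting $\jp{\xi-\eta}^{-N}\leq C\jp{\xi}^{-a}\jp{\eta}^{-b}$ with $a+b=N$ and using the polynomial growth $|\FTT(\chi u)(\eta)|\leq C\jp{\eta}^L$ of the tempered sequence, I would choose $b>m+\delta N+L+n$ to make the sum converge, leaving decay of order $\jp{\xi}^{-a}$ with $a$ as large as $N(1-\delta)-{\rm const}$.

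The hard part will be this off-$\Gamma$ estimate: the angular-separation lemma and the splitting of $\jp{\xi-\eta}^{-N}$ must be set up precisely, and the budget $a\approx N(1-\delta)$ for the final decay is exactly what forces the restriction $\delta<1$. Since $\delta<1$ and $N$ is arbitrary, rapid decay on $\Gamma_2$ follows, giving $(x_0,\xi_0)\notin {\rm WF}_T(Au)$. The ``moreover'' claim is then immediate: multiplication by $\va\in C^\infty(\Tn)$ is an operator in ${\rm Op}\,S^0(\Tn\times\Zn)$ with symbol $\va(x)$, so the first part gives ${\rm WF}_T(\va u)\subset {\rm WF}_T(u)$; since $\va$ is non-vanishing on the compact $\Tn$, $\va^{-1}\in C^\infty(\Tn)$ too, and applying the first part to $\va u$ with multiplication by $\va^{-1}$ yields the reverse inclusion ${\rm WF}_T(u)={\rm WF}_T(\va^{-1}(\va u))\subset {\rm WF}_T(\va u)$.
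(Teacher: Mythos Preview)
The paper omits this proof entirely, calling it ``straightforward,'' so there is no authors' argument to compare against. Your approach is the standard one and is essentially correct; I flag two points.

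First, a minor imprecision: replacing $\chi$ by $\chi\psi$ does not make the cutoff identically $1$ near $x_0$. What you want is to pick a new cutoff $\psi\in C^\infty(\Tn)$ with $\psi\equiv 1$ near $x_0$ and $\supp\psi\subset\{\chi\neq 0\}$; then $\psi u=(\psi/\chi)(\chi u)$ with $\psi/\chi\in C^\infty(\Tn)$ (extended by zero off $\supp\psi$), and the same convolution/angular-separation split you use for the main term shows that $\FTT(\psi u)$ decays rapidly in any open discrete cone strictly inside $\Gamma$. After this replacement and a shrinking of $\Gamma$, your assumption $\chi\equiv 1$ near $x_0$ is legitimate.

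Second, and more substantively: your treatment of the off-support term $\chi_2 A((1-\chi)u)$ invokes the compound-symbol theorem of Section~\ref{SEC:torus-symbols}, which is stated there only under $0\leq\delta<\rho\leq 1$, a narrower range than the Proposition's hypothesis $0\leq\rho\leq 1$, $0\leq\delta<1$. You can sidestep this by showing directly that the kernel $\chi_2(x)\,K_A(x,y)\,(1-\chi(y))$ is $C^\infty$: on its support some $|\erm^{\irm(x_j-y_j)}-1|\geq c>0$, so summation by parts \eqref{EQ:summation-by-parts} in $\xi_j$ gains a factor $\jp{\xi}^{-\rho}$ per step, which suffices for any $\rho>0$ and any $\delta<1$. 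The case $\rho=0$, however, genuinely fails: translation $u\mapsto u(\cdot-a)$ has toroidal symbol $\erm^{-\irm a\cdot\xi}\in S^0_{0,0}(\Tn\times\Zn)$ (its differences are bounded constants) yet moves the wave front set in $x$. So the Proposition as printed is slightly too general; presumably $\rho>0$ (or the usual $\delta<\rho$) is intended, and in that range your argument, with the direct off-diagonal estimate substituted in, is complete. Your treatment of the ``especially'' clause is correct as written.
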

We omit the proof which is straightforward.

We will not pursue the complete analysis of toroidal wave
front sets here because most of their properties can be
obtained from the known properties of the usual wave front
sets and the following relation, where ${\rm WF}(u)$ stands
for the usual H\"ormander's wave front set of a
distribution $u$.

\begin{theorem}
Let $u\in\Dcal'(\Tn)$. Then
$$
  {\rm WF}_T(u)=\p{\Tn\times\Zn} \cap {\rm WF}(u).
$$
\end{theorem}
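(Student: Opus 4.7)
My plan is to relate both wave front sets via smooth cutoffs supported inside a fundamental domain $V$ of $\Tn$, using the key observation from Theorem \ref{THM:torus-Poisson} that for $f \in \Ecal'(\Rn)$ with $\supp f \subset V^\circ$ one has $\Pcal f = f$ on $V^\circ$ and $\FTT(\Pcal f)(\xi) = \FTR f(\xi)$ for $\xi \in \Zn$. For the inclusion ${\rm WF}_T(u) \subset (\Tn \times \Zn) \cap {\rm WF}(u)$ (equivalently, the reverse containment of complements), suppose $\xi_0 \in \Zn \setminus\{0\}$ and $(x_0,\xi_0) \notin {\rm WF}(u)$. Choose a Euclidean cutoff $\tilde\chi \in C_c^\infty(\Rn)$ with $\tilde\chi(x_0) \neq 0$, supported in a small neighbourhood of $x_0$ inside $V$, and an open cone $K \ni \xi_0$ on which $\FTR(\tilde\chi u)$ is rapidly decaying. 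Viewing $\tilde\chi$ as $\chi \in C^\infty(\Tn)$ extended by zero, one has $\chi u = \Pcal(\tilde\chi u)$ in $\Dcal'(\Tn)$, so by \eqref{EQ:EQ:torus-Per-Fcoeff}, $\FTT(\chi u)(\xi) = \FTR(\tilde\chi u)(\xi)$ for $\xi \in \Zn$, which is rapidly decaying on the open discrete cone $\Gamma := \Zn \cap K \ni \xi_0$. Hence $(x_0,\xi_0) \notin {\rm WF}_T(u)$.

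For the converse inclusion, suppose $(x_0,\xi_0) \notin {\rm WF}_T(u)$ with $\xi_0 \in \Zn \setminus\{0\}$. By a standard bump-multiplication reduction (using the convolution structure of $\FTT$ together with the cone-separation argument used below), I may assume the toroidal cutoff $\chi \in C^\infty(\Tn)$ has support in a small ball inside a fundamental domain $V^\circ$, with $\FTT(\chi u)$ rapidly decaying on an open discrete cone $\Gamma = \Zn \cap K \ni \xi_0$. Setting $f := \chi u|_V \in \Ecal'(V^\circ)$, one has $\Pcal f = \chi u$, so $\FTR f(\eta) = \FTT(\chi u)(\eta)$ decays rapidly on $\Zn \cap K$, while $\FTR f \in C^\infty(\Rn)$ satisfies a polynomial bound $|\FTR f(\xi)| \leq C \jp{\xi}^{M_0}$ since $f$ is a compactly supported distribution of finite order.

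To upgrade rapid decay from the lattice cone $\Zn \cap K$ to a smaller open cone $K' \subset K$ containing $\xi_0$ (with $\overline{K' \cap \Snm} \subset K$), I choose $\psi \in C_c^\infty(V^\circ)$ with $\psi \equiv 1$ on $\supp \chi$, so that $f = \psi \cdot \Pcal f$. Since the Euclidean Fourier transform of the periodic distribution $\Pcal f$ equals $\sum_{\eta \in \Zn} \FTR f(\eta)\, \delta_\eta$ as a tempered distribution on $\Rn$, convolution with $\FTR \psi \in \Scal(\Rn)$ yields the sampling-type representation
$$
  \FTR f(\xi) = \sum_{\eta \in \Zn} \FTR f(\eta)\, \FTR \psi(\xi - \eta), \quad \xi \in \Rn.
$$
Splitting the sum at $\eta \in \Zn \cap K$ versus $\eta \in \Zn \setminus K$: the first part is controlled via Peetre's inequality $\jp{\eta}^{-N} \leq C \jp{\xi}^{-N} \jp{\xi-\eta}^N$ and the Schwartz decay of $\FTR \psi$, giving $O(\jp{\xi}^{-N})$ for any $N$; the second needs the restriction $\xi \in K'$, so that cone separation yields $|\xi - \eta| \geq c\max(|\xi|,|\eta|)$ and hence $\jp{\xi - \eta}^{-L} \leq C \jp{\xi}^{-L/2} \jp{\eta}^{-L/2}$, which against the polynomial bound on $\FTR f$ produces a summable estimate of order $O(\jp{\xi}^{-L/2})$ for $L$ arbitrarily large. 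Thus $\FTR(\tilde\chi u)(\xi) = \FTR f(\xi)$ decays rapidly on $K'$, so $(x_0,\xi_0) \notin {\rm WF}(u)$. The main obstacle is precisely this upgrade step: transferring rapid decay from the discrete lattice to a continuous cone, which depends on the sampling representation furnished by $\psi$ and on cone separation to offset the polynomial growth of $\FTR f$ outside $K$.
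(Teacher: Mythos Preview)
Your proof is correct, and the easy inclusion ${\rm WF}_T(u)\subset(\Tn\times\Zn)\cap{\rm WF}(u)$ is handled exactly as in the paper, via $\FTT(\Pcal f)=\FTR f|_{\Zn}$.

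For the harder inclusion you take a genuinely different route. The paper introduces a zero-order Fourier multiplier $a(X,D)$ whose symbol is a smooth conical cutoff equal to $1$ on a subcone $K_1\ni\xi_0$ and supported in $K$; it then invokes the periodization identity for pseudo-differential operators (Corollary~\ref{COR:pseudodifferentialperiodization}) to deduce that $\Pcal(\chi\,a(X,D)(\chi u))$ is smooth on $\Tn$, hence $\chi\,a(X,D)(\chi u)\in C_0^\infty(\Rn)$, and reads off rapid decay of $\FTR(\chi u)$ on $K_1$ from $\FTR(a(X,D)(\chi u))=w(\xi/|\xi|)\FTR(\chi u)$. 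Your argument instead bypasses the operator calculus entirely: you use the Shannon-type sampling identity
\[
\FTR f(\xi)=\sum_{\eta\in\Zn}\FTR f(\eta)\,\FTR\psi(\xi-\eta),
\]
valid because $f=\psi\cdot\Pcal f$, and then estimate the sum directly by splitting at $\eta\in\Zn\cap K$ versus $\eta\notin K$ and using cone separation $|\xi-\eta|\geq c\max(|\xi|,|\eta|)$ for $\xi\in K'$, $\eta\notin K$. Your method is more elementary and self-contained---it needs only Paley--Wiener--Schwartz and Peetre's inequality rather than the periodization machinery of Section~\ref{SEC:periodization}---while the paper's method has the virtue of illustrating how the toroidal pseudo-differential calculus developed earlier feeds back into microlocal statements. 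Both the paper and you gloss over the preliminary reduction that $\chi$ may be taken with small support; as you note, this is the standard convolution-plus-cone-separation argument on $\Zn$, entirely parallel to what you carry out on $\Rn$.
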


\begin{proof}
Without loss of generality,
let $u\in C^k(\Tn)$ for some large $k$,
and let $\chi\in C_0^\infty(\Rn)$ such that
${\rm supp}(\chi)\subset (0,2\pi)^n$.
If $\FTR(\chi u)$ decays rapidly in an open cone $K\subset\Rn$
then $\FTT(\Pcal(\chi u))=\FTR(\chi u)|_{\Zn}$ decays rapidly
in the open discrete cone $\Zn\cap K$.
Hence ${\rm WF}_T(u) \subset (\Tn\times\Zn)\cap {\rm WF}(u)$.

Next, we need to show that
$(\Tn\times\Zn)\setminus {\rm WF}_T(u)
\subset (\Tn\times\Zn)\setminus {\rm WF}(u)$.
Let $(x_0,\xi_0)\in (\Tn\times\Zn)\setminus {\rm WF}_T(u)$
(where $\xi_0\not=0$). We must show that
$(x_0,\xi_0)\not\in {\rm WF}(u)$.
There exist $\chi\in C^\infty(\Tn)$
(we may assume that ${\rm supp}(\chi)\subset (0,2\pi)^n$ as above)
and an open cone $K\subset\Rn$ such that
$\chi(x_0)\not=0$, $\xi_0\in\Zn\cap K$ and that
$\FTT(\Pcal(\chi u))$ decays rapidly in $\Zn\cap K$.

Let $K_1\subset\Rn$ be an open cone such that
$\xi_0\in K_1\subset K$ and that
the closure $\overline{K_1}\subset K\cup\{0\}$.
Take any function $w\in C^\infty(\Bbb S^{n-1})$ such that
$$
  w(\omega) = \begin{cases}
    1, & {\rm if}\ \omega\in\Bbb S^{n-1}\cap K_1, \\
    0, & {\rm if}\ \omega\in\Bbb S^{n-1}\setminus K.
    \end{cases}
$$
Let $a\in C^\infty(\Rn\times\Rn)$ be independent
of $x$ and such that
$a(x,\xi) = w(\xi/|\xi|)$ whenever $|\xi| \geq 1$.
Then $a\in S^0(\Rn\times\Rn)$.
Let $\widetilde{a} = a|_{\Tn\times\Zn}$,
so that $\widetilde{a}\in S^0(\Tn\times\Zn)$
by Theorem \ref{THM:torus-extend-symbols}.

By Corollary~\ref{COR:pseudodifferentialperiodization}, we have
$\Pcal(\chi\ a(X,D)f)
= \Pcal(\chi)\ \widetilde{a}(X,D)(\Pcal f) + \Pcal(R f)$
for all Schwartz test functions $f$,
for a smoothing operator $R:\Dcal'(\Rn)\to\Scal(\Rn)$.
By Remark \ref{REM:per-op-Ck2} we also have
$\Pcal(\chi\ a(X,D)(\chi u))
= \Pcal(\chi)\ \widetilde{a}(X,D)(\Pcal(\chi u)) + \Pcal(R(\chi u))$,
where the right hand side belongs to $C^\infty(\Tn)$,
since its Fourier coefficients decay rapidly on the whole $\Zn$.
Therefore also $\Pcal(\chi\ a(X,D)(\chi u))$
belongs to $C^\infty(\Tn)$.

Thus
$\chi\ a(X,D)(\chi u)\in C_0^\infty(\Rn)$.
Let $\xi\in K_1$ such that $|\xi|\geq 1$. Then we have
$\FTR(a(X,D)(\chi u))(\xi) = w(\xi/|\xi|)\ \FTR(\chi u)(\xi)
= \FTR(\chi u)(\xi)$.
Thus $\FTR(\chi u)$ decays rapidly on $K_1$.
Therefore $(x_0,\xi_0)$ does not belong to ${\rm WF}(u)$.
\end{proof}

\section{Calculus of Fourier series operators}
\label{SEC:FSO-calculus}

In this section we consider analogues of Fourier integral 
operators on the torus $\Tn$. We will call such operators
Fourier series operators and study their composition
formulae with pseudo-differential operators 
on the torus. Fourier series operators will be
defined by the formula
\begin{equation}\label{EQ:torus-TP-defT0}
   Tu(x) := \sum_{\xi\in\Zn} \int_{\Tn}
  {\rm e}^{{\rm i}(\phi(x,\xi)-y\cdot\xi)}
  \ a(x,y,\xi)\ u(y)
  \dslash y,
\end{equation} 
where $a\in C^\infty(\Tn\times\Tn\times\Zn)$ is a 
toroidal amplitude and $\phi$ is a real-valued
phase function. We note that if $u\in C^\infty(\Tn)$,
for function $Tu$ to
be well-defined on the torus we need that the integral
\eqref{EQ:torus-TP-defT0} is $2\pi$-periodic in $x$.
Therefore, by identifying functions on $\Tn$ with
$2\pi$-periodic functions on $\Rn$, we will require that
the phase function $\phi:\Rn\times\Zn\to\R$ is such
that function $x\mapsto {\rm e}^{\irm\phi(x,\xi)}$ 
is $2\pi$-periodic for
all $\xi\in\Zn$. Then operator $T:C^\infty(\Tn)\to\Dcal'(\Tn)$
can be justified in the usual way for oscillatory integrals.

\begin{rem}
Assume that function $\phi:\Rn\times\Zn\to\R$ is 
in $C^k$ with respect to $x$ for all $\xi\in\Zn$.
Assume also that function $x\mapsto {\rm e}^{\irm\phi(x,\xi)}$ 
is $2\pi$-periodic for
all $\xi\in\Zn$. Then functions 
$x\mapsto\partial_x^\alpha\phi(x,\xi)$
are $2\pi$-periodic for all
$\xi\in\Zn$ and all $\alpha\in\N_0^n$ with 
$1\leq |\alpha|\leq k$.
\end{rem}
Composition formulae of this section can be compared with
those obtained by Ruzhansky and Sugimoto \cite{RS06, RS08a},
but the assumptions on the regularity or boundedness of
higher order derivatives of phases and amplitudes are 
redundant here thanks to the fact that $\xi\in\Zn$ takes
only discrete values.

We recall the notation for the
toroidal version of Taylor polynomials and 
the corresponding derivatives introduced in
\eqref{EQ:torus-def-torpol} and 
\eqref{EQ:torus-def-toryder}, which will be used in the
formulation of the following theorems. 
We also define 
\begin{equation}\label{EQ:torus-def-toryder-min}
(-D_y)^{(\alpha)}
= (-D_{y_1})^{(\alpha_1)}\cdots (-D_{y_n})^{(\alpha_n)},
\end{equation} 
where
$-D_{y_j}^{(0)} = I$ and
$$
  (-D_{y_j})^{(k+1)}
  = (-D_{y_j})^{(k)}\left(-\frac{\partial}{{\rm i}\partial y_j} - 
    k I\right)
  = -\frac{\partial}{{\rm i}\partial y_j}
  \left(-\frac{\partial}{{\rm i}\partial y_j} - I\right)
  \cdots \left(-\frac{\partial}{{\rm i}\partial y_j} - k I\right).
$$
We also recall
definition \eqref{EQ:torus-pseudo-def} of 
pseudo-differential operators
with toroidal symbols.

\begin{theorem}[Composition $T\circ P(X,D)$]\label{THM:torus-TP}
Let $\phi:\Rn\times\Zn\to\R$ be such that function
$x\mapsto \erm^{\irm\phi(x,\xi)}$ 
is $2\pi$-periodic for all $\xi\in\Zn$.
Let $T:C^\infty(\Tn)\to\Dcal'(\Tn)$ be defined by
\begin{equation}\label{EQ:torus-TP-defT}
   Tu(x) := \sum_{\xi\in\Zn} \int_{\Tn}
  {\rm e}^{{\rm i}(\phi(x,\xi)-y\cdot\xi)}
  \ a(x,y,\xi)\ u(y)
  \dslash y,
\end{equation}
where
the toroidal amplitude
$a\in C^\infty(\Tn\times\Tn\times\Zn)$
satisfies
$$
  \left| \partial_x^\alpha \partial_y^\beta a(x,y,\xi) \right|
  \leq C_{\alpha\beta m}\ \langle\xi\rangle^m
$$
for every $x,y\in\Tn$, $\xi\in\Zn$ and
$\alpha,\beta\in\Bbb N_0^n$.
Let $p\in S^\ell(\Tn\times\Zn)$ be a toroidal symbol and
$P=p(X,D)$ the corresponding pseudo-differential operator.
Then the composition $TP$ has the form
$$
  TPu(x) = \sum_{\xi\in\Zn} \int_{\Tn}
  {\rm e}^{{\rm i}(\phi(x,\xi)-z\cdot\xi)}
  \ c(x,z,\xi)\ u(z)
   \dslash z,
$$
where
$$
  c(x,z,\xi) = \sum_{\eta\in\Zn} \int_{\Tn}
  {\rm e}^{{\rm i}(y-z)\cdot(\eta-\xi)}
  \ a(x,y,\xi)\ p(y,\eta)
   \dslash y
$$
satisfies
$$
  \left| \partial_x^\alpha \partial_z^\beta c(x,z,\xi) \right|
  \leq C_{\alpha\beta m t}\ \langle\xi\rangle^{m+\ell}
$$
for every $x,z\in\Tn$, $\xi\in\Zn$ and
$\alpha,\beta\in\N_0^n$.
Moreover, we have an asymptotic expansion
$$
  c(x,z,\xi) \sim \sum_{\alpha\geq 0}
  \frac{1}{\alpha!}\ (-D_z)^{(\alpha)}
  \left[ a(x,z,\xi)\ \triangle_\xi^\alpha p(z,\xi)
  \right].
$$
Furthermore, if
$0\leq\delta<\rho\leq 1$,
$p\in S_{\rho,\delta}^\ell(\Tn\times\Zn)$ and
$a\in S_{\rho,\delta}^m(\Tn\times\Tn\times\Zn)$, then
$c\in S_{\rho,\delta}^{m+\ell}(\Tn\times\Tn\times\Zn)$.
\end{theorem}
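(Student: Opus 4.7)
The plan is threefold: (1) derive the stated formula for $c$ by direct composition; (2) obtain the asymptotic expansion via the discrete Taylor formula (Theorem \ref{THM:torus-Taylor-thm}) combined with the key identity $\eta^{(\alpha)} e^{i(y-z)\cdot\eta} = D_y^{(\alpha)} e^{i(y-z)\cdot\eta}$; and (3) estimate the remainder and the derivatives of $c$ by summation and integration by parts. For (1), substituting the definitions of $T$ and $P$ into $TPu$ and rearranging,
$$TPu(x) = \sum_{\xi,\eta\in\Zn}\int_{\Tn}\int_{\Tn} e^{i(\phi(x,\xi)-y\cdot\xi+(y-z)\cdot\eta)}\, a(x,y,\xi)\, p(y,\eta)\, u(z)\,\dslash y\,\dslash z.$$
Pulling the factor $e^{i(\phi(x,\xi)-z\cdot\xi)}$ out of the phase and isolating the inner sum-integral yields the claimed formula for $c(x,z,\xi)$. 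These manipulations are justified as oscillatory sum-integrals: integrating by parts in $y$ via $(1-\Lap_y)^N e^{i(y-z)\cdot\eta} = \langle\eta\rangle^{2N} e^{i(y-z)\cdot\eta}$ yields enough $\eta$-decay to make the sum absolutely convergent, provided $N$ is chosen large relative to $\ell$.

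For the asymptotic expansion I reindex $\eta \mapsto \xi + \eta$ so that
$$c(x,z,\xi) = \sum_{\eta\in\Zn}\int_{\Tn} e^{i(y-z)\cdot\eta}\, a(x,y,\xi)\, p(y,\xi+\eta)\,\dslash y,$$
and apply Theorem \ref{THM:torus-Taylor-thm} to expand $p(y,\xi+\eta)$ about $\xi$ in the increment $\eta$. For each main term with $|\alpha|<M$, I replace $\eta^{(\alpha)} e^{i(y-z)\cdot\eta}$ by $D_y^{(\alpha)} e^{i(y-z)\cdot\eta}$ (which is precisely the motivation behind definitions \eqref{EQ:torus-def-torpol} and \eqref{EQ:torus-def-toryder}), integrate by parts in $y\in\Tn$ to transfer $(-D_y)^{(\alpha)}$ onto the smooth factor $a(x,y,\xi)\,\triangle_\xi^\alpha p(y,\xi)$, and use the Fourier inversion identity $\sum_{\eta\in\Zn} e^{i(y-z)\cdot\eta} = \delta(y-z)$ on $\Tn$ to collapse the $y$-integral to $y=z$. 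This yields the stated term $\tfrac{1}{\alpha!}(-D_z)^{(\alpha)}\bigl[a(x,z,\xi)\,\triangle_\xi^\alpha p(z,\xi)\bigr]$.

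The remainder contribution is
$$R_M(x,z,\xi) := \sum_{\eta\in\Zn}\int_{\Tn} e^{i(y-z)\cdot\eta}\, a(x,y,\xi)\, r_M(y,\xi,\eta)\,\dslash y,$$
which I estimate using the Taylor remainder bound of Theorem \ref{THM:torus-Taylor-thm} after splitting the $\eta$-sum into the regions $|\eta|\le\langle\xi\rangle/2$ and $|\eta|>\langle\xi\rangle/2$. On the inner region $|\xi+\nu|\sim\langle\xi\rangle$ uniformly in $\nu\in Q(\eta)$, so the bound on $|r_M|$ is essentially $\langle\xi\rangle^{\ell-\rho M}|\eta^{(\alpha)}|$, and decay in $\eta$ is harvested by $(1-\Lap_y)^N e^{i(y-z)\cdot\eta} = \langle\eta\rangle^{2N} e^{i(y-z)\cdot\eta}$ with integration by parts, introducing at most $\langle\xi\rangle^{\delta\cdot 2N}$ worth of symbol growth. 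On the outer region $|\eta|>\langle\xi\rangle/2$, repeated $y$-integration by parts gives arbitrary polynomial decay in $\langle\eta\rangle$ and an arbitrarily negative order in $\xi$. Combining, $|R_M|$ is bounded by a constant times $\langle\xi\rangle^{m+\ell-(\rho-\delta)M}$ up to controllable derivative losses, which establishes both the asymptoticity of the expansion and the bound $|c|\lesssim\langle\xi\rangle^{m+\ell}$; the derivative estimates for $\partial_x^\alpha\partial_z^\beta c$ follow by the same argument applied term-by-term. The final claim $c\in S^{m+\ell}_{\rho,\delta}(\Tn\times\Tn\times\Zn)$ is then read off from the expansion by applying the difference Leibniz formula of Proposition \ref{PROP:torus-differences} to check that each $\triangle_\xi^\gamma$ produces the expected $\langle\xi\rangle^{-\rho|\gamma|}$ factor.

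The main obstacle is the remainder analysis: one must carefully balance the potential blow-up of $\triangle_\xi^\alpha p(y,\xi+\nu)$ on the large-$\eta$ region, where $|\xi+\nu|$ can far exceed $|\xi|$, against the oscillatory $\eta$-decay gained from $y$-integration by parts. The dyadic-type splitting above is standard, but tracking the $(\rho,\delta)$ bookkeeping through this split and through the discrete Leibniz rule is the technical heart of the proof; it is precisely at this point that the hypothesis $\delta<\rho$ is used, to guarantee that successive terms in the expansion have genuinely decreasing order.
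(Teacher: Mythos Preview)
Your proof is correct and follows the same architecture as the paper: compose directly, reindex $\eta\mapsto\xi+\theta$, apply the discrete Taylor expansion of Theorem~\ref{THM:torus-Taylor-thm}, convert the main terms via $\theta^{(\alpha)}e^{i(y-z)\cdot\theta}=D_y^{(\alpha)}e^{i(y-z)\cdot\theta}$ followed by integration by parts and Fourier inversion, then control the remainder by $(I-\Lap_y)^N$ integration by parts.

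The one place you deviate is the remainder estimate, where you split the $\theta$-sum dyadically into $|\theta|\le\langle\xi\rangle/2$ and $|\theta|>\langle\xi\rangle/2$. The paper avoids this split entirely by applying Peetre's inequality $\langle\xi+\nu\rangle^{s}\le\langle\nu\rangle^{|s|}\langle\xi\rangle^{s}$ directly to the Taylor remainder bound, obtaining the uniform estimate
\[
\bigl|\triangle_\xi^\alpha\partial_y^\beta r_M(y,\xi,\theta)\bigr|
\le C\,\langle\theta\rangle^{2M+|\ell|+|\alpha|}\,\langle\xi\rangle^{\ell-|\alpha|-M}
\]
in one stroke; then a single choice of $N$ large makes the $\theta$-sum absolutely convergent. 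Your dyadic argument is valid, but the Peetre route is shorter and sidesteps exactly the bookkeeping you identify as ``the technical heart of the proof.''
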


\begin{rem}\label{REM:torus-pseudo-TP}
We note that if $T$ in \eqref{EQ:torus-TP-defT}
is a pseudo-differential operator
with phase $\phi(x,\xi)=x\cdot\xi$ and amplitude
$a(x,y,\xi)=a(x,\xi)$ independent of $y$, then the 
asymptotic expansion formula of two pseudo-differential 
operators $T\circ P$ becomes
$$
  c(x,z,\xi) \sim \sum_{\alpha\geq 0}
  \frac{1}{\alpha!}\ a(x,\xi)\ (-D_z)^{(\alpha)}
  \triangle_\xi^\alpha p(z,\xi).
$$
This is another representation for the composition
compared to Theorem \ref{THM:torus-compositions},
but the amplitude of the composition here is a
compound symbol.
\end{rem}

\begin{proof}
Let us calculate the composition $TP$:
\begin{eqnarray*}
  TPu(x)
  & = & \sum_{\xi\in\Zn} \int_{\Tn}
  {\rm e}^{{\rm i}(\phi(x,\xi)-y\cdot \xi)}\ a(x,y,\xi)\ Pu(y)
  \dslash y \\
  & = & \sum_{\xi\in\Zn} \int_{\Tn}
  {\rm e}^{{\rm i}(\phi(x,\xi)-y\cdot \xi)}\ a(x,y,\xi)
  \sum_{\eta\in\Zn} \int_{\Tn}
  {\rm e}^{{\rm i}(y-z)\cdot\eta}\ p(y,\eta)\ u(z)
  \dslash z  \dslash y \\
  & = & \sum_{\xi\in\Zn} \int_{\Tn}
  {\rm e}^{{\rm i}(\phi(x,\xi)-z\cdot\xi)}\ c(x,z,\xi)\ u(z)
  \dslash z,
\end{eqnarray*}
where
$$
  c(x,z,\xi)
  = \sum_{\eta\in\Zn} \int_{\Tn}
  {\rm e}^{{\rm i}(y-z)\cdot(\eta-\xi)}\ a(x,y,\xi)\ p(y,\eta)
  \dslash y.
$$
Denote $\theta := \eta-\xi$,
so that by the discrete Taylor formula as in Theorem 
\ref{THM:torus-Taylor-thm},
we formally get
\begin{eqnarray*}
  c(x,z,\xi)
  & \sim &
  \sum_{\theta\in\Zn} \int_{\Tn}
  {\rm e}^{{\rm i}(y-z)\cdot\theta}\ a(x,y,\xi)\ \sum_{\alpha\geq 0}
  \frac{1}{\alpha!}\ \theta^{(\alpha)}\ \triangle_\xi^\alpha p(y,\xi)
  \dslash y \\
  & = & \sum_{\alpha\geq 0} \frac{1}{\alpha!}
  \sum_{\theta\in\Zn} \int_{\Tn}
  \theta^{(\alpha)}\ {\rm e}^{{\rm i}(y-z)\cdot\theta}\ a(x,y,\xi)
  \ \triangle_\xi^{\alpha} p(y,\xi)
  \dslash y \\
  & = & \sum_{\alpha\geq 0} \frac{1}{\alpha!}
  \ (-D_y)^{(\alpha)} \left( a(x,y,\xi)
    \ \triangle_\xi^{\alpha} p(y,\xi) \right)|_{y=z}.
\end{eqnarray*}
Now we have to justify the asymptotic expansion.
First we take a discrete Taylor expansion and using
Theorem \ref{THM:torus-Taylor-thm} we obtain
$$
  p(y,\xi+\theta) = \sum_{|\omega| < M} 
  \frac{1}{\omega!}\ \theta^{(\omega)}
  \ \triangle_\xi^\omega p(y,\xi)
  + r_M(y,\xi,\theta).
$$
Let $Q(\theta) = \{ \nu\in\Zn:
\ |\nu_j|\leq |\theta_j| \textrm{ for all } j=1,\ldots,n\}$
as in Theorem \ref{THM:torus-Taylor-thm}.
Then
\begin{eqnarray*}
  \left| \triangle_\xi^\alpha \partial_y^\beta r_M(y,\xi,\theta)  \right|
  & \leq & C \ \langle\theta\rangle^M
  \ \max_{\substack{|\omega|=M\\ \nu\in Q(\theta)}}
  \left| \triangle_\xi^{\alpha+\omega}\partial_y^\beta p(y,\xi+\nu) \right|\\
  & \leq & C \ \langle\theta\rangle^{M}
  \ \max_{\nu\in Q(\theta)} \langle\xi+\nu\rangle^{\ell-|\alpha|-M} \\
  & \leq & C \ \langle\theta\rangle^{M}
  \ \max_{\nu\in Q(\theta)} \langle\nu\rangle^{\left|\ell-|\alpha|-M\right|}
  \ \langle\xi\rangle^{\ell-|\alpha|-M} \\
  & \leq & C \ \langle\theta\rangle^{2M+|\ell|+|\alpha|}
  \ \langle\xi\rangle^{\ell-|\alpha|-M}.
\end{eqnarray*}
The corresponding remainder term in the asymptotic expansion
of $c(x,z,\xi)$ is
\begin{eqnarray*}
  R_M(x,z,\xi)
  & = & \sum_{\theta\in\Zn} \int_{\Tn} {\rm e}^{{\rm i}(y-z)\cdot\theta}
  \ a(x,y,\xi)\ r_M(y,\xi,\theta) \dslash y \\
  & = & \sum_{\theta\in\Zn} \int_{\Tn}
  {\rm e}^{{\rm i}(y-z)\cdot\theta}\ \langle\theta\rangle^{-2N}
  \ (I-\Lap_y)^N \left[ a(x,y,\xi)\ r_M(y,\xi,\theta) \right]\dslash y,
\end{eqnarray*}
where we integrated by parts exploiting that
$(I-\Lap_y){\rm e}^{{\rm i}(y-z)\cdot\theta}
= \langle\theta\rangle^2\ {\rm e}^{{\rm i}(y-z)\cdot\theta}$,
where $\Lap_y$ is the Laplacian with respect to $y$.
Thus we get the estimate
\begin{eqnarray*}
  \left| \triangle_\xi^\alpha \partial_x^\beta \partial_z^\gamma R_M(x,z,\xi)
  \right| & \leq & C\ \sum_{\theta\in\Zn}
  \langle\theta\rangle^{|\gamma| - 2N + 2M + |\ell| + |\alpha|}
  \ \langle\xi\rangle^{m+\ell-M},
\end{eqnarray*}
and we take $N\in\Bbb N$ so large that
this sum over $\theta$ converges.
Hence
$$
  \left| \triangle_\xi^\alpha \partial_x^\beta \partial_z^\gamma
    R_M(x,z,\xi) \right|
  \leq C\ \langle\xi\rangle^{m+\ell-M}.
$$
This completes the proof of the first part of the theorem.
Finally, we assume that 
$a\in S^m_{\rho,\delta}(\Tn\times\Tn\times\Zn)$.
Then also the terms in the asymptotic expansion and 
the remainder $R_M$ have corresponding decay properties in 
the $\xi$-differences, leading to the amplitude 
$c\in S^m_{\rho,\delta}(\Tn\times\Tn\times\Zn)$.
This completes the proof.
\end{proof}

We now formulate the theorem about compositions of
operators in the different order.

\begin{theorem}[Composition $P(X,D)\circ T$]\label{THM:torus-PT}
Let $\phi:\Rn\times\Zn\to\R$ be such that function
$x\mapsto \erm^{\irm\phi(x,\xi)}$ 
is $2\pi$-periodic for
all $\xi\in\Zn$.
Let $T:C^\infty(\Tn)\to\Dcal'(\Tn)$ such that
$$
  Tu(x) := \sum_{\xi\in\Zn} \int_{\Tn}
  {\rm e}^{{\rm i}(\phi(x,\xi)-y\cdot\xi)}
  \ a(x,y,\xi)\ u(y)
  \dslash y,
$$
where $a\in C^\infty(\Tn\times\Tn\times\Zn)$ 
satisfies
$$
  \left|\partial_x^\alpha\partial_y^\beta a(x,y,\xi) \right|
  \leq C_{\alpha\beta m}\ \langle\xi\rangle^m
$$
for every $x,y\in\Tn$, $\xi\in\Zn$ and
$\alpha,\beta\in\N_0^n$.
Assume that
for some $C>0$ we have
\begin{equation}\label{EQ:torus-compPT-phi}
  C^{-1}\ \langle\xi\rangle
  \leq \langle\nabla_x\phi(x,\xi)\rangle
  \leq C\ \langle\xi\rangle
\end{equation}
for every $x\in\Tn$, $\xi\in\Zn$,
and that
\begin{equation}\label{EQ:torus-compPT-phi2}
  \left|\partial_x^\alpha\phi(x,\xi)\right| \leq C_\alpha\ \langle\xi\rangle,
  \quad
  \left|\partial_x^\alpha \triangle_\xi^\beta \phi(x,\xi) \right|
  \leq C_{\alpha\beta}
\end{equation}
for every $x\in\Tn$, $\xi\in\Zn$ and
$\alpha,\beta\in\N_0^n$ with $|\beta|=1$.
Let $\widetilde{p}\in S^\ell(\Tn\times\Zn)$ be a toroidal symbol and
let $P=\widetilde{p}(X,D)$ be the corresponding pseudo-differential 
operator. Let $p(x,\xi)$ denote an extension 
of $\widetilde{p}(x,\xi)$ to a symbol
in $S^\ell(\Tn\times\Rn)$ as given in Theorem
\ref{THM:torus-extend-symbols}.
Then
$$
  PTu(x) = \sum_{\xi\in\Zn} \int_{\Rn}
  {\rm e}^{{\rm i}(\phi(x,\xi)-z\cdot\xi)}
  \ c(x,z,\xi)\ u(z)
  \dslash z,
$$
where
$$
  \left| \partial_x^\alpha\partial_z^\beta c(x,z,\xi) \right|
  \leq C_{\alpha\beta}\ \langle\xi\rangle^{m+\ell}
$$
for every $x,z\in\Tn$, $\xi\in\Zn$ and
$\alpha,\beta\in\Bbb N_0^n$.
Moreover,
\begin{equation}\label{EQ:torus-PT-asympt}
 c(x,z,\xi) \sim \sum_{\alpha\geq 0}
  \frac{{\rm i}^{-|\alpha|}}{\alpha!}
    \ \partial_\eta^\alpha p(x,\eta)|_{\eta=\nabla_x\phi(x,\xi)}
    \ \partial_y^\alpha \left[
      {\rm e}^{{\rm i}\Psi(x,y,\xi)} a(y,z,\xi) \right] |_{y=x},
\end{equation} 
where
$$
  \Psi(x,y,\xi) := \phi(y,\xi) - \phi(x,\xi) + 
  (x-y)\cdot\nabla_x\phi(x,\xi).
$$
\end{theorem}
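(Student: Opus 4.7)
The plan is to reduce the composition $PT$ to a classical Euclidean Fourier integral operator composition, by first lifting $\widetilde p$ to an Euclidean symbol, then performing an affine shift in the dual variable that moves the critical point of the new phase to the origin, and finally extracting the asymptotic series via a Taylor expansion of the shifted symbol. The $2\pi$-periodicity in $y$ of both $a(y,z,\xi)$ and $y\mapsto\erm^{\irm\phi(y,\xi)}$ will allow free passage between the toroidal and Euclidean representations throughout.

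Concretely, I would proceed as follows. By Theorem \ref{THM:torus-extend-symbols}, extend $\widetilde p$ to a symbol $p\in S^\ell(\Tn\times\Rn)$; by Remark \ref{REM:mclean}, $\widetilde p(X,D)$ and the Euclidean $p(X,D)$ agree on $C^\infty(\Tn)$. Substituting the definition of $T$ into $PTu$, one sees that for each fixed $z\in\Tn$ and $\xi\in\Zn$, with $x$ regarded as a parameter, the operator $P$ acts in the $y$-variable on the periodic function $F_{x,\xi,z}(y):=\erm^{\irm[\phi(y,\xi)-\phi(x,\xi)]}a(y,z,\xi)$, and $c(x,z,\xi)$ equals $(PF_{x,\xi,z})(x)$. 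Unfolding the toroidal representation via Poisson summation (Theorem \ref{THM:torus-Poisson}) — equivalently, via the periodization identity of Corollary \ref{COR:pseudodifferentialperiodization} — rewrites this as the Euclidean oscillatory integral
\begin{equation*}
  c(x,z,\xi) = \int_{\Rn}\int_{\Rn} \erm^{\irm[(x-y)\cdot\eta+\phi(y,\xi)-\phi(x,\xi)]}\,p(x,\eta)\,a(y,z,\xi)\,\dslash y\,\drm\eta.
\end{equation*}
Now perform the affine shift $\eta=\zeta+\nabla_x\phi(x,\xi)$, after which the phase becomes $(x-y)\cdot\zeta+\Psi(x,y,\xi)$, with $\Psi$ vanishing to second order at $y=x$. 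Taylor expand $\zeta\mapsto p(x,\zeta+\nabla_x\phi(x,\xi))$ around $\zeta=0$. For each monomial $\zeta^\alpha$ use the identity $\zeta^\alpha\erm^{\irm(x-y)\cdot\zeta}=\irm^{|\alpha|}\partial_y^\alpha\erm^{\irm(x-y)\cdot\zeta}$ and integrate by parts in $y$, producing the factor $(-1)^{|\alpha|}\irm^{|\alpha|}=\irm^{-|\alpha|}$; the $\zeta$-integral then collapses to $\delta(x-y)$, reducing the $y$-integral to evaluation at $y=x$. This yields precisely the $\alpha$ term of the claimed expansion \eqref{EQ:torus-PT-asympt}.

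The symbolic bounds for $c$ and the asymptotic expansion both require careful size control. For the term-by-term estimates, observe that $\partial_y^\beta\Psi(x,y,\xi)|_{y=x}$ vanishes for $|\beta|\leq 1$ and equals $\partial_y^\beta\phi(y,\xi)|_{y=x}$ for $|\beta|\geq 2$; hence by \eqref{EQ:torus-compPT-phi2} and Fa\`a di Bruno one has $|\partial_y^\alpha(\erm^{\irm\Psi}a)(x,x,\xi)|\lesssim\langle\xi\rangle^{m+\lfloor|\alpha|/2\rfloor}$, while \eqref{EQ:torus-compPT-phi} gives $|\partial_\eta^\alpha p(x,\nabla_x\phi(x,\xi))|\lesssim\langle\xi\rangle^{\ell-|\alpha|}$, so the $\alpha$ term decays like $\langle\xi\rangle^{m+\ell-|\alpha|/2}$. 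The main technical obstacle will be the remainder: after writing the Taylor remainder $R_N$ in integral form and inserting it back into the oscillatory integral, one must show its contribution to $c$ is $O(\langle\xi\rangle^{m+\ell-N/2})$ in order to validate the asymptotic expansion. This requires nested integrations by parts — one in $y$ with the operator $\langle\zeta\rangle^{-2}(1-\Lap_y)$ to render the $\zeta$-integral absolutely convergent, and one in $\zeta$ to gain decay away from $y=x$. Here hypothesis \eqref{EQ:torus-compPT-phi} is essential: it guarantees $\langle\zeta+\nabla_x\phi(x,\xi)\rangle\asymp\max(\langle\zeta\rangle,\langle\xi\rangle)$, preserving the symbol estimates for $p$ after the shift, while \eqref{EQ:torus-compPT-phi2} controls the $y$-derivatives of $\Psi$ uniformly in $\xi$. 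A secondary subtlety, needed already at the first step, is justifying the unfolding from the toroidal to the Euclidean representation in the oscillatory sense; under the polynomial $\xi$-bounds on $a$, this is made rigorous by applying Corollary \ref{COR:pseudodifferentialperiodization} in the parameter $y$ and interpreting the resulting integrals as iterated oscillatory integrals.
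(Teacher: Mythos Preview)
Your overall strategy --- extend $\widetilde p$ to a Euclidean symbol, Taylor-expand at $\eta=\nabla_x\phi(x,\xi)$, and control the remainder by integration by parts --- is exactly the paper's, and your term-by-term estimates and the remainder heuristics are on the right track. However, there is a genuine gap at the unfolding step which you flag as a ``secondary subtlety'' but do not actually resolve.

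The formula
\[
  c(x,z,\xi)=\int_{\Rn}\int_{\Rn}\erm^{\irm[(x-y)\cdot\eta+\phi(y,\xi)-\phi(x,\xi)]}\,p(x,\eta)\,a(y,z,\xi)\,\dslash y\,\drm\eta
\]
is not a legitimate Euclidean oscillatory integral: $a(\cdot,z,\xi)$ is $2\pi$-periodic in $y$, so the $y$-integral over $\Rn$ does not converge in any oscillatory sense. Interpreted correctly via tempered distributions it simply collapses back to the toroidal sum $\sum_{\eta\in\Zn}\int_{\Tn}\cdots$. Consequently the affine shift $\eta=\zeta+\nabla_x\phi(x,\xi)$ makes $\zeta$ range over the \emph{shifted lattice} $\Zn-\nabla_x\phi(x,\xi)$, not over $\Rn$, and then your key step ``the $\zeta$-integral collapses to $\delta(x-y)$'' fails: the sum $\sum_{\zeta}\erm^{\irm(x-y)\cdot\zeta}$ is a Dirac comb in $x-y$, and pairing that comb with $\partial_y^\alpha\bigl[\erm^{\irm\Psi}a\bigr]$ over $y\in\Rn$ diverges, because $\erm^{\irm\Psi(x,y,\xi)}$ is not $2\pi$-periodic in $y$ (only $\erm^{\irm\phi}$ is). Neither Theorem~\ref{THM:torus-Poisson} nor Corollary~\ref{COR:pseudodifferentialperiodization} supplies the passage you need: the former periodizes Schwartz functions, and the latter periodizes in the variable in which the operator acts, whereas what you need is to \emph{unfold} the $y$-integral from $\Tn$ to $\Rn$.

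The paper's remedy is to first insert a cutoff $\chi(x,y)\in C^\infty(\Tn\times\Tn)$, equal to $1$ near the diagonal and with $\chi(x,\cdot)|_{V}$ compactly supported in an open cube $V\subset\Rn$ of side $2\pi$ centred at $x$. The off-diagonal piece $c^{I}$ (carrying $1-\chi$) is shown to be smoothing by discrete summation by parts in $\eta$ (using $|\erm^{\irm(x_j-y_j)}-1|\geq c>0$ on $\supp(1-\chi)$) combined with integration by parts in $y$ using \eqref{EQ:torus-compPT-phi}. On the near-diagonal piece $c^{II}$ the integrand now has $y$-support in $V$, so Corollary~\ref{COR:torus-period-restr} legitimately replaces $\sum_{\eta\in\Zn}\int_{\Tn}$ by $\int_{\Rn}\int_{V}$; only then can one shift $\eta$, Taylor-expand $p$, and evaluate at $y=x$ as you describe. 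The remainder is subsequently handled by a further splitting $|\theta|\lessgtr\epsilon\langle\xi\rangle$, with an $L_\theta$-type integration by parts on the small-$\theta$ region and an $L_y$-type one on the large-$\theta$ region. Once you insert the cutoff, your plan coincides with the paper's proof.
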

Let us make some remarks about quantities appearing in the
asymptotic extension formula \eqref{EQ:torus-PT-asympt}.
It is geometrically reasonable to evaluate the symbol 
$\widetilde{p}(x,\xi)$
at the real flow generated by the phase function $\phi$
of the Fourier series operator $T$. 
This is the main complication compared with pseudo-differential 
operators for which we have Proposition \ref{PROP:torus-WF-pseudo}.
However, although a-priori
symbol $\widetilde{p}$ is defined only
on $\Tn\times\Zn$, we can still extend it to
a symbol $p(x,\xi)$ on $\Tn\times\Rn$ by Theorem
\ref{THM:torus-extend-symbols}, so that 
the restriction 
$\partial_\eta^\alpha p(x,\eta)|_{\eta=\nabla_x\phi(x,\xi)}$
makes sense. We also note that function $\Psi(x,y,\xi)$
can not be in general considered as a function on
$\Tn\times\Tn\times\Zn$ because it may not be periodic in
$x$ and $y$. However, we can still observe that the derivatives
$\partial_y^\alpha \left[
      {\rm e}^{{\rm i}\Psi(x,y,\xi)} a(y,z,\xi) \right] |_{y=x}$
are periodic in $x$ and $z$, so all terms in the right hand side of
\eqref{EQ:torus-PT-asympt} are well-defined functions on
$\Tn\times\Tn\times\Zn$.

Finally, we note that if $\phi\in S^1_{\rho,\delta}(\Rn\times\Rn)$,
$\widetilde{p}\in S^\ell_{\rho,\delta}(\Tn\times\Zn)$,
$a\in S^m_{\rho,\delta}(\Tn\times\Tn\times\Zn)$, and
$0\leq\delta<\rho\leq 1$, then we also have
$c\in S^{m+\ell}_{\rho,\delta}(\Tn\times\Tn\times\Zn)$.
\begin{proof}
To simplify the notation, let us drop writing tilde on $p$,
and denote both symbols $\widetilde{p}$ and $p$ by the 
same letter $p$. There should be no confusion since they
coincide on $\Tn\times\Zn$.
Let $P=p(X,D)$.
We can write
\begin{eqnarray*}
  PTu(x)
  & = & \sum_{\eta\in\Zn} \int_{\Tn}
  {\rm e}^{{\rm i}(x-y)\cdot\eta}\ p(x,\eta)\ Tu(y)\dslash y \\
  & = & \sum_{\eta\in\Zn} \int_{\Tn}
  {\rm e}^{{\rm i}(x-y)\cdot\eta}\ p(x,\eta)
  \sum_{\xi\in\Zn} \int_{\Tn}
  {\rm e}^{{\rm i}(\phi(y,\xi)-z\cdot\xi)}\ a(y,z,\xi)\ u(z)
  \dslash z\dslash y \\
  & = & \sum_{\xi\in\Zn} \int_{\Tn}
  {\rm e}^{{\rm i}(\phi(x,\xi)-z\cdot\xi)}\ c(x,z,\xi)\ u(z)
  \dslash z,
\end{eqnarray*}
where
$$
  c(x,z,\xi) = \sum_{\eta\in\Zn} p(x,\eta) \int_{\Tn}
  {\rm e}^{{\rm i}(\phi(y,\xi)-\phi(x,\xi)+(x-y)\cdot\eta)}
  \ a(y,z,\xi)\dslash y.
$$
Let us fix some $x\in\Rn$, with corresponding equivalence
class $[x]\in\Tn$ which we still denote by $x$.
Let $V\subset\Rn$ be an 
open cube
with side length equal to $2\pi$ centred at $x$.
Let $\chi=\chi(x,y)\in C^\infty(\Tn\times\Tn)$ be such that
$0\leq\chi\leq 1$,
$\chi(x,y)=1$ for $|x-y|<\kappa$ for some sufficiently
small $\kappa>0$, and 
such that $\supp\chi(x,\cdot)\cap \overline{V} \subset V$.
The last condition means that 
$\chi(x,\cdot)|_V\in C_0^\infty(V)$ is
supported away from the boundaries of the cube $V$.
Let
\[
\begin{aligned}
c(x,z,\xi) & = c^I(x,z,\xi)+c^{II}(x,z,\xi), \textrm{ where } \\
c^I(x,z,\xi) & = \sum_{\eta\in\Zn}\int_\Tn 
\erm^{\irm(\phi(y,\xi)-\phi(x,\xi)+(x-y)\cdot\eta)}
\ (1-\chi(x,y))\  a(y,z,\xi)\ p(x,\eta) \dslash y, \\
c^{II}(x,z,\xi) & = \sum_{\eta\in\Zn} \int_\Tn 
\erm^{\irm(\phi(y,\xi)-\phi(x,\xi)+(x-y)\cdot\eta)}\ \chi(x,y)
\ a(y,z,\xi)\ p(x,\eta) \dslash y.
\end{aligned}
\]

{1. Estimate on the support of $1-\chi$.} 
By making a decomposition into cones centred at $x$ viewed
as a point in $\Rn$, it follows that we can assume without
loss of generality that the support of $1-\chi$ is contained
in a set where $C<|x_j-y_j|$, for some
$1\leq j\leq n$. In turn, because of the assumption
on the support of $\chi(x,\cdot)|_V$ it follows that
$C<|x_j-y_j|<2\pi-C$, for some $C>0$.
Now we are going to apply the summation by parts formula
\eqref{EQ:summation-by-parts} to estimate $c^I(x,z,\xi)$. First we
notice that it follows that 
$$\triangle_{\eta_j} \erm^{\irm (x-y)\cdot\eta}=
\erm^{\irm (x-y)\cdot(\eta+e_j)}-\erm^{\irm (x-y)\cdot\eta}=
\erm^{\irm (x-y)\cdot\eta} \p{\erm^{\irm (x_j-y_j)}-1}\not=0
\;\textrm{ on } \supp (1-\chi).$$
Hence by the summation by parts formula 
\eqref{EQ:summation-by-parts} we get that
\begin{equation}\label{EQ:torus-comp-PT1}
  \sum_{\eta\in\Zn}  
 \erm^{\irm (x-y)\cdot\eta}\ p(x,\eta)=
 \p{\erm^{\irm (x_j-y_j)}-1}^{-N_1}
 \sum_{\eta\in\Zn} \erm^{\irm (x-y)\cdot\eta} 
 \ \overline{\triangle_{\eta_j}}^{N_1} p(x,\eta);
\end{equation}
here sum on the right hand side
converges absolutely for large enough $N_1$.
On the other hand, we can integrate by parts with
operator 
\[
^tL_y=\frac{1-\Lap_y}{\jp{\nabla_y\phi(y,\xi)}^2-
\irm\ \Lap_y\phi(y,\xi)},
\]
where $\Lap_y$ is the Laplace operator with 
respect to $y$, and for which we have
$L_y^{N_2}\erm^{\irm\phi(y,\xi)}=\erm^{\irm\phi(y,\xi)}.$ 
Note that in view of
our assumption \eqref{EQ:torus-compPT-phi} on $\phi$, we have
\[
|\jp{\nabla_y\phi(y,\xi)}^2-\irm\ \Lap_y\phi(y,\xi)|\geq
|\jp{\nabla_y\phi(y,\xi)}|^2\geq C_1\jp{\xi}^2.
\]
Therefore,
\begin{multline*}
c^{I}(x,z,\xi) = \sum_{\eta\in\Zn} \int_\Tn 
\erm^{\irm(\phi(y,\xi)-\phi(x,\xi)+x\cdot\eta)}
L_y^{N_2}\left\{
\erm^{-\irm y\cdot\eta}\times \right. \\
\left. \times
 \p{\erm^{\irm (x_j-y_j)}-1}^{-N_1}
 \overline{\triangle_{\eta_j}}^{N_1} p(x,\eta)\
 (1-\chi(x,y))\ a(y,z,\xi) \right\} \dslash y.
\end{multline*}
From the properties of amplitudes, we get 
\begin{equation*}
|c^{I}(x,z,\xi)|  \leq  C\sum_{\eta\in\Zn} 
\int_{V\cap\{2\pi-c>|x_j-y_j|>c\}} \jp{\xi}^{m-2N_2}
\jp{\eta}^{2N_2+\ell-N_1} \dslash y \leq 
C \jp{\xi}^{-N}
\end{equation*}
for all $N$, if we choose large enough $N_2$ and 
then large enough $N_1$.
We can easily see that similar estimates work for
the derivatives of $c^I$, completing the proof on the
support of $1-\chi$.

{2. Estimate on the support of $\chi$.} 
Extending $\widetilde{p}\in S^\ell(\Tn\times\Zn)$ to a symbol in 
$p\in S^\ell(\Tn\times\Rn)$ as given in Theorem
\ref{THM:torus-extend-symbols}, we will make its usual Taylor 
expansion at $\eta=\nabla_x\phi(x,\xi)$, so that we have 
\begin{equation}\label{EQ:ralpha}
\begin{aligned}
& p(x,\eta)  = \sum_{|\alpha|<M} 
\frac{(\eta-\nabla_x\phi(x,\xi))^\alpha}{\alpha !}
\ \partial_\xi^\alpha p(x,\nabla_x\phi(x,\xi)) + \\
& \qquad \qquad + \sum_{|\alpha|=M}
C_\alpha\ {(\eta-\nabla_x\phi(x,\xi))^\alpha} \
r_\alpha(x,\xi,\eta-\nabla_x\phi(x,\xi)), \\
& r_\alpha(x,\xi,\eta-\nabla_x\phi(x,\xi))  = \int_0^1 (1-t)^{M_1}
\ \partial_\xi^{\alpha} p(x,t\eta+(1-t)
\nabla_x\phi(x,\xi)) \drm t. 
\end{aligned}
\end{equation}
Then
\[
\begin{aligned}
c^{II}(x,z,\xi) = 
\sum_{|\alpha|<M} \frac{1}{\alpha!}
 \ c_\alpha(x,z,\xi)+\sum_{|\alpha|=M} C_\alpha R_\alpha(x,z,\xi),
\end{aligned}
\]
where
\[
\begin{aligned}
R_{\alpha}(x,z,\xi) & = \sum_{\eta\in\Zn} \int_\Tn 
{\rm e}^{{\rm i}(\phi(y,\xi)-\phi(x,\xi)+(x-y)\cdot\eta)}
(\eta-\nabla_x\phi(x,\xi))^{\alpha}\times \\
 & \qquad \qquad \times \chi(x,y)\
r_\alpha(x,\xi,\eta-\nabla_x\phi(x,\xi))\ a(y,z,\xi) \dslash y, \\
c_{\alpha}(x,z,\xi) & = \sum_{\eta\in\Zn}
\int_\Tn 
{\rm e}^{{\rm i}(\phi(y,\xi)-\phi(x,\xi)+(x-y)\cdot\eta)}
 (\eta-\nabla_x\phi(x,\xi))^{\alpha} \times \\
& \quad\quad \times\chi(x,y)\
 a(y,z,\xi)\
\partial_\xi^\alpha
p(x,\nabla_x\phi(x,\xi))  \dslash y.
\end{aligned}
\]
Now using Corollary \ref{COR:torus-period-restr} 
we can calculate
$$
\begin{aligned}
c_\alpha(x,z,\xi) & = 
   \partial_\xi^\alpha p(x,\nabla_x\phi(x,\xi))\ 
  \br{D_y-\nabla_x\phi(x,\xi)}^\alpha
  \b{{\rm e}^{{\rm i}(\phi(y,\xi)-\phi(x,\xi))}\
   \chi(x,y)\ a(y,z,\xi)}|_{y=x} \\
   & = \partial_\xi^\alpha p(x,\nabla_x\phi(x,\xi)) \times \\
   & \qquad \times
   \int_\Rn \int_V \erm^{\irm (x-y)\cdot\eta}\
   \br{\eta-\nabla_x\phi(x,\xi)}^\alpha\
   {\rm e}^{{\rm i}(\phi(y,\xi)-\phi(x,\xi))}\
   \chi(x,y)\ a(y,z,\xi) \dslash y \drm\eta \\
  & = \partial_\xi^\alpha p(x,\nabla_x\phi(x,\xi))\ D_y^{\alpha}
  \br{{\rm e}^{{\rm i}(\phi(y,\xi)-\phi(x,\xi)
   +(x-y)\cdot\nabla_x\phi(x,\xi)}\chi(x,y)\ a(y,z,\xi)}|_{y=x}.
\end{aligned}
$$
Since $\chi$ is identically equal to one for $y$ near $x$,
we obtain the asymptotic formula \eqref{EQ:torus-PT-asympt},
once the remainders $R_\alpha$ are analyzed.

3. Estimates for the remainder. 
Let us first write the remainder in the form
\begin{equation}\label{EQ:torus-PT-rmd1}
\begin{aligned}
R_{\alpha}(x,z,\xi) & = \sum_{\eta\in\Zn} \int_\Tn 
{\rm e}^{{\rm i}(x-y)\cdot\eta}\
r_\alpha(x,\xi,\eta-\nabla_x\phi(x,\xi)) \times \\
 & \qquad \qquad \times 
(\eta-\nabla_x\phi(x,\xi))^{\alpha}\ \chi(x,y)\ g(y)  
 \dslash y, 
\end{aligned}
\end{equation} 
with 
$$
 g(y)={\rm e}^{{\rm i}(\phi(y,\xi)-\phi(x,\xi))}\
 \chi(x,y)\ a(y,z,\xi),
$$
which is a $2\pi$-periodic function of $y$. 
Now, we can use Corollary \ref{COR:torus-period-restr} 
to conclude that 
$R_\alpha(x,z,\xi)$ in \eqref{EQ:torus-PT-rmd1}
is equal to the periodization with respect to $x$ in the form
$R_{\alpha}(x,z,\xi)=\Pcal_x S_{\alpha}(x,z,\xi)$, where
\begin{equation}\label{EQ:torus-PT-rmd2}
 \begin{aligned}
S_{\alpha}(x,z,\xi) & = 
r_\alpha(x,\xi,D_y-\nabla_x\phi(x,\xi))  
\ (D_y-\nabla_x\phi(x,\xi))^{\alpha} g(y)|_{y=x} \\
& = 
 \int_\Rn \int_V 
{\rm e}^{{\rm i}(x-y)\cdot\eta}
\ r_\alpha(x,\xi,\eta-\nabla_x\phi(x,\xi)) \times \\
 & \qquad \qquad \times 
(\eta-\nabla_x\phi(x,\xi))^{\alpha}\ \chi(x,y)\ g(y)  
 \dslash y \drm\eta \\
& = \int_\Rn\int_V \erm^{\irm(x-y)\cdot\theta}
\erm^{\irm\Psi(x,y,\xi)}\ \theta^{\alpha}\ \chi(x,y)\
a(y,z,\xi)\ r_\alpha(x,\xi,\theta) \dslash y 
\drm\theta,
\end{aligned}
\end{equation} 
where we changed the variables by 
$\theta=\eta-\nabla_x\phi(x,\xi)$ and where 
$$
 \Psi(x,y,\xi) := \phi(y,\xi) - \phi(x,\xi) + 
  (x-y)\cdot\nabla_x\phi(x,\xi)
$$
and 
$$
r_\alpha(x,\xi,\theta)  = \int_0^1 (1-t)^{M_1}\
\partial_\xi^{\alpha} p(x,\nabla_x\phi(x,\xi)+t\theta) \drm t. 
$$
Since the periodization
$\Pcal_x$ does not change the orders in $z$ and $\xi$ it is
enough to derive the required estimates for 
$S_{\alpha}(x,z,\xi)$.

Let $\rho\in
C_0^\infty(\Rn)$ be such that $\rho(\theta)=1$ for 
$|\theta|<\epsilon/2$, and
$\rho(\theta)=0$ for $|\theta|>\epsilon$,
for some small $\epsilon>0$ to be chosen later. We decompose
\begin{equation}\label{EQ:Ralphas}
\begin{aligned}
S_{\alpha}(x,z,\xi) & = S_\alpha^I(x,z,\xi)+
 S_\alpha^{II}(x,z,\xi), \textrm{ where } \\
S_\alpha^I(x,z,\xi) & = \int_\Rn\int_v \erm^{\irm(x-y)\cdot\theta}
\ \rho\left(\frac{\theta}{\jp{\xi}}\right)\times
\\ & \qquad\qquad \times r_\alpha(x,\xi,\theta)\ D_y^{\alpha} \left[
\erm^{\irm\Psi(x,y,\xi)}\
\chi(x,y)\ a(y,z,\xi) \right] \dslash y 
\drm\theta \\
S_\alpha^{II}(x,z,\xi) & 
= \int_\Rn\int_V \erm^{\irm(x-y)\cdot\theta} \left(1-\rho\left(
\frac{\theta}{\jp{\xi}}\right)\right)\times \\
& \qquad \qquad \times r_\alpha(x,\xi,\theta)\
D_y^{\alpha}\left[ 
\erm^{\irm\Psi(x,y,\xi)}\ \chi(x,y)\ a(y,z,\xi)
\right] \dslash y \drm\theta.
\end{aligned}
\end{equation}

{3.1. Estimate for $|\theta|\leq \epsilon\jp{\xi}.$} For
sufficiently small $\epsilon>0$, for any $0\leq t\leq 1$,
$\jp{\nabla_x\phi(x,\xi)+t\theta}$ and $\jp{\xi}$ are
equivalent. Indeed, if we use the inequalities
\[
\jp{z}\leq 1+|z|\leq \sqrt{2}\jp{z},
\]
we get
\[
\begin{aligned}
\jp{\nabla_x\phi(x,\xi)+t\theta}\leq & (C_2\sqrt{2}+\epsilon)\jp{\xi} \\
\sqrt{2}\jp{\nabla_x\phi(x,\xi)+t\theta}\geq & 1+
|\nabla_x\phi|-|\theta|
\geq \jp{\nabla_x\phi}-|\theta|\geq (C_1-\epsilon)\jp{\xi},
\end{aligned}
\]
so we will take $\epsilon<C_1.$ This equivalence means that for
$|\theta|\leq\epsilon\jp{\xi}$, function
$r_\alpha(x,\xi,\theta)$ is dominated by 
$\jp{\xi}^{\ell-|\alpha|}$ since $p\in S^\ell(\Tn\times\Rn).$
We will need two auxiliary estimates. The first
estimate

\begin{equation}\label{eq:estr}
\begin{aligned}
\left|\partial_\theta^{\gamma}\left( \rho\left(
\frac{\theta}{\jp{\xi}}\right)
r_\alpha(x,\xi,\theta)\right)\right| & \leq
C\sum_{\delta\leq\gamma} \left|\partial_\theta^\delta
\rho\left(\frac{\theta}{\jp{\xi}}\right)
\partial_\theta^{\gamma-\delta}
r_\alpha(x,\xi,\theta)\right| \\
& \leq C\sum_{\delta\leq\gamma}\jp{\xi}^{-|\delta|}
\jp{\xi}^{\ell-|\alpha|-|\gamma-\delta|} \\
& \leq C\jp{\xi}^{\ell-|\alpha|-|\gamma|}
\end{aligned}
\end{equation}
follows from the properties of $r_\alpha$. Before we state the
second estimate, let us analyze the structure of
$\partial_y^\alpha \erm^{\irm\Psi(x,y,\xi)}.$ It has at most $|\alpha|$
powers of terms $\nabla_y\phi(y,\xi)-\nabla_x\phi(x,\xi)$,
possibly also multiplied by at most $|\alpha|$ higher order
derivatives $\partial_y^\delta\phi(y,\xi)$.
The product of the terms of the 
form $\nabla_y\phi(y,\xi)-\nabla_x\phi(x,\xi)$
can be estimated
by $C(|y-x|\jp{\xi})^{|\alpha|}$. The terms
containing no difference $\nabla_y\phi(y,\xi)-\nabla_x\phi(x,\xi)$
are the products of at most $|\alpha|/2$ terms of the type
$\partial_y^\delta\phi(y,\xi)$, and the product
of all such terms can be estimated by
$C\jp{\xi}^{|\alpha|/2}$. Altogether,
we obtain the estimate
\[
|\partial_y^\alpha \erm^{\irm\Psi(x,y,\xi)}|\leq C_\alpha
\ (1+\jp{\xi}|y-x|)^{|\alpha|}\ \jp{\xi}^{|\alpha|/2}.
\]
The second auxiliary estimate now is
\begin{equation}\label{eq:ests}
\left|D_y^{\alpha}\left[ \erm^{\irm\Psi(x,y,\xi)}\ \chi(x,y)
\ a(y,z,\xi)\right] \right|
\\ \leq C_\alpha\
(1+\jp{\xi}|y-x|)^{|\alpha|}\ \jp{\xi}^{\frac{|\alpha|}{2}+m}.
\end{equation}
Now we are ready to prove a necessary estimate for
$S^I_{\alpha}(x,z,\xi).$ Let
\[
L_\theta=\frac{(1-\jp{\xi}^2\Lap_\theta)}{1+\jp{\xi}^2|x-y|^2},\;\;
L_\theta^N \erm^{\irm(x-y)\cdot\theta}=\erm^{\irm(x-y)\cdot\theta},
\]
where $\Lap_\theta$ is the Laplace operator with respect to
$\theta$.
Integrations by parts with $L_\theta$ yield
\[
\begin{aligned}
S^I_{\alpha}(x,z,\xi) = & \int_\Rn\int_V
\frac{\erm^{\irm(x-y)\cdot\theta}}{(1+\jp{\xi}^2 |x-y|^2)^N}
(1-\jp{\xi}^{2}\Lap_\theta)^N \\
& \;\; \left\{ \rho\left(\frac{\theta}{\jp{\xi}}\right)
r_\alpha(x,\xi,\theta)\ D_y^{\alpha}\left[ 
\erm^{\irm\Psi(x,y,\xi)}\ \chi(x,y)\ a(y,z,\xi)
\right]
\right\} \dslash y \drm\theta \\
= & \int_\Rn \int_V 
\frac{\erm^{\irm(x-y)\cdot\theta}}{(1+\jp{\xi}^2 |x-y|^2)^N}
\sum_{{|\gamma|\leq 2N}}
 C_{\gamma} \jp{\xi}^{|\gamma|} \\
& \;\;
\left\{
D_y^{\alpha}\left[ \erm^{\irm\Psi(x,y,\xi)}
\ \chi(x,y)\ a(y,z,\xi)\right]
\partial_\theta^{\gamma}\left(
\rho\left(\frac{\theta}{\jp{\xi}}\right)
r_\alpha(x,\xi,\theta)\right)
\right\} \dslash y \drm\theta.
\end{aligned}
\]
Using estimates (\ref{eq:estr}), (\ref{eq:ests})
and the fact that the
measure of the support of 
function $\theta\mapsto\rho(\theta/\jp{\xi})$ is
estimated by $(\epsilon\jp{\xi})^n$, we obtain the estimate
\[
\begin{aligned}
|S^I_{\alpha}(x,z,\xi)| & \leq
C\sum_{|\gamma|\leq 2N}
\jp{\xi}^{n+|\gamma|+\frac{|\alpha|}{2}+m}
\jp{\xi}^{\ell-|\alpha|-|\gamma|} 
\int_{V} \frac{(1+\jp{\xi}|y-x|)^{|\alpha|}}
{(1+\jp{\xi}^2 |x-y|^2)^N} \dslash y \\
& \leq C\jp{\xi}^{\ell+m+n-\frac{|\alpha|}{2}},
\end{aligned}
\]
if we choose $N$ large enough, e.g. $N\geq M=|\alpha|$. 

Each derivative of $S_{\alpha}^I(x,z,\xi)$ with respect to $x$
or $\xi$ gives an extra power of $\theta$ under the integral. 
Integrating by parts, this
amounts to taking more $y$-derivatives, giving a higher power of
$\jp{\xi}.$ However, this is not a problem if for the estimate
for a given number of derivatives of the remainder
$S_{\alpha}^I(x,z,\xi)$, we choose $M=|\alpha|$
sufficiently large.

{3.2. Estimate for $|\theta|>\epsilon\jp{\xi}.$}
Let us define
\[
\omega(x,y,\xi,\theta)=(x-y)\cdot\theta+\Psi(x,y,\xi)=
(x-y)\cdot(\nabla_x\phi(x,\xi)+\theta)+\phi(y,\xi)-\phi(x,\xi).
\]
From \eqref{EQ:torus-compPT-phi} and
\eqref{EQ:torus-compPT-phi2} we have
\begin{equation}\label{eq:rho}
\begin{aligned}
|\nabla_y\omega| & =|-\theta+\nabla_y\phi-\nabla_x\phi|\leq
2C_2(|\theta|+\jp{\xi}), \\
|\nabla_y\omega| & \geq |\theta|-|\nabla_y\phi-\nabla_x\phi| \geq
\frac{1}{2}|\theta|+\left(\frac{\epsilon}{2}-C_0|x-y|
\right)\jp{\xi}\geq
C(|\theta|+\jp{\xi}),
\end{aligned}
\end{equation}
if we choose $\kappa<\frac{\epsilon}{2C_0},$ 
since $|x-y|<\kappa$ in the support of $\chi$ in $V$
(recall that we were free to choose $\kappa>0$).
Let us denote
\[
\sigma_{\gamma_1}(x,y,\xi):= \erm^{-\irm\Psi(x,y,\xi)}\ D_y^{\gamma_1}
\erm^{\irm\Psi(x,y,\xi)}.
\]
For any $\nu$ we have an estimate
\begin{equation}\label{EQ:torus-PT-def-sig}
|\partial_y^\nu\sigma_{\gamma_1}(x,y,\xi)|
\leq C\jp{\xi}^{|\gamma_1|},
\end{equation} 
because of our assumption \eqref{EQ:torus-compPT-phi2} that
$|\partial_y^\nu\phi(y,\xi)|\leq C_\nu\jp{\xi}.$ For
$M=|\alpha|>\ell$  we also observe that
\begin{equation}\label{eq:rs}
|r_\alpha(x,\xi,\theta)|\leq C_\alpha, \;
|\partial_y^\nu a(y,z,\xi)|\leq
C_\beta\jp{\xi}^{m}.
\end{equation}
Let us take $^tL_y=\irm|\nabla_y\omega|^{-2}\sum_{j=1}^n
(\partial_{y_j}\omega)
\partial_{y_j}.$ It can be shown by induction that operator
$L_y^N$ has the form
\begin{equation}\label{EQ:LNy}
L_y^N=\frac{1}{|\nabla_y\omega|^{4N}}\sum_{|\nu|\leq N}
P_{\nu,N}\partial_y^\nu,\;\; P_{\nu,N}=\sum_{|\mu|=2N}
c_{\nu\mu\delta_j}(\nabla_y\omega)^\mu
\partial_y^{\delta_1}\omega\cdots
\partial_y^{\delta_N}\omega,
\end{equation}
where $|\mu|=2N, |\delta_j|\geq 1, \sum_{j=1}^N |\delta_j|+|\nu|=2N.$
It follows from \eqref{EQ:torus-compPT-phi2} 
and \eqref{eq:rho} that $|P_{\nu,N}|\leq
C(|\theta|+\jp{\xi})^{3N}$, since for all $\delta_j$ we have
$|\partial_y^{\delta_j}\omega|\leq C(|\theta|+\jp{\xi}).$
By the Leibnitz formula we have
\begin{equation}\label{EQ:RII}
\begin{aligned}
& S^{II}_{\alpha}(x,z,\xi) = \\
& = \int_\Rn\int_V \erm^{\irm(x-y)\cdot\theta} \left(1-\rho\left(
\frac{\theta}{\jp{\xi}}\right)\right)
r_\alpha(x,\xi,\theta) \times \\
& \;\;\; \times D_y^{\alpha}\left[
\erm^{\irm\Psi(x,y,\xi)}\ \chi(x,y)\
a(y,z,\xi)\right] \dslash y \drm\theta \\
& = \int_\Rn\int_V \erm^{\irm\omega(x,y,\xi,\theta)}
\left(1-\rho\left(\frac{\theta}{\jp{\xi}}\right)\right) 
r_\alpha(x,\xi,\theta) \times
\\
& \; \; \; \times \sum_{\gamma_1+\gamma_2+\gamma_3=\alpha}
\sigma_{\gamma_1}(x,y,\xi)\ D_y^{\gamma_2}\ \chi(x,y)\
D_y^{\gamma_3}a(y,z,\xi) \dslash y \drm\theta \\
& = \int_\Rn\int_V \erm^{\irm\omega(x,y,\xi,\theta)}
|\nabla_y\omega|^{-4N}\sum_{|\nu|\leq N} P_{\nu,N}(x,y,\xi,\theta)
\left(1-\rho\left(\frac{\theta}{\jp{\xi}}
\right)\right)\times \\
& \;\;\; \times  r_\alpha(x,\xi,\theta)
\sum_{\gamma_1+\gamma_2+\gamma_3=\alpha}
\partial_y^\nu\left(\sigma_{\gamma_1}(x,y,\xi)\
D_y^{\gamma_2}\ \chi(x,y)\
D_y^{\gamma_3}a(y,z,\xi)\right) \dslash y \drm\theta.
\end{aligned}
\end{equation}
It follows now from \eqref{EQ:torus-PT-def-sig} 
and (\ref{eq:rs}) that
\[
\begin{aligned}
|S^{II}_{\alpha}(x,z,\xi)| & \leq C
\int_{|\theta|>\epsilon\jp{\xi}/2} 
(|\theta|+\jp{\xi})^{-N} 
\jp{\xi}^{|\alpha|}
\jp{\xi}^{m} \drm\theta \\
& \leq C \jp{\xi}^{m+|\alpha|+n-N},
\end{aligned}
\]
which yields the desired estimate if we take large enough $N$.
For the derivatives of $S_{\alpha}^{II}(x,z,\xi)$, similar to
Part 3.1 for $S_{\alpha}^I$, we can get extra powers of
$\theta$, which can be taken care of by choosing large $N$. 
The proof of Theorem \ref{THM:torus-PT} is now complete.
\end{proof}

\begin{rem}\label{REM:torus-PT}
We could also use the following asymptotic expansion
for $c$ based on the discrete Taylor expansion from
Theorem \ref{THM:torus-Taylor-thm}:
\begin{eqnarray*}
  c(x,z,\xi)
  & \sim &
  \sum_{\theta\in\Zn} \sum_{\alpha\geq 0}
  \frac{1}{\alpha!}\ \theta^{(\alpha)}
  \left[ \triangle_\omega^\alpha p(x,\omega)
  \right]_{\omega=\nabla_x\phi(x,\xi)}
  \int_{\Tn} {\rm e}^{{\rm i}(\Psi(x,y,\xi)+(x-y)\cdot\theta)}
  \ a(y,z,\xi)\ \dslash y \\
  & = &
  \sum_{\alpha\geq 0}
  \frac{1}{\alpha!}
  \left[ \triangle_\omega^\alpha p(x,\omega)
  \right]_{\omega=\nabla_x\phi(x,\xi)}
  \sum_{\theta\in\Zn} 
  \int_{\Tn} \theta^{(\alpha)}
  \ {\rm e}^{{\rm i}(x-y)\cdot\theta}
  \ {\rm e}^{{\rm i}\Psi(x,y,\xi)}
  \ a(y,z,\xi)\ \dslash y \\
  & = &
  \sum_{\alpha\geq 0}
  \frac{1}{\alpha!}
  \left[ \triangle_\omega^\alpha p(x,\omega)
  \right]_{\omega=\nabla_x\phi(x,\xi)}
  D_y^{(\alpha)}
  \left[ {\rm e}^{{\rm i}\Psi(x,y,\xi)}\ 
  a(y,z,\xi) \right]_{y=x}. 
\end{eqnarray*}
\end{rem}

\section{Conditions for $L^{2}$-boundedness}
\label{SEC:torus-L2}

In this section we will discuss what conditions on 
the toroidal symbol $a$
guarantee the $L^{2}$-boundedness of the corresponding 
pseudo-differential operator
$a(X,D):C^\infty(\Tn)\to \Dcal'(\Tn)$. In Theorem 
\ref{THM:torus-L2-FSOs} we will also prove a result
on the $L^2$-boundedness of Fourier series operators,
and in Remark \ref{REM:torus-Sobolev} we will give its
consequence for the boundedness in Sobolev spaces.

\begin{theorem}\label{TH:L2-pseudos}
Let $a:\Tn\times\Zn\to\C$ be such that 
\begin{equation}\label{EQ:torus-L2-pse}
  \left| \partial_x^\beta a(x,\xi) \right| \leq
  C \quad\textrm{ for all } (x,\xi)\in\Tn\times\Zn,
\end{equation} 
and all $|\beta| \leq n+1$.
Then operator $a(X,D)$ extends to a bounded
operator on $L^{2}(\Tn)$.
\end{theorem}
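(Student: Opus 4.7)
The plan is to reduce the statement to Schur's test on the matrix of Fourier coefficients of $a$ in the $x$-variable. Since no regularity in $\xi$ is assumed, we cannot invoke any oscillatory-integral argument in $\xi$; instead the discreteness of $\Zn$ together with $n+1$ derivatives in $x$ will suffice.

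First I would write the operator in multiplier form, using $\widehat{f} := \FTT f$,
\[
 a(X,D)f(x) = \sum_{\xi\in\Zn} \erm^{\irm x\cdot\xi}\, a(x,\xi)\, \widehat{f}(\xi),
\]
and expand $a$ as a Fourier series in the first variable:
\[
 a(x,\xi) = \sum_{\eta\in\Zn} \widehat{a}(\eta,\xi)\, \erm^{\irm x\cdot\eta},
 \qquad \widehat{a}(\eta,\xi) := \int_\Tn a(x,\xi)\, \erm^{-\irm x\cdot\eta}\dslash x.
\]
Integration by parts in $x$ together with the hypothesis
$|\partial_x^\beta a(x,\xi)|\le C$ for $|\beta|\le n+1$
yields the key estimate
\[
 |\widehat{a}(\eta,\xi)| \le C_n\, \jp{\eta}^{-(n+1)}
 \qquad \text{for all } (\eta,\xi)\in\Zn\times\Zn,
\]
with $C_n$ independent of $\xi$. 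This is the only place where the hypothesis is used.

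Next I would compute the Fourier coefficients of the image. Substituting the expansion above and collecting exponentials,
\[
 \FTT\bigl(a(X,D)f\bigr)(\zeta) = \sum_{\xi\in\Zn} \widehat{a}(\zeta-\xi,\xi)\, \widehat{f}(\xi).
\]
Thus, via Plancherel, the $L^2(\Tn)\to L^2(\Tn)$ norm of $a(X,D)$ equals the $\ell^2(\Zn)\to\ell^2(\Zn)$ norm of the matrix $M(\zeta,\xi) := \widehat{a}(\zeta-\xi,\xi)$. To this matrix I apply Schur's test: the decay estimate gives
\[
 \sup_{\zeta\in\Zn} \sum_{\xi\in\Zn} |M(\zeta,\xi)|
 \le C_n \sup_{\zeta} \sum_{\xi} \jp{\zeta-\xi}^{-(n+1)}
 = C_n \sum_{\eta\in\Zn} \jp{\eta}^{-(n+1)} < \infty,
\]
and the symmetric sum $\sup_\xi \sum_\zeta |M(\zeta,\xi)|$ is bounded by the same quantity. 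Schur's lemma then gives the $\ell^2$-boundedness of $M$, hence the $L^2$-boundedness of $a(X,D)$.

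There is no real obstacle: the only subtlety is to verify that, with merely bounded symbol in $\xi$, the convergence of $\sum_\eta \jp{\eta}^{-(n+1)}$ over the lattice $\Zn$ (which requires exponent strictly greater than $n$) suffices, and this is precisely why $n+1$ derivatives in $x$ appear in the hypothesis. One may also read the argument as first interpreting $a(X,D)$ as a superposition over $\eta\in\Zn$ of the translation-multiplier operators $f\mapsto \sum_\xi \widehat{a}(\eta,\xi)\erm^{\irm x\cdot(\xi+\eta)}\widehat{f}(\xi)$, each of which has operator norm $\sup_\xi |\widehat{a}(\eta,\xi)| \le C_n\jp{\eta}^{-(n+1)}$, so that the triangle inequality together with the summability of $\jp{\eta}^{-(n+1)}$ over $\Zn$ completes the proof.
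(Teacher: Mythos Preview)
Your proof is correct and is essentially the same as the paper's. Both expand $a(x,\xi)$ in a Fourier series in $x$, use the hypothesis on $\partial_x^\beta a$ to get $|\widehat{a}(\eta,\xi)|\le C\jp{\eta}^{-(n+1)}$ uniformly in $\xi$, identify the Fourier coefficients of $a(X,D)f$ as $\sum_{\xi}\widehat{a}(\omega-\xi,\xi)\widehat{f}(\xi)$, and then bound the resulting $\ell^2\to\ell^2$ operator; the paper phrases the final step as the ``discrete Young's inequality'' while you call it Schur's test, but the row- and column-sum bounds invoked are identical.
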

We note that compared with several well-known theorems
on the $L^2$--boundedness of pseudo-differential operators
(see e.g. Calderon and Vaillancourt \cite{CV71}, 
Coifman and Meyer \cite{CM78}, Cordes \cite{Co75}), 
Theorem \ref{TH:L2-pseudos} 
does not require any regularity with respect to the
$\xi$-variable. In fact, the boundedness of all partial
differences of all orders $\geq 1$
with respect to $\xi$ follows automatically from
\eqref{EQ:torus-L2-pse}.

\begin{proof}
We can write
\begin{eqnarray*}
  a(X,D)f(x)
  & = & \sum_{\xi\in\Zn} a(x,\xi)\ \p{\FTT f}(\xi)
  \ {\rm e}^{{\rm i}x\cdot\xi} \\
  & = & \sum_{\xi,\eta\in\Zn} \p{\FTT{a}}(\eta,\xi)
  \ \p{\FTT f}(\xi)
  \ {\rm e}^{{\rm i}x\cdot(\xi+\eta)}\\
  & = & \sum_{\omega\in\Zn} {\rm e}^{{\rm i}x\cdot\omega}
  \sum_{\xi\in\Zn} \p{\FTT a}(\omega-\xi,\xi)
  \ \p{\FTT f}(\xi),
\end{eqnarray*}
where $\p{\FTT{a}}(\eta,\xi)$ is the $\eta^{th}$
Fourier coefficient 
of $a(\cdot,\xi)$.
Here
$\left|\p{\FTT a}(\eta,\xi)\right| \leq C\ 
\langle\eta\rangle^{-n-1}$,
so that
\begin{eqnarray*}
  \| a(X,D) f \|_{L^{2}(\Tn)}^2
  & = & \int_{\Tn} |a(X,D) f(x)|^2\ \dslash x \\
  & = & \sum_{\omega\in\Zn} |\FTT (a(X,D)f)(\omega)|^2 \\
  & = & \sum_{\omega\in\Zn} \left|
    \sum_{\xi\in\Zn} \p{\FTT a}(\omega-\xi,\xi)
    \ \p{\FTT f}(\xi) \right|^2 \\
  &\leq &
  \left( \sup_{\omega\in\Zn} \sum_{\xi\in\Zn}
    \left|\p{\FTT a}(\omega-\xi,\xi)\right|
    \right) \times \\
  & & \quad \times
    \left( \sup_{\xi\in\Zn} \sum_{\omega\in\Zn}
    \left|\p{\FTT a}(\omega-\xi,\xi)\right|
    \right)
    \sum_{\xi\in\Zn} \left|\p{\FTT f}(\xi) \right|^2 \\
  & \leq & C\ \|f\|_{L^{2}(\Tn)}^2
\end{eqnarray*}
by the discrete version of Young's inequality.
\end{proof}

As an extension of this result to the setting of
Fourier series operators, we have the following
$L^2$--boundedness property.

\begin{theorem}\label{THM:torus-L2-FSOs}
Let $T:C^\infty(\Tn)\to \Dcal'(\Tn)$ be defined
by $$Tu(x)=\sum_{k\in\Zn}\ \erm^{\irm\phi(x,k)}\ a(x,k)\ 
\p{\FTT{u}}(k),$$
where $\phi:\Rn\times\Zn\to\R$ and
$a:\Tn\times\Zn\to\C$.
Assume that function
$x\mapsto{\rm e}^{\irm\phi(x,\xi)}$ is $2\pi$-periodic for
every $\xi\in\Zn$, and that 
for all $|\alpha|\leq 2n+1$ and $|\beta|=1$ we have
\begin{equation}\label{l2a1}
  \left| \partial_x^\alpha a(x,k) \right| \leq
  C \textrm{ and }
  \left| \partial_x^\alpha\triangle_k^\beta \phi(x,k) \right| \leq
  C\;\; \textrm{ for all } (x,k)\in \Tn\times\Zn.
\end{equation}
Assume also that 
\begin{equation}\label{l2a2}
  \left| \nabla_x\phi(x,k)-\nabla_x\phi(x,l) \right| \geq
  C|k-l|\; \textrm{ for all } x\in\Tn, \; k,l\in\Zn.
\end{equation}
Then $T$ extends to a bounded operator 
on $L^{2}(\Tn)$.
\end{theorem}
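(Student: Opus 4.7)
The strategy is to expand $\|Tu\|_{L^2}^2$ as a bilinear form in the Fourier coefficients of $u$ and then invoke Schur's test, with the matrix entries estimated by integration by parts in $x$ (non-stationary phase on $\Tn$).

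First, by Parseval on $\Tn$, multiplying out $Tu\cdot\overline{Tu}$ and integrating gives
\[
\|Tu\|_{L^2(\Tn)}^{2}
=\sum_{k,l\in\Zn}\hat u(k)\,\overline{\hat u(l)}\,I(k,l),\qquad
I(k,l):=\int_\Tn e^{i(\phi(x,k)-\phi(x,l))}\,a(x,k)\,\overline{a(x,l)}\,\dslash x,
\]
where $\hat u(k)=(\FTT u)(k)$. Since $I(k,l)=\overline{I(l,k)}$, Schur's test on $\ell^2(\Zn)$ reduces the $L^2$-boundedness of $T$ to showing $\sup_{k\in\Zn}\sum_{l\in\Zn}|I(k,l)|<\infty$.

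Next, I would estimate $I(k,l)$ by non-stationary phase. Set $\psi(x):=\phi(x,k)-\phi(x,l)$ and $A(x):=a(x,k)\,\overline{a(x,l)}$. Although $\psi$ itself need not be $2\pi$-periodic, every derivative $\partial_x^\alpha\psi$ with $|\alpha|\geq 1$ is, because $\psi$ admits a telescoping decomposition into at most $n|k-l|$ first-order $k$-differences $\triangle_k^{e_j}\phi(x,\cdot)$, each of which is $2\pi$-periodic in $x$. Combined with hypothesis \eqref{l2a1}, this telescoping yields the uniform bound
\[
|\partial_x^\alpha\psi(x)|\leq C_\alpha\,|k-l|\qquad\text{for all }1\leq|\alpha|\leq 2n+1,
\]
and hypothesis \eqref{l2a2} gives the matching lower bound $|\nabla_x\psi(x)|\geq C|k-l|$. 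These estimates allow $N$-fold integration by parts (with no boundary terms since we are on $\Tn$) using the adjoint ${}^tL_x$ of
\[
L_x:=\frac{-i}{|\nabla_x\psi|^{2}}\sum_{j=1}^{n}(\partial_j\psi)\,\partial_{x_j},\qquad L_x\,e^{i\psi}=e^{i\psi}.
\]
A direct calculation using the bounds above shows that every $x$-derivative of the coefficient vector $\nabla_x\psi/|\nabla_x\psi|^{2}$ is dominated by $C/|k-l|$, so since $A$ has $N$ bounded $x$-derivatives by \eqref{l2a1},
\[
|I(k,l)|=\Bigl|\int_\Tn e^{i\psi(x)}\,({}^tL_x)^{N}A(x)\,\dslash x\Bigr|
\leq C_N\,\langle k-l\rangle^{-N}\qquad\text{for every }N\leq 2n+1.
\]
Taking $N=n+1$ makes $\sum_{l\in\Zn}\langle k-l\rangle^{-N}$ finite uniformly in $k$, and the Schur estimate is complete.

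The principal obstacle is the bookkeeping in the second step---verifying that the iterated coefficients produced by $({}^tL_x)^N$ really do decay like $|k-l|^{-N}$. This requires combining the telescoping-based upper bounds $|\partial_x^\alpha\psi|\lesssim|k-l|$, the lower bound $|\nabla_x\psi|\gtrsim|k-l|$, and the Leibniz and quotient rules applied to derivatives of $\nabla_x\psi/|\nabla_x\psi|^{2}$; once this combinatorial check is in place, the remaining manipulations of oscillatory integrals on $\Tn$ are routine.
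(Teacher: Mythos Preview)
Your argument is correct and takes a genuinely different route from the paper. The paper decomposes $S=\sum_{l\in\Zn}S_l$ with $S_lw(x)=\erm^{\irm\phi(x,l)}a(x,l)w(l)$, estimates both $\|S_kS_l^*\|_{L^2\to L^2}$ and $\|S_l^*S_k\|_{\ell^2\to\ell^2}$, and closes with a Cotlar--Stein almost-orthogonality lemma (stated there as Proposition~\ref{PROP:torus-Cotlar}). You instead expand $\|Tu\|_{L^2}^2$ directly as the quadratic form $\sum_{k,l}\hat u(k)\overline{\hat u(l)}I(k,l)$ and apply Schur's test to the Hermitian matrix $(I(k,l))$. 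The oscillatory-integral estimate at the heart of both arguments is the same: the paper's kernel $\widetilde{K_{lk}}(l,k)$ equals your $I(k,l)$, and the integration by parts with $L_x=-\irm|\nabla_x\psi|^{-2}\nabla_x\psi\cdot\nabla_x$ is identical. The practical difference is that Cotlar--Stein needs $\sum_j\gamma(j)<\infty$ with $\gamma(j)^2=\sup_{k}|I(k,k+j)|$, forcing $|I(k,l)|\lesssim\langle k-l\rangle^{-(2n+1)}$ and hence the full $2n+1$ integrations by parts, whereas your Schur bound only needs $\sum_l|I(k,l)|<\infty$, so $N=n+1$ integrations already suffice. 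Your approach is thus more elementary (no abstract lemma needed) and, as you note, consumes fewer of the available derivatives.

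One small wrinkle in your write-up: the individual differences $\triangle_k^{e_j}\phi(x,\cdot)$ need not themselves be $2\pi$-periodic in $x$ (only up to an additive constant in $2\pi\Z$), so the telescoping does not directly give periodicity of $\psi$. But this is harmless: since $\erm^{\irm\phi(x,\xi)}$ is periodic and $\phi$ is continuous, $\phi(\cdot+2\pi m,\xi)-\phi(\cdot,\xi)$ is constant in $x$, hence every $\partial_x^\alpha\psi$ with $|\alpha|\geq1$ is genuinely periodic, which is all you need for boundary terms to vanish. The telescoping is still exactly the right tool for the upper bound $|\partial_x^\alpha\psi|\leq C|k-l|$.
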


Note that  condition \eqref{l2a2} is a discrete version of
the usual local graph condition for Fourier integral operators,
necessary for the local $L^2$-boundedness. We also note that
conditions on the boundedness of higher order differences
of phase and amplitude would follow automatically from condition
\eqref{l2a1}. Therefore, this theorem relaxes assumptions
on the behaviour with respect to the dual variable,
compared, for example, with the corresponding result
by Ruzhansky and Sugimoto \cite{RS06a}.

\begin{proof} 
Since for $u:\Tn\to\C$ we have
$\|u\|_{L^2(\Tn)}=\|\FTT{u}\|_{\ell^2(\Zn)},$
it is enough to prove that operator
$$Sw(x)=\sum_{k\in\Zn} \erm^{\irm\phi(x,k)}\ a(x,k)\ w(k)$$
is bounded from $\ell^2(\Zn)$ to $L^2(\Tn)$. 
Let us define $S_l w(x)=\erm^{\irm\phi(x,l)}\ a(x,l)\ w(l),$
so that $S=\sum_{l\in\Zn} S_l$.
From the identity
$$
\p{w,S^*v}_{\ell^2(\Zn)}=\p{Sw,v}_{L^2(\Tn)}=
\int_\Tn \sum_{k\in\Zn} \erm^{\irm\phi(x,k)}\ a(x,k)\ w(k)\ 
\overline{v(x)} \dslash x
$$
we find that the adjoint $S^*$ to $S$ is given by
$$(S^*v)(k)=\int_{\Tn} \erm^{-\irm\phi(x,k)}\ 
\overline{a(x,k)}\ v(x) 
\dslash x$$
and so we also have
$$(S_l^*v)(m)=\delta_{lm}
\int_{\Tn} \erm^{-\irm\phi(x,m)}\ \overline{a(x,m)}\
v(x) \dslash x=
\delta_{lm} (S^*v)(l).$$
It follows that
\begin{eqnarray*}
  S_k S_l^* v(x)
  & = & \erm^{\irm\phi(x,k)}\ a(x,k)\ (S_l^* v)(k) \\
  & = & \delta_{lk} \int_\Tn \erm^{\irm\phi(x,k)}\ a(x,k)\
    \erm^{-\irm\phi(y,k)}\ \overline{a(y,k)}\ v(y) \dslash y \\
  & = & \int_\Tn K_{kl}(x,y)\ v(y) \dslash y,
\end{eqnarray*}
where $K_{kl}(x,y)=\delta_{kl} \erm^{\irm[\phi(x,k)-\phi(y,k)]}
\ a(x,k)\ \overline{a(y,k)}.$ From \eqref{l2a1} and compactness
of the torus it follows that
the kernel $K_{kl}$ is bounded and that
$\|S_k S_l^* v\|_{L^2(\Tn)}\leq C\delta_{kl}\|v\|_{L^2(\Tn)}.$
In particular, we can trivially conclude that for any $N\geq 0$ we
have the estimate
\begin{equation}\label{EQ:FSO-L2-Ikl}
 \n{S_k S_l^*}_{L^2(\Tn)\to L^2(\Tn)} \leq \frac{C_N}
 {1+|k-l|^N}.
\end{equation} 
On the other hand, we have
\begin{eqnarray*}
  (S_l^* S_k w)(m)
  & = & \delta_{lm} \int_{\Tn} \erm^{-\irm\phi(x,l)}\ 
     \overline{a(x,l)}\ (S_k w)(x) \dslash x \\
  & = & \delta_{lm} \int_{\Tn} \erm^{\irm[\phi(x,k)-\phi(x,l)]} 
    \ a(x,k)\ \overline{a(x,l)}\ w(k) \dslash x \\
  & = &
   \sum_{\mu\in\Zn} \widetilde{K_{lk}}(m,\mu)\ w(\mu),
\end{eqnarray*}
where $\widetilde{K_{lk}}(m,\mu)=
\delta_{lm}\delta_{k\mu}\int_{\Tn} \erm^{\irm[\phi(x,k)-\phi(x,l)]}
 \ a(x,k)\ \overline{a(x,l)} \dslash x.$
Now, if $k\not=l$, integrating by parts $(2n+1)$--times with operator
$\frac{1}{\irm}\frac{\nabla_x\phi(x,k)-\nabla_x\phi(x,l)}
{|\nabla_x\phi(x,k)-\nabla_x\phi(x,l)|^2}\cdot\nabla_x$
and using the periodicity of $a$ and $\nabla_x\phi$
(so there are no boundary terms), we get the estimate
\begin{equation}\label{EQ:FSO-L2-ker1}
|\widetilde{K_{lk}}(m,\mu)|\leq 
{\frac{C\ \delta_{lm}\ \delta_{k\mu}}{1+|k-l|^{2n+1}}},
\end{equation} 
where we also used that by the discrete Taylor theorem 
\ref{THM:torus-Taylor-thm} the
second condition in \eqref{l2a1} implies that
$$
  \left| \nabla_x\phi(x,k)-\nabla_x\phi(x,l) \right| \leq
  C|k-l|\; \textrm{ for all } x\in\Tn, \; k,l\in\Zn.
$$
Estimate \eqref{EQ:FSO-L2-ker1} implies 
$$
\sup_m \sum_\mu |\widetilde{K_{lk}}(m,\mu)|=
|\widetilde{K_{lk}}(l,k)| \leq 
{\frac{C}{1+|k-l|^{2n+1}}},
$$
and similarly for $\sup_\mu \sum_m$,
so that we have
\begin{equation}\label{EQ:FSO-L2-Ilk}
 \n{S_l^* S_k}_{\ell^2(\Zn)\to\ell^2(\Zn)} \leq \frac{C}
 {1+|k-l|^{2n+1}}.
\end{equation} 
These estimates for norms
$\n{S_k S_l^*}_{L^2(\Tn)\to L^2(\Tn)}$ and
$\n{S_l^* S_k}_{\ell^2(\Zn)\to\ell^2(\Zn)}$ in
\eqref{EQ:FSO-L2-Ikl} and \eqref{EQ:FSO-L2-Ilk},
respectively,
imply the theorem by a modification of Cotlar's lemma given in
Proposition \ref{PROP:torus-Cotlar}, which we use
with $\Hcal=\ell^2(\Zn)$ and $\Gcal=L^2(\Tn)$. 
\end{proof}

The following statement is a modification of the well-known
Cotlar's lemma taking into account the fact the operators
in our application act on functions on different Hilbert spaces.
The proof follows \cite[p. 280]{St93book} but there
is a difference in how we estimate operator norms because
we cannot immediately
replace operator $S$ by $S^* S$ in the estimates
since they act on functions on different spaces. We omit
the proof.

\begin{prop}[Variant of Cotlar's lemma]\label{PROP:torus-Cotlar}
Let $\Hcal,\Gcal$ be Hilbert spaces.
Assume that a family of bounded linear operators 
$\{S_j:\Hcal\to\Gcal\}_{j\in\Z^r}$
and positive constants $\{\gamma(j)\}_{j\in\Z^r}$ satisfy
\[
\n{S_l^*S_k}_{\Hcal\to\Hcal}\leq \left[\gamma(l-k)\right]^2,\qquad
\n{S_l S_k^*}_{\Gcal\to\Gcal}\leq \left[\gamma(l-k)\right]^2,\qquad
\]
and
$
A=\sum_{j\in\Z^r}\gamma(j)<\infty.
$
Then the operator
$
S=\sum_{j\in\Z^r}S_j
$
satisfies
$
\n{S}_{\Hcal\to\Gcal}\leq A.
$
\end{prop}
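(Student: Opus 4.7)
The plan is to adapt the classical Cotlar--Stein argument, taking care that $S_j$ and $S_j^*$ act between two different Hilbert spaces so that operator norms must be assigned to the correct space at each step. First, by a standard density/limit argument (passing to $S^{(F)} = \sum_{j \in F}S_j$ for finite $F \subset \Z^r$ and eventually letting $F$ exhaust $\Z^r$ by the uniform bound we will derive), we may assume the sum defining $S$ is finite of cardinality $M$. Set $\gamma_0 := \gamma(0)$ and observe that from $\n{S_j}^2_{\Hcal\to\Gcal} = \n{S_j^* S_j}_{\Hcal\to\Hcal} \leq \gamma(0)^2$ we get $\n{S_j}_{\Hcal\to\Gcal} \leq \gamma_0$ and correspondingly $\n{S_j^*}_{\Gcal\to\Hcal} \leq \gamma_0$.

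Next, use the self-adjoint identity $\n{S}^{2N}_{\Hcal\to\Gcal} = \n{S^*S}^{N}_{\Hcal\to\Hcal} = \n{(S^*S)^N}_{\Hcal\to\Hcal}$ and expand
\[
(S^*S)^N = \sum_{j_1,k_1,\ldots,j_N,k_N} S_{j_1}^* S_{k_1} S_{j_2}^* S_{k_2} \cdots S_{j_N}^* S_{k_N},
\]
where every intermediate operator maps consistently between $\Hcal$ and $\Gcal$. Each summand is estimated in two ways by different groupings of its $2N$ factors. Grouping as $(S_{j_1}^* S_{k_1})(S_{j_2}^* S_{k_2})\cdots(S_{j_N}^* S_{k_N})$ uses the first hypothesis on each pair, yielding the bound $\prod_{i=1}^N \gamma(j_i-k_i)^2$. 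Regrouping as $S_{j_1}^* (S_{k_1}S_{j_2}^*)\cdots(S_{k_{N-1}}S_{j_N}^*) S_{k_N}$ uses the second hypothesis on each middle pair together with $\n{S_{j_1}^*},\n{S_{k_N}}\leq \gamma_0$, giving $\gamma_0^2 \prod_{i=1}^{N-1}\gamma(k_i-j_{i+1})^2$. Taking the geometric mean of these two upper bounds yields
\[
\n{S_{j_1}^* S_{k_1} \cdots S_{j_N}^* S_{k_N}}_{\Hcal\to\Hcal} \leq \gamma_0 \prod_{i=1}^N \gamma(j_i-k_i) \prod_{i=1}^{N-1}\gamma(k_i-j_{i+1}).
\]

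Finally, sum over all the $2N$ indices. Peeling them off in order $k_N, j_N, k_{N-1}, \ldots, k_1$, each invocation of $\sum_j \gamma(\cdot-j)\leq A$ yields a factor $A$; this accounts for the $2N-1$ factors of $\gamma$ in the product. The remaining free index $j_1$ contributes only the cardinality $M$ of the index set, giving
\[
\n{(S^*S)^N}_{\Hcal\to\Hcal} \leq M\,\gamma_0\, A^{2N-1},
\]
hence $\n{S}_{\Hcal\to\Gcal} \leq (M\gamma_0 A^{-1})^{1/(2N)} A \to A$ as $N\to\infty$, which is the desired bound; the bound is independent of $M$ so the passage to the full infinite sum is automatic. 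The main (and really the only) subtlety is making sure that in each grouping the alternating structure between $\Hcal$ and $\Gcal$ is respected so that the cited hypotheses apply --- in particular, that the boundary factors in the second pairing are controlled by $\gamma_0$ rather than by the (unavailable) hypothesis on a single $S_j$ alone; this is exactly why both boundary factors $\n{S_{j_1}^*}$ and $\n{S_{k_N}}$ need the a priori bound derived in the first paragraph.
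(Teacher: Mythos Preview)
Your argument is correct: it is the standard Cotlar--Stein proof, carried out with the needed bookkeeping to keep track of whether each factor maps $\Hcal\to\Hcal$ or $\Gcal\to\Gcal$, and using $\|S_j\|\leq\gamma(0)$ for the two boundary factors in the second grouping. The paper itself does not give a proof of this proposition; it only remarks that the argument follows Stein's treatment with a minor modification for the two-space setting and then writes ``We omit the proof.'' Your write-up is precisely that modification, so there is nothing to compare beyond noting that you have supplied what the paper left implicit.
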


\begin{rem}[Sobolev spaces]\label{REM:torus-Sobolev}
 By using pseudo-differential operator 
 $\Lap_s(D)$ with toroidal
 symbol $(1+|\xi|^2)^{s/2}\in S^s_{1,0}(\Tn\times\Zn)$,
 $s\in\R$,
 we can define Sobolev spaces $H^s(\Tn)$ on the torus as spaces
 of all $f\in \Dcal'(\Tn)$ such that
 $\Lap_s(D)f\in L^2(\Tn)$. Then using Theorems
 \ref{THM:torus-TP}, \ref{THM:torus-PT}, and
 \ref{THM:torus-L2-FSOs}, we obtain the
 result on the boundedness of Fourier series operators
 on Sobolev spaces. Namely, let $T$ be a Fourier series
 operator as in Theorem \ref{THM:torus-PT}. Then $T$ is
 bounded on $H^s(\Tn)$ for all $s\in\R$. 
\end{rem}

\section{Applications to hyperbolic equations}
\label{SEC:torus-hyperbolic}

In this section we will briefly discuss how the analysis can
be applied to construct global parametrices for the hyperbolic
equations on the torus and how to embed certain problems in 
$\Rn$ into the torus. As already mentioned in the introduction,
the finite propagation speed of singularities in hyperbolic
equations allows one to cut-off the equation and the Cauchy data
for large $x$ for the local analysis of singularities of 
solutions for bounded times. Then the problem
can be embedded into $\Tn$ in order to apply the periodic
analysis developed in this paper. One of the advantages
of this procedure is that since phases and amplitudes now
are only evaluated at $\xi\in\Zn$ one can apply this also for
problems with low regularity in $\xi$, in particular
to problems for weakly hyperbolic
equations or systems with variable multiplicities.  
For example, if the
principal part has constant coefficients then the loss of
regularity occurs only in $\xi$ so techniques developed in
this paper can be applied. 

Let $a(X,D)$ be a pseudo-differential operator
with symbol $a=a(x,\xi)\in S^m(\Rn\times\Rn)$
(with some properties to be specified).
There is no difference in the subsequent
argument if $a=a(t,x,\xi)$ also depends on $t$.
For a function $u=u(t,x)$ of $t\in\R$ and $x\in\Rn$ we write
\begin{eqnarray*}
  a(X,D) u(t,x) & = & \int_{\Rn} a(x,\xi)
  \ \p{\FTR u}(t,\xi)\ {\rm e}^{{\rm i}x\cdot\xi}\ {\rm d}\xi \\
  & = & \int_{\Rn} \int_{\Rn} {\rm e}^{{\rm i}(x-y)\cdot\xi}
  \ a(x,\xi)\ u(t,y) \dslash y\ {\rm d}\xi.
\end{eqnarray*}
Let $u(t,\cdot)\in L^{1}(\Rn)$ ($0<t<t_0$) be a solution
to the hyperbolic problem
\be \label{he1}
  \begin{cases}
  {\rm i} \frac{\partial}{\partial t}u(t,x) = a(X,D) u(t,x), \\
  u(0,x) = f(x),
  \end{cases}
\ee
where $f\in L^{1}(\Rn)$ is compactly supported.

Assume now that $a(X,D)=a_1(X,D)+a_0(X,D)$ where $a_1(x,\xi)$
is $2\pi$-periodic and $a_0(x,\xi)$ is compactly supported in $x$
(assume even that $\supp a_0(\cdot,\xi)\subset [-\pi,\pi]^n$).
A simple example is a 
constant coefficient symbol $a_1(x,\xi)=a_1(\xi)$.
Let us also assume that $\supp f \subset [-\pi,\pi]^n$.

We will now describe a way to periodize problem \eqref{he1}.
According to Proposition \ref{p:p2}, we can replace \eqref{he1}
by 
\be \label{he2}
  \begin{cases}
  {\rm i} \frac{\partial}{\partial t}u(t,x) = 
  (a_1(x,D)+(\Pcal a_0)(X,D)) u(t,x)+Ru(t,x), \\
  u(x,0) = f(x),
  \end{cases}
\ee
where the symbol $a_1+\Pcal a_0$ is periodic and $R$ is a smoothing
operator. To study singularities of \eqref{he1},
it is sufficient to analyze the Cauchy problem
\be \label{he3}
  \begin{cases}
  {\rm i} \frac{\partial}{\partial t}v(t,x) = 
  (a_1(x,D)+(\Pcal a_0)(X,D)) v(t,x), \\
  v(x,0) = f(x)
  \end{cases}
\ee
since by Duhamel's formula we have ${\rm WF}(u-v)=\emptyset$.
This problem can be transferred to the torus. 
Let $w(t,x)=\Pcal v(\cdot,t)(x)$. 
By Theorem \ref{THM:torus-per-symb}
it solves the Cauchy problem
\be \label{he4}
  \begin{cases}
  {\rm i} \frac{\partial}{\partial t}w(t,x) = 
  (\widetilde{a_1}(x,D)+\widetilde{\Pcal a_0}(X,D)) w(t,x), \\
  w(x,0) = \Pcal f(x).
  \end{cases}
\ee
Now, if $a\in S^1$ is of the first order,
the calculus constructed in previous sections 
yields the solution in the form
$$
  w(t,x)\equiv T_t f(x)
  =\sum_{k\in\Zn} {\rm e}^{{\rm i}\phi(t,x,k)}\ b(t,x,k)
  \ \FTT\p{\Pcal f}(k),
$$
where $\phi(t,x,\xi)$ and $b(t,x,\xi)$ satisfy discrete 
analogues of the eikonal and transport equations.
Here we note that $\FTT\p{\Pcal f}(k)=\p{\FTR f}(k)$.
We also note that in principal the phase 
$\phi(t,x,k)$ is defined for discrete values of
$k\in\Zn$, so there is no issue of regularity, making
this representation potentially applicable to low
regularity problems and weakly hyperbolic equations.

Let us give a short example. If 
the symbol $a_1(x,\xi)=a_1(\xi)$ has constant coefficients
and belongs to $S^1$,
and $a_0$ belongs to $S^0$, we
can find that the phase is given by
$\phi(t,x,k)=x\cdot k+t a_1(k).$ In particular,
$\nabla_x\phi(x,k)=k$.
Applying $a(X,D)$ to $w(t,x)=T_t f(x)$ and using the
composition formula from Theorem \ref{THM:torus-PT}
we obtain
$$
  a(X,D)T_t f(x) = \sum_{k\in\Zn} \int_{\Rn}
  {\rm e}^{{\rm i}((x-z)\cdot k +ta_1(k))}
  \ c(t,x,k)\ f(z)
  \dslash z,
$$
where
\begin{equation}\label{EQ:torus-hyperb-as}
 c(t,x,k) \sim \sum_{\alpha\geq 0}
  \frac{{\rm i}^{-|\alpha|}}{\alpha!}
    \ \left.\partial_\xi^\alpha a(x,\xi)\right|_{\xi=k}
    \ \partial_x^\alpha b(t,x,k),
\end{equation} 
since function $\Psi$ in Theorem \ref{THM:torus-PT}
vanishes. From this we can find amplitude $b$ from the
discrete version of the transport equations, details of
which we omit here.
Finally, we note that we can also have an asymptotic expansion
for the amplitude $b$ in \eqref{EQ:torus-hyperb-as}
in terms of discrete differences
$\triangle_\xi^\alpha$ and corresponding derivatives
$\partial_x^{(\alpha)}$ instead of
derivatives $\partial_\xi^\alpha$ and $\partial_x^\alpha$,
respectively, if we use Remark \ref{REM:torus-PT} instead
of Theorem \ref{THM:torus-PT}.

\end{document}